\documentclass{article}

\usepackage{amsmath}
\usepackage{amssymb}
\usepackage{amsthm}
\usepackage[latin1]{inputenc}
\usepackage[english]{babel}
\usepackage[T1]{fontenc}
\usepackage{tikz}
\usepackage{xcolor}
\usepackage{comment}

\usepackage{latexsym,color,graphicx,enumerate}
\usepackage{hyperref}

\DeclareMathOperator*{\Res}{Res}

\newtheorem{thm}{Theorem}[section]
\newtheorem{prop}[thm]{Proposition}
\newtheorem{coro}[thm]{Corollary}
\newtheorem{lemma}[thm]{Lemma}
\newtheorem{rem}[thm]{Remark}

\newtheorem{nota}{Notation}[section]

\makeatletter

\newcommand*{\plim}[1][]{%
	\if\relax\detokenize{#1}\relax
	\def\next{\qopname\relax m{lim}}%
	\else
	\def\next{\qopname\newmcodes@ m{#1-lim}}%
	\fi
	\next
}

\newcommand*{\psum}[1][]{%
	\DOTSB
	\if\relax\detokenize{#1}\relax\else
	\operatorname{#1-}\mkern-\thinmuskip
	\fi
	\sum@\slimits@
}

\makeatother

\newcommand{\R}{\mathbb{R}}             
\newcommand{\N}{\mathbb{N}}             
\newcommand{\C}{\mathbb{C}}             

\newcommand{\B}{\mathcal{B}}            

\newcommand {\clb}{\color{blue}}
\newcommand {\clr} {\color{red}}
\newcommand {\clm} {\color{magenta}}

\numberwithin{equation}{section}

\title{Inverse Spectral Analysis of Singular Radial AKNS Operators}

\author{Damien Gobin, Beno\^it Gr\'ebert, Bernard Helffer and Fran{\c{c}}ois Nicoleau}

\date{}

\begin{document}
	
	\maketitle
	
\begin{abstract}
	We study an inverse spectral problem for singular AKNS operators based on
	spectral data associated with two distinct values of the effective  angular momentum
	parameter $\kappa\,$.
	Our main focus is the local inverse problem near the zero potential.
	For the pairs $(\kappa_1,\kappa_2)=(0,1)$, $(1,2)$ and $(0,3)\,$, we establish local
	uniqueness.
	For $(0,2)\,$, we prove that the Fr\'echet differential of the spectral map
	at the origin is injective, while the question whether its range is closed
	remains open.
\end{abstract}

	\section{Introduction}

	Inverse spectral theory for one-dimensional singular differential operators
	arises naturally in the analysis of radially symmetric quantum systems.
	The classical example is the radial Schr\"odinger operator, obtained from the
	three-dimensional Schr\"odinger equation after separation of variables in
	spherical coordinates. For a real-valued square-integrable potential
	$q \in L^2(0,1)\,$, the radial equation takes the form
	\begin{equation}\label{eq:Schr}
		H_\ell(q)u
		:=
		-\frac{d^2u}{dx^2}
		+\frac{\ell(\ell+1)}{x^2}u
		+ q(x)u
		= \lambda u\,,
		\qquad x\in(0,1)\,,\quad \ell\in\mathbb N\,.
	\end{equation}
	The regularity condition is $u(x)=O(x^{\ell+1})$ as $x\to0 \,$, together with the
	Dirichlet boundary condition $u(1)=0\,$.
	For each  angular momentum~$\ell$, this defines a self-adjoint operator on
	$L^2(0,1)$ with discrete, simple, real spectrum.

	\vspace{0.2cm}\noindent
	The inverse spectral problem consists in determining the potential $q(x)$ from
	its spectral data. A classical result due to  P\"oschel--Trubowitz \cite{PT}, Carlson \cite{Carlson97}, Guillot--Ralston \cite{GR} and 
	Zhornitskaya--Serov \cite{zs} asserts that the Dirichlet
	spectrum together with suitable norming constants forms a real-analytic
	coordinate system on $L^2(0,1)$ for each fixed~$\ell\,$.
	Recently, in the context of radial Schr\"odinger operators with distinct angular
	momenta, we proved in~\cite{GGHN2025} that the potential is uniquely determined
	by the Dirichlet spectra corresponding to infinitely many values of~$\ell$
	satisfying a M\"untz-type condition, and that in a neighborhood of the zero
	potential, knowing two spectra (for the pairs $(\ell_1,\ell_2)=(0,1),\ (1,2)$ or $(0,3)$)
	already implies uniqueness. These results rely on the explicit structure of the eigenfunctions in terms of Bessel functions, specifically of the form
	\begin{equation}\label{eq:SchrEigen}
		u_{\ell,n}(x)
		= c_{\ell,n}\, x^{1/2} J_\nu(j_{\nu,n} x)\,,
		\qquad
		\nu=\ell+\tfrac12\,,
	\end{equation}
	together with delicate completeness properties of the squared eigenfunctions,
	following earlier works of Rundell and Sacks \cite{RuSa01}.
	
\vspace{0.2cm}\noindent
The aim of the present work is to investigate the analog of this spectral problem for singular radial AKNS operators.
The mathematical model considered here arises from certain physical models,
whose derivation is briefly outlined in the appendix.
After separation of variables, one is led to a family of singular radial AKNS operators parameterized by what we call an \emph{effective angular momentum parameter} $\kappa \in \mathbb{Z}$. We emphasize that the parameter $\kappa$ does not, in general, correspond to a genuine angular momentum, in contrast with the Schr\"odinger case. In the 3D Dirac framework, $-\kappa$ arises as an eigenvalue of the spin--orbit operator $K$, whereas in the 2D model it can be interpreted as an effective angular momentum (see (7.97) in \cite{Th92} and Appendix A of the present paper).

\vspace{0.2cm}\noindent
The associated singular AKNS operator is
\begin{equation}\label{eq:AKNS}
	H_\kappa(V)\,Z = H_\kappa(p,q) \, Z
	=
	\begin{pmatrix}
		0 & -1\\[0.2em]
		1 & 0
	\end{pmatrix} Z'
	+
	\begin{pmatrix}
		0 & -\dfrac{\kappa}{x}\\[0.4em]
		-\dfrac{\kappa}{x} & 0
	\end{pmatrix} Z
	+
	V(x)\,Z\,,
	\qquad
	Z=(Z_1,Z_2)^{\mathsf T}\,.
\end{equation}
Here the potential matrix $V$ is given by
\begin{equation}\label{eq:V}
	V(x)
	=
	\begin{pmatrix}
		-q(x) & p(x)\\[0.2em]
		p(x) & q(x)
	\end{pmatrix}\,,
	\qquad
	p,q \in L_\R^2(0,1)\,.
\end{equation}
We impose the following boundary conditions. Let $(\theta_1,\theta_2)\in\mathbb{R}^2\,$.
\begin{itemize}
	\item When $\kappa=0$\,,
	\begin{equation}\label{eq:MITa}
		Z(0)\cdot u_{\theta_1} = 0\,,
		\qquad
		Z(1)\cdot u_{\theta_2} = 0\,,
		\qquad
		u_{\theta_j}=\binom{\sin\theta_j}{\cos\theta_j}\,.
	\end{equation}
	
	\item When $\kappa\neq 0\,$,
	\begin{equation}\label{eq:MITb}
		Z(1)\cdot u_{\theta_2} = 0 \,.
	\end{equation}
\end{itemize}

\medskip\noindent
The AKNS system enjoys symmetries associated with the Pauli matrices
\begin{equation}\label{paulimatrices}
\sigma_1=\begin{pmatrix}0&1\\[0.2em]1&0\end{pmatrix},\quad
\sigma_2=\begin{pmatrix}0&-i\\[0.2em]i&0\end{pmatrix},\quad
\sigma_3=\begin{pmatrix}1&0\\[0.2em]0&-1\end{pmatrix}.
\end{equation}
A direct computation shows that
\begin{align}
\sigma_1 H_\kappa(p,q)\sigma_1 &= -\,H_{-\kappa}(-p,q), \label{eq:sym-sigma1}\\
\sigma_2 H_\kappa(p,q)\sigma_2 &= \,H_{-\kappa}(-p,-q), \label{eq:sym-sigma2}\\
\sigma_3 H_\kappa(p,q)\sigma_3 &= -\,H_{\kappa}(p,-q). \label{eq:sym-sigma3}
\end{align}
In particular, if $Z$ solves $H_\kappa(p,q)Z=\lambda Z$, then
$\sigma_1 Z$, $\sigma_2 Z$ and $\sigma_3 Z$ solve the corresponding
transformed systems with the same or opposite spectral parameter
according to \eqref{eq:sym-sigma1}--\eqref{eq:sym-sigma3}.

\medskip\noindent
In the case $p=0$, the $\sigma_1$-symmetry shows that the knowledge of
the spectra corresponding to $\kappa$ and $-\kappa$ (with the same
boundary condition $\theta_2=0$) is equivalent, up to a reindexation,
to the knowledge of the two spectra associated with $\theta_2=0$ and
$\theta_2=\pi/2$ for $H_\kappa$. Therefore, provided that the resulting sequences satisfy the technical 
interlacing property required in Theorem~1.1 of \cite{AHM}, the
potential $q$ is uniquely determined.

\medskip
\noindent
For these reasons, and other technical difficulties\footnote{It would actually be interesting to treat the general case, but, except for the case $\theta_2=\frac \pi 2$,  this would indeed be technically more involved.},  we restrict ourselves, in the present paper,  to the case $$ \kappa \geq 0 \mbox{ and } \theta_2=0 \,.$$
In the case $\kappa=0\,$, we also set $\theta_1=0\,$. As explained in the appendix, $\theta_2=0$  corresponds in the motivating Dirac radial model to the so-called Zig--Zag condition, while the MIT bag condition corresponds to $\theta_2=\pm \frac{\pi}{4}$\,.

\medskip
\noindent
The domain of the operator is then defined as follows:
\begin{equation}\label{eq:Domaina}
	D(H_0)
	=
	\Bigl\{
	Z=(Z_1,Z_2)\in L^2(0,1)^2 \;:\;
	H_0 Z \in L^2(0,1)^2\,,\;
	Z_2(0)=0\,,\;
	Z_2(1)=0
	\Bigr\}\,.
\end{equation}
For $\kappa >0$, we set
\begin{equation}\label{eq:Domainb}
	D(H_\kappa)
	=
	\Bigl\{
	Z=(Z_1,Z_2)\in L^2(0,1)^2 \;:\;
	H_\kappa Z \in L^2(0,1)^2\,,\;
	Z_2(1)=0
	\Bigr\}\,.
\end{equation}
As shown in~\cite{Serier2006}, this realizes a self-adjoint operator with purely
discrete and {\it simple} spectrum which can be written as a doubly infinite sequence 
\begin{equation}\label{eq:EigenSeq}
	\{\lambda_{\kappa,n}(p,q)\}_{n\in\mathbb Z}\,,
\end{equation}
ordered as
\begin{equation}\label{eq:Order}
	\cdots < \lambda_{\kappa,-2}(p,q)
	< \lambda_{\kappa,-1}(p,q)
	< \lambda_{\kappa,0}(p,q)
	< \lambda_{\kappa,1}(p,q)
	< \lambda_{\kappa,2}(p,q)
	< \cdots \, .
\end{equation}

\medskip\noindent
The labelling is uniquely determined by the asymptotic behavior (see \cite{Serier2006}, Theorem 3.1):
\begin{equation}\label{eq:AsymptAKNS}
	\lambda_{\kappa,n}(p,q)
	=
	\Bigl(n+\operatorname{sgn}(n)\,\tfrac{\kappa}{2}\Bigr)\,\pi
	+ \ell^2(n)\,,
	\mbox{ as }  |n|\to\infty\,,
\end{equation}
which governs both ends of the sequence. Here the notation
$\alpha_n=\beta_n+\ell^2(n)\,$, $n\in\mathbb{Z}\,$, means that the sequence
$(\alpha_n-\beta_n)_{n\in\mathbb{Z}}$ belongs to $\ell^2(\mathbb{Z})\,$.

\medskip\noindent
In addition to the eigenvalues, Serier introduced suitable norming
constants which, together with the spectrum, form a complete system of spectral
coordinates. More precisely, the combined data (eigenvalues and norming
constants) provide a locally stable parameterization of the potential and yield a
Borg--Levinson type uniqueness result (see~\cite{Serier2006}).

\medskip\noindent
However, these classical results rely crucially on the availability of norming
constants. From the physical and inverse point of view, such quantities are in
general \emph{not observable}. This naturally leads to a different and more
challenging question: can one determine the potential uniquely from spectral data
alone, \emph{without} any norming constants?

\medskip\noindent
As in the corresponding inverse problem for the radial Schr\"odinger operator,
the spectral data associated with a single effective angular momentum are not sufficient
to ensure uniqueness. This naturally leads to combining information from
\emph{at least two distinct effective angular momenta} $\kappa\,$.  More precisely, we consider the  spectra corresponding to
two distinct effective angular momenta $\kappa_1 \neq \kappa_2$ and study whether this purely
spectral information, without any norming constants, determines the potential.

\medskip\noindent
We now state our first main result,  which gives several cases where two spectra are sufficient
to recover the potential locally near the trivial configuration.

\begin{thm}[Local uniqueness for the pairs $(0,1)$, $(1,2)$ and $(0,3)$]
	\label{thm:main-AKNS-01}
	Let $(\kappa_1,\kappa_2)=(0,1)$, $(1,2)$ or $(0,3)\,$. Then the knowledge of the  spectra
	associated with the effective angular momenta $\kappa_1$ and $\kappa_2$ uniquely determines
	the potential $V=(p,q)\in L^2(0,1)\times L^2(0,1)$ in a neighborhood of
	the zero potential $V_0=0\,$.	
\end{thm}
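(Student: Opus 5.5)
We follow the strategy of \cite{GGHN2025} and use the local inverse function theorem: we show that the spectral map is an injective Fredholm-type immersion at the origin. Let
\[
\Lambda:(p,q)\mapsto \Bigl( \bigl(\lambda_{\kappa_1,n}(p,q)-\lambda_{\kappa_1,n}(0,0)\bigr)_{n\in\mathbb Z},\ \bigl(\lambda_{\kappa_2,n}(p,q)-\lambda_{\kappa_2,n}(0,0)\bigr)_{n\in\mathbb Z}\Bigr),
\]
viewed as a map from $L^2_\R(0,1)^2$ into $\ell^2(\mathbb Z)^2$. By \eqref{eq:AsymptAKNS} and the analytic dependence of simple eigenvalues on $V$ (\cite{Serier2006}), $\Lambda$ is real analytic near $0$. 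Local uniqueness then follows once we show that $d\Lambda(0)$ is injective with closed range, since an injective bounded operator with closed range is bounded below, so $\Lambda$ is locally injective.

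\textbf{Step 1: compute the differential.} We first compute the unperturbed eigenfunctions. With $V=0$, the system $H_\kappa Z=\lambda Z$ reduces to Bessel equations. The regular solution is
\[
Z=\sqrt{x}\,\bigl(J_{\kappa-1/2}(\lambda x),\,J_{\kappa+1/2}(\lambda x)\bigr)
\]
up to signs and conventions, and the condition $Z_2(1)=0$ gives $\lambda_{\kappa,n}=\pm j_{\kappa+1/2,|n|}$. For half-integer orders these are spherical Bessel functions, hence elementary trigonometric/rational expressions in $x$. Next, the Hellmann--Feynman formula gives
\[
d\lambda_{\kappa,n}(0)(p,q)=\int_0^1\bigl(-q(Z_1^2-Z_2^2)+2pZ_1Z_2\bigr)\,dx
\]
for normalized eigenfunctions. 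Hence injectivity of $d\Lambda(0)$ amounts to completeness, in $L^2(0,1)^2$, of the family of vectors
\[
\bigl(2Z_1Z_2,\;Z_2^2-Z_1^2\bigr)_{n\in\mathbb Z}
\]
taken for both $\kappa_1$ and $\kappa_2$. The $\sigma_3$-symmetry \eqref{eq:sym-sigma3} relates $n$ and $-n$: $\lambda\mapsto-\lambda$ flips the sign of one component of $Z$. Therefore the $p$-components are odd in $n$ and the $q$-components are even, which decouples the family into a $p$-part and a $q$-part. Concretely, for $\kappa=0$ the products are $\sin(2\lambda x)$ and $\cos(2\lambda x)$ with $\lambda=n\pi$. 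For $\kappa\ge1$ they are $\sin 2\lambda x$ and $\cos 2\lambda x$ corrected by terms in $1/(\lambda x)$, $1/(\lambda x)^2$, and so on.

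\textbf{Step 2: Riesz basis / Fredholm argument.} Writing the products as trigonometric functions plus perturbations, we show that $d\Lambda(0)$ equals an explicit invertible (Fourier-type) operator plus a compact one. The frequencies $2\lambda_{\kappa,n}$ are asymptotically $(2n+\kappa)\pi$. For $\kappa_1,\kappa_2$ of opposite parity, as in $(0,1)$ and $(1,2)$, the union of the two families interlaces as $\{m\pi\}$, giving the full Fourier basis of $L^2(0,1)$ for each of $p$ and $q$. For $(0,3)$ both $\kappa$ are odd/even mixed in the same way, so the combined frequency set is again asymptotically $\{m\pi\}$ with one index shift to account for. For $(0,2)$, by contrast, both sets asymptotically give only even multiples, and there is no such structure. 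By Kadec/Bari-type stability, the family is then a Riesz sequence modulo finitely many elements, so the range of $d\Lambda(0)$ is closed and its kernel is finite dimensional.

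\textbf{Step 3: trivial kernel — the main obstacle.} It remains to show that no nonzero $(p,q)$ is orthogonal to all products. We would proceed as in \cite{GGHN2025} via a transmutation/Hankel-type representation of the products. For instance,
\[
x\,J_{\kappa\pm1/2}(\lambda x)^2=\int_0^x K_\kappa(x,t)\cos(2\lambda t)\,dt
\quad\text{(or with $\sin$)},
\]
which expresses the orthogonality conditions as vanishing of $\int_0^1 f(t)\cos(2\lambda t)\,dt$ at the zeros of $J_{\kappa+1/2}$, for some $f=T_\kappa p$. We then consider the entire function
\[
F(\lambda)=\int_0^1 f(t)\,e^{2i\lambda t}\,dt,
\]
which has exponential type at most $2$ and vanishes on the union of the two zero sets. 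A counting argument (Jensen / Paley--Wiener density) forces $F$ to be a multiple of the product
\[
\lambda^{a}\,J_{\kappa_1+1/2}(\lambda)\,J_{\kappa_2+1/2}(\lambda)\,\lambda^{-b}
\]
for suitable powers $a,b$. Growth on the real axis then forces the constant to vanish, except for a possible finite-dimensional set of polynomial corrections. Ruling these out case by case for $(0,1)$, $(1,2)$ and $(0,3)$, by checking a small explicit linear system (for example, moments of $p$ and $q$ against $x^k$), is where we expect the real difficulty. This is exactly the step that fails to close in the $(0,2)$ case.
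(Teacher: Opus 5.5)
Your overall frame (injective differential with closed range, hence local injectivity) is the paper's. However, both substantive steps have genuine gaps.

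\textbf{Step 2 (closed range).} The Bessel-product system is \emph{not} a compact perturbation of a trigonometric system. Already for $\kappa=1$ one has $\Phi_{1,1}(\lambda x)=\sin(2\lambda x)-\frac{1-\cos(2\lambda x)}{\lambda x}$. The correction term is exactly the sine transform of the Hardy-type operator $p\mapsto -2\int_x^1 p(t)\,t^{-1}\,dt$; indeed $\int_0^1\Phi_{1,1}(\lambda t)p(t)\,dt=\int_0^1\sin(2\lambda t)\,S_{0,1}[p](t)\,dt$. This operator commutes with the unitary dilations $p\mapsto\delta^{-1/2}p(\cdot/\delta)$, $0<\delta\le1$, so it is bounded but not compact. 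Equivalently, the correction kernels have $L^2$ norms $\asymp\lambda_n^{-1/2}$, which are not square summable, so Bari-type stability does not apply. Your terms in $1/(\lambda x)$ are simply not small near $x=0$.

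The correct reduction goes through the transformation operators $T_\kappa$, which turn the Bessel kernels \emph{exactly} into $\sin(2\lambda x)$ and $\cos(2\lambda x)$. Only the frequency shift $2j_{\nu,n}-(2n+\kappa)\pi=O(1/n)$ is then a compact perturbation. But the resulting model operator for $(0,1)$ is $v\mapsto\bigl((\langle v,\sin(2n\pi x)\rangle)_{n\ge1},(\langle S_{0,1}[v],\sin((2n+1)\pi x)\rangle)_{n\ge1}\bigr)$. The two families act on \emph{different} functions, $T_{\kappa_1}v$ and $T_{\kappa_2}v$, so the fact that the frequencies interlace into $\{m\pi\}$ gives no Riesz basis for $v$. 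What this model controls is $\|v_{\mathrm{odd}}\|$ and $\|(S_{0,1}[v])_{\mathrm{even}}\|$, modulo one mode. Closed range then needs the extra fact that $P\circ S_{0,1}$ is bounded below on even functions. The paper gets this from the explicit left inverse of Lemma~\ref{lem:left-inverse-PS01} together with Hardy's inequality, and concludes by semi-Fredholm stability. Your proposal has no substitute for this step.

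\textbf{Step 3 (trivial kernel).} This step rests on the same conflation. For the $p$-part, the conditions say two separate things:
\begin{itemize}
\item $\int_0^1 f_1(t)\sin(2\lambda t)\,dt$ vanishes at $\lambda=j_{\kappa_1+1/2,n}$;
\item $\int_0^1 f_2(t)\sin(2\lambda t)\,dt$ vanishes at $\lambda=j_{\kappa_2+1/2,n}$;
\end{itemize}
here $f_i=T^1_{\kappa_i}[p]$ are two distinct functions. So there is no single entire function of type $2$ vanishing on the union of the zero sets. Each zero set alone has only the density of a type-$1$ function, so the Jensen/Paley--Wiener count has nothing to act on.

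The paper couples the two conditions quite differently:
\begin{itemize}
\item The modified Kneser--Sommerfeld identities of Proposition~\ref{KSgen} upgrade vanishing at the $j_{\nu,n}$ into identities valid for all $z\in\C^*$, involving both $J_\nu$ and $Y_\nu$ (Proposition~\ref{prop:prelim-identities}).
\item Via $T_\kappa$ and the spherical Bessel formulas, these become the parity statements of Theorem~\ref{mainthm}: $A_\kappa(D)T^1_\kappa[v_1]$ is even and $A_\kappa(D)T^2_\kappa[v_2]$ is odd about $x=\tfrac12$.
\item Intersecting the statements for $\kappa_1$ and $\kappa_2$ gives singular linear ODEs whose Cauchy data at $x=\tfrac12$ are fixed up to at most two parameters.
\item Nonzero solutions are then excluded because they are not in $L^2$ or violate the zero-mode moment condition. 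This is done explicitly for $(0,1)$ and $(1,2)$, and numerically for $(0,3)$.
\end{itemize}

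You leave the kernel step open yourself. Your remark that this is the step that fails for $(0,2)$ is also inaccurate: the paper proves injectivity for $(0,2)$, and what remains open there is the closed range.
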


\medskip\noindent
The proof of Theorem~\ref{thm:main-AKNS-01} relies on the analysis of the
Fr\'echet differential of the associated spectral map at the zero potential.
We show that this differential is injective with closed range, which yields the
desired local uniqueness result.

\medskip\noindent
We now briefly introduce this spectral map. Let
$\widetilde{\lambda}_{\kappa,n}(p,q)$ denote the renormalized eigenvalues,
implicitly defined by the asymptotic formula \eqref{eq:AsymptAKNS} and explicitly
given by
\[
\widetilde{\lambda}_{\kappa,n}(p,q)
=
\lambda_{\kappa,n}(p,q)
-
\Bigl(n + \operatorname{sgn}(n)\,\tfrac{\kappa}{2}\Bigr)\,\pi\,.
\]
We choose two distinct  effective angular momenta
$\kappa_1 \neq \kappa_2 \,$, which are fixed  integers and we consider the
associated spectral map
\[
\mathcal{S}_{\kappa_1,\kappa_2} \,:\,
L^2(0,1) \times L^2(0,1)
\longrightarrow
\ell_{\mathbb{R}}^2(\mathbb{Z}) \times \ell_{\mathbb{R}}^2(\mathbb{Z})\,,
\]
defined by
\begin{equation}\label{eq:spectral-map-two-ell}
	\mathcal{S}_{\kappa_1,\kappa_2}(p,q)
	=
	\left(
	\bigl(\widetilde{\lambda}_{\kappa_1,n}(p,q)\bigr)_{n\in\mathbb{Z}},
	\;
	\bigl(\widetilde{\lambda}_{\kappa_2,n}(p,q)\bigr)_{n\in\mathbb{Z}}
	\right)\,.
\end{equation}

\medskip\noindent
We now state the second main result of this paper,  which discusses the injectivity 
of the Fr\'echet differential of the spectral map at the zero potential for three pairs of 
 effective angular momenta.

\begin{thm}[Behavior of the differential of the spectral map]
	\label{thm:differential-behavior}
	Let $\kappa_1 \neq \kappa_2$ be two distinct  integers and consider the
	spectral map
	\[
	\mathcal{S}_{\kappa_1,\kappa_2} \,:\,
	L^2(0,1) \times L^2(0,1)
	\longrightarrow
	\ell_{\mathbb{R}}^2(\mathbb{Z}) \times \ell_{\mathbb{R}}^2(\mathbb{Z})\,.
	\]
	Then, at the zero potential $V=0\,$, the Fr\'echet differential of
	$\mathcal{S}_{\kappa_1,\kappa_2}$ satisfies:
	
	\begin{itemize}
		\item For $(\kappa_1,\kappa_2)=(0,1)\,$, $(1,2)$ or $(0,3)\,$, the differential is injective and has closed range.
		
		\item For $(\kappa_1,\kappa_2)=(0,2)\,$, the differential is injective.
		
	\end{itemize}
\end{thm}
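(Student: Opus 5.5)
We plan to compute the differential explicitly via first-order perturbation theory of the simple eigenvalues at $V=0$. For $V=0$ the system $H_\kappa(0)Z=\lambda Z$ is solved by Bessel functions: up to normalisation,
\[
Z_{\kappa,n}(x)=c_{\kappa,n}\,\sqrt{x}\,\begin{pmatrix} J_{\kappa-1/2}(\lambda_{\kappa,n}x)\\ \pm J_{\kappa+1/2}(\lambda_{\kappa,n}x)\end{pmatrix},
\]
with $\lambda_{\kappa,n}$ determined by the boundary condition $J_{\kappa+1/2}(\lambda)=0$. Up to sign conventions, these are $\pm j_{\kappa+1/2,m}$ and, for $\kappa=0$, also $\lambda=0$. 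The spherical Bessel structure gives elementary trigonometric expressions for half-integer orders. Simplicity of the spectrum gives the Hellmann--Feynman formula
\[
d\widetilde\lambda_{\kappa,n}(0)(p,q)=\int_0^1\bigl[p\,2Z_1Z_2+q\,(Z_2^2-Z_1^2)\bigr]dx ,
\]
with $\|Z_{\kappa,n}\|=1$. Using $J_{-1/2}(z)=\sqrt{2/\pi z}\cos z$, $J_{1/2}(z)=\sqrt{2/\pi z}\sin z$, and the recursions for $J_{3/2},J_{5/2},J_{7/2}$, we will write $2Z_1Z_2$ and $Z_2^2-Z_1^2$ explicitly. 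For $\kappa=0$ they become $\sin(2n\pi x)$ and $-\cos(2n\pi x)$ for $n\in\mathbb Z$. For $\kappa\ge1$ they become $\mp\sin(2\lambda x)$ and $\cos(2\lambda x)$, plus lower-order terms in powers of $1/(\lambda x)$ that are explicit rational–trigonometric functions.

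Injectivity then reduces to a completeness statement. Because $\lambda_{\kappa,-n}=-\lambda_{\kappa,n}$, the functionals for $\pm n$ separate into one family tested against $p$ and one against $q$. For $\kappa=0$, vanishing of the differential gives that $p$ is orthogonal to all $\sin(2n\pi x)$ and $q$ to all $\cos(2n\pi x)$, including the constant from $\lambda_0=0$. Thus $\kappa=0$ alone already forces half the Fourier data on $(0,1)$ to vanish, and $\kappa_2$ must supply the missing half. The natural move is the substitution $x\mapsto 1-x$, or an even/odd decomposition about $x=1/2$. This converts the $\kappa=0$ conditions into statements about the components of $p,q$ with one parity. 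The $\kappa_2$ family, via its asymptotic form $\sin(2\lambda_{\kappa_2,n}x)$ with $\lambda_{\kappa_2,n}\approx(n+\kappa_2/2)\pi$, then tests the other parity. For $\kappa_2$ odd this is a half-integer frequency shift, which gives a Riesz-type basis on the complementary part. For the pairs $(1,2)$ and $(0,2)$ both index sets have the same parity class, so the shift argument does not apply and the correction terms must be used.

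For closed range we will show that the differential is a compact perturbation of an isomorphism, or equivalently a Fredholm operator of index $0$. This follows by writing each family of squared eigenfunctions as an explicit trigonometric system plus terms of size $O(1/(\lambda x))$ whose Gram contributions are Hilbert–Schmidt, after handling the singularity at $x=0$ by the explicit structure. Injectivity plus Fredholm index zero then gives closed range, and the first bullet of the theorem follows.

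The main obstacle is the injectivity step, specifically ruling out a finite-dimensional kernel. Given Fredholmness, we will do this by converting orthogonality to all the Bessel products into the vanishing of an entire function of exponential type. Concretely, we form the transform
\[
F(\lambda)=\int_0^1 \bigl(p\,2Z_1Z_2+q(Z_2^2-Z_1^2)\bigr)(\lambda,x)\,dx ,
\]
which vanishes on both spectra. We then compare with the known product $\lambda J_{\kappa_1+1/2}(\lambda)J_{\kappa_2+1/2}(\lambda)$ via Hadamard factorisation, in the spirit of \cite{GGHN2025} and Rundell–Sacks. Growth estimates force the quotient to be a polynomial, and the explicit low-order Bessel formulas then force it to vanish. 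This is exactly where $(0,2)$ should differ: for $(0,2)$ the quotient argument should still yield injectivity, but the leading trigonometric parts of the two families fail to be complementary. In that case we expect the Fredholm argument to break down, so only the second bullet, injectivity, will be claimed.
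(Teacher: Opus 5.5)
Your plan has two genuine gaps, one in the closed-range step and one in the injectivity step.

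\textbf{Closed range.} You claim that, after extracting $\mp\sin(2\lambda x)$ and $\cos(2\lambda x)$, the $O(1/(\lambda x))$ remainders give a Hilbert--Schmidt perturbation. This is false, and the singularity at $x=0$ is exactly the reason. Already for $\kappa=1$ one has $\Phi_{1,1}(z)=\sin 2z-\frac{1-\cos 2z}{z}$. The remainder functional is then $v\mapsto -2\int_0^1\sin(2\lambda x)\bigl(\int_x^1 t^{-1}v(t)\,dt\bigr)dx$. These are the sine coefficients of a Hardy-type operator that is bounded but scale invariant at $0$, hence not compact. Indeed, the Hilbert--Schmidt sum behaves like $\sum_n 1/\lambda_n=\infty$, and $v_\epsilon=\epsilon^{-1/2}\mathbf{1}_{[\epsilon,2\epsilon]}\rightharpoonup 0$ has images of norm bounded below. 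So the differential is not a compact perturbation of a trigonometric system acting on $v$ itself.

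The paper instead uses Serier's transformation operators to write exactly $A_{\kappa,n}(v_1)=-\frac{2}{\pi}\int_0^1\sin(2j_{\nu,n}x)\,T^1_\kappa[v_1](x)\,dx$, and the cosine analogue for $B_{\kappa,n}$. It keeps the non-compact $T_\kappa$ in the model; only the replacement $j_{\nu,n}\to(n+\tfrac{\kappa}{2})\pi$ is a compact error. The model must then be shown semi-Fredholm by a separate coercivity argument. That argument combines Parseval on each parity class about $x=\tfrac12$ with a bounded left inverse of $P\circ S_{0,1}$ on $L^2_{\mathrm{even}}(0,1)$, namely $Lg=g-\frac{1}{x(1-x)}\int_0^x(1-2t)g(t)\,dt$, which is bounded by Hardy's inequality. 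It gives closed range with a kernel of dimension at most one, and this lemma is the missing idea.

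Note also that what matters is $\kappa_2-\kappa_1$ odd. For $(1,2)$ the leading frequencies $(2n+1)\pi$ and $(2n+2)\pi$ are complementary. Grouping $(1,2)$ with $(0,2)$ is therefore wrong, and it contradicts your own closed-range claim for $(1,2)$.

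\textbf{Injectivity.} There is no single transform $F(\lambda)$ vanishing on both spectra, because the eigenfunctions depend on $\kappa$. You get $F_{\kappa_i}(\lambda)=\int_0^1\sin(2\lambda x)\,T^1_{\kappa_i}[v_1](x)\,dx$, and a cosine analogue for $v_2$, each known to vanish only on its own spectrum. Moreover, $F_\kappa(\lambda)/(\lambda^{-\nu}J_\nu(\lambda))$ is merely an entire function of exponential type (generically $1$), not a polynomial. For $\kappa=0$ every $v_1$ even about $\tfrac12$ lies in the kernel, so no growth argument can conclude. The whole difficulty is the coupling between $T_{\kappa_1}v$ and $T_{\kappa_2}v$, which Hadamard factorization does not see.

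The paper converts each single-$\kappa$ condition into the statement that $A_\kappa(D)[T^j_\kappa v_j]$ is even ($j=1$) or odd ($j=2$) about $x=\tfrac12$. It does this via Kneser--Sommerfeld-type expansions, the transformation operators and the half-integer Bessel formulas. For each pair it then differentiates these parity relations and evaluates them at $x=\tfrac12$. This gives a linear ODE whose Cauchy data at $\tfrac12$ depend on one or two free parameters. It then checks, explicitly and for $(0,3)$ numerically, that the nonzero solutions are not in $L^2(0,1)$. Your sentence ``the explicit low-order Bessel formulas then force it to vanish'' is where all of this must happen, and the proposal gives no mechanism for it.

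Finally, $\lambda=0$ is an eigenvalue for every $\kappa\ge 0$, with eigenfunction $(x^\kappa,0)$, not only for $\kappa=0$. It gives the constraint $\int_0^1x^{2\kappa}v_2=0$, which is used in the Kneser--Sommerfeld step and is what closes the $v_2$ argument for $(0,3)$.
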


\medskip\noindent
The proof of Theorem~\ref{thm:differential-behavior} is given in
Sections~6--7. The case $(0,2)$ remains open, as we have not been able
to prove that the differential has closed range.
Theorem~\ref{thm:main-AKNS-01} then follows from the local injectivity result stated in Proposition~\ref{prop:local-injectivity}.

\medskip\noindent
In the appendix, we will describe how these questions arise in the analysis of inverse spectral problems for the radial Dirac operator (with possible addition of an Aharonov--Bohm potential) in dimension two and three and present some remaining open questions.

\section{Eigenvalue analysis in the unperturbed case $V=0$}

In this section we analyze the spectral problem in the unperturbed case $V=0$,
which serves as the reference configuration for the perturbative and inverse
analysis developed later.

\medskip\noindent
When $V=0\,$, the AKNS operator reduces to the first-order matrix equation
\begin{equation}\label{ODEmatrix}
\begin{pmatrix}
0 & -1\vphantom{\dfrac{\kappa}{x}}\\[0.3em]
1 & 0
\end{pmatrix} Z'(x)
+
\begin{pmatrix}
0 & -\dfrac{\kappa}{x}\\[0.5em]
-\dfrac{\kappa}{x} & 0
\end{pmatrix} Z(x)
=\lambda Z(x),
\qquad x\in(0,1),
\end{equation}
with the boundary condition 
\begin{equation}\label{BC-V0}
	Z_2(0)=0\,,\; Z_2(1)=0 \quad \text{for } \kappa = 0\,,
	\qquad
	Z_2(1)=0\,, \quad \text{for } \kappa \neq 0\, .
\end{equation}

\subsection{The case $\lambda=0$}

The case $\lambda=0$ requires a specific discussion.  
Setting $\lambda=0$ in the unperturbed Dirac equation yields
\[
Z_1'=\frac{\kappa}{x}\,Z_1\,,
\qquad
Z_2'=-\frac{\kappa}{x}\,Z_2\, .
\]
Hence the general solutions are
\[
Z_1(x)=C_1\,x^{\kappa}\,,
\qquad
Z_2(x)=C_2\,x^{-\kappa}\,.
\]
For $\kappa=0\,$, the boundary condition $Z_2(0)=0$ forces $C_2=0$.
For $\kappa\geq 1\,$, the condition $Z_2\in L^2(0,1)$ near $x=0$ again forces $C_2=0\,$. so that
\[
Z(x)=\binom{C_1\,x^{\kappa}}{0}\,.
\]
The  boundary condition at $x=1$ is satisfied, and therefore $0$ is an
eigenvalue of $H_\kappa(0)\,$.
The associated eigenfunction is thus given by
\[
Z_{\kappa,0}^{(0)}(x)
=
c_{\kappa,0}\,
\binom{x^{\kappa}}{0}\,,
\qquad
c_{\kappa,0}>0\,,
\]
and the normalization condition
\(
\|Z_{\kappa,0}^{(0)}\|_{L^2(0,1)^2}=1\,,
\)
yields
\[
c_{\kappa,0}=\sqrt{2\kappa+1}\,,
\qquad\text{so that}\qquad
Z_{\kappa,0}^{(0)}(x)=\sqrt{2\kappa+1}\binom{x^\kappa}{0}\,.
\]

\subsection{The case $\lambda \not=0$}

We now consider the case $\lambda \neq 0$, for which the system \eqref{ODEmatrix} decouples into two scalar equations for $Z_1$ and $Z_2$:
	\begin{equation}\label{ODE-Z1Z2}
		\left\{
		\begin{aligned}
			&Z_1''(x)+\left(\lambda^2-\frac{\kappa(\kappa-1)}{x^2}\right)Z_1(x)=0\,,\\[0.4em]
			&Z_2''(x)+\left(\lambda^2-\frac{\kappa(\kappa+1)}{x^2}\right)Z_2(x)=0\,.
		\end{aligned}
		\right.
	\end{equation}
	We set $\nu=\kappa+\tfrac12$.
	After the standard substitution $Z_j(x)=\sqrt{x}\,u_j(\lambda x)\,$, the system reduces to Bessel equations of orders $\nu-1$ and $\nu$.
	Accordingly, a fundamental system of solutions is given by
	\begin{equation}\label{eq:Fundamental}
		Z^{(0)}(x,\lambda)
		=
		\sqrt{\frac{\pi \lambda x}{2}}\,
		\begin{pmatrix}
			J_{\nu-1}(\lambda x)\\[0.4em]
			-\,J_{\nu}(\lambda x)
		\end{pmatrix}\,,
		\qquad
		W^{(0)}(x,\lambda)
		=
		\sqrt{\frac{\pi \lambda x}{2}}\,
		\begin{pmatrix}
			-\,Y_{\nu-1}(\lambda x)\\[0.4em]
			Y_{\nu}(\lambda x)
		\end{pmatrix}\,.
	\end{equation}
where $J_\nu$ and $Y_\nu$ denote the Bessel functions of the first and
second kinds (see \cite{Lebedev72}).  We recall that for $z\in\mathbb{C}\setminus(-\infty,0]$ and any real or complex parameter
$\nu\,$, one has
\begin{equation}\label{BesselJ}
J_{\nu}(z)
=\Bigl(\frac{z}{2}\Bigr)^{\nu}
\sum_{k=0}^{\infty}
(-1)^{k}\,
\frac{\bigl(\tfrac{z^{2}}{4}\bigr)^{k}}{k!\,\Gamma(\nu+k+1)}\,.
\end{equation}
For $\nu\notin\mathbb{Z}\,$, the function $Y_\nu$ is given by \footnote{For an integer $n$, the function $Y_n(z)$ is defined by the limit
\(
Y_n(z)=\lim_{\nu\to n} Y_\nu(z)\,.
\)}

\begin{equation}\label{BesselY}
Y_{\nu}(z)
=
\frac{
   J_{\nu}(z)\,\cos(\nu\pi)\;-\; J_{-\nu}(z)
}{
   \sin(\nu\pi)
}\,.
\end{equation}
We emphasize that for half-integer orders \(\nu\in \tfrac12+\mathbb{N}\), the function
\(\sqrt{z}\,J_\nu(z)\) is entire, whereas \(\sqrt{z}\,Y_\nu(z)\) is
generally meromorphic because of its singularity at \(z=0\).

\vspace{0.2cm}\noindent
As $x\to 0$\,, these functions satisfy the classical asymptotics
:
\begin{equation}\label{singularite}
J_\nu(x)\sim \frac{1}{\Gamma(\nu+1)}
\left(\frac{x}{2}\right)^\nu\,,
\qquad
Y_\nu(x)\sim -\,\frac{\Gamma(\nu)}{\pi}
\left(\frac{2}{x}\right)^\nu\,,
\qquad \nu>0\,.
\end{equation}
Hence $J_\nu$ is regular at the origin, whereas $Y_\nu$ is singular. In particular, among the two fundamental solutions in
\eqref{eq:Fundamental}, only $Z^{(0)}(x,\lambda)$ is square--integrable
near $x=0$ and satisfies the regularity condition.
For each eigenvalue $\lambda=\lambda_{\kappa,n}(0,0)$ of the unperturbed operator,
we define the associated eigenfunction by
\begin{equation}\label{eq:Fundamentalbis}
Z_{\kappa,n}^{(0)}(x)
=
c_{\kappa,n}
\begin{pmatrix}
Z_{1,\kappa,n}^{(0)}(x)\\[0.2em]
Z_{2,\kappa,n}^{(0)}(x)
\end{pmatrix}
=
c_{\kappa,n}\,
\sqrt{\lambda_{\kappa,n}(0,0) x}\,
\begin{pmatrix}
	J_{\nu-1}\!\bigl(\lambda_{\kappa,n}(0,0)\,x\bigr)\\[0.4em]
	-\,J_{\nu}\!\bigl(\lambda_{\kappa,n}(0,0)\,x\bigr)
\end{pmatrix}\,,
\end{equation}
where $c_{\kappa,n}>0$ is a normalization constant to be specified later.
Using the boundary condition at $x=1\,$, we obtain that the eigenvalues of
$H_\kappa(0)$ are the simple zeros of $J_\nu\,$, as will be detailed in the paragraph below.

\subsection{Symmetries}

In this paragraph we recall the symmetry properties of the unperturbed
spectrum for \(V=0\,\)\,.

\medskip
\noindent
For half--integer orders \(\nu=\kappa+\tfrac12\,\), the entire function
\(z\mapsto \sqrt{z}\,J_\nu(z)\) is an even function when \(\kappa\) is odd,
and an odd function when \(\kappa\) is even.
This parity property immediately yields the following symmetry for the nonzero eigenvalues:
\begin{equation}\label{eq:Symmetry}
	\lambda_{\kappa,-n}(0,0)
	= -\,\lambda_{\kappa,n}(0,0)\,,
	\mbox{ for } n\ge1\,.
\end{equation}

\vspace{0.1cm}\noindent
We recall that the boundary condition leads to the characteristic equation
\(J_\nu(\lambda)=0\)\,. If $\{j_{\nu,n}\}_{n\ge1}$ denotes the sequence of positive zeros of $J_\nu\,$,
then the nonzero eigenvalues of $H_\kappa(0)$ are
\begin{equation}\label{eq:Zeros}
	\lambda_{\kappa,\pm n} (0,0)=\pm j_{\nu,n}\,,\qquad n\ge1\,.
\end{equation}
Moreover, we have seen that $\lambda=0$ is an eigenvalue.
Thus the full spectrum is symmetric and ordered as
\begin{equation}\label{eq:Enumerate}
	\cdots < -\,j_{\nu,2}
	< -\,j_{\nu,1} < 0
	< j_{\nu,1}
	< j_{\nu,2}
	< \cdots\,,
\end{equation}
and we assign $\lambda=0$ to the index $n=0$ in the bi-infinite
enumeration $\{\lambda_{\kappa,n} (0)\}_{n\in\mathbb Z}$\,.

\medskip
\noindent

\subsection{Summary and notation}

The spectrum of the unperturbed operator $H_\kappa(0)$ consists of the simple
eigenvalue $\lambda_{\kappa,0}(0)=0$ and of the nonzero eigenvalues
$\lambda_{\kappa,\pm n}(0)=\pm j_{\nu,n}$ for $n\ge1$, forming a symmetric
bi-infinite sequence indexed by $n\in\mathbb Z$ (see \eqref{eq:Enumerate}).
This enumeration is consistent with the asymptotic formula
\eqref{eq:AsymptAKNS} (see \cite{DLMF},\,10.21 (vi)).

\medskip\noindent
The associated eigenfunctions are given by the regular solutions
$Z_{\kappa,n}^{(0)}\,$. For the zero eigenvalue, one has
\[
Z_{\kappa,0}^{(0)}(x)=\sqrt{2\kappa+1}\binom{x^\kappa}{0}\,,
\]
while for $n\neq0$ they are expressed in terms of Bessel functions.

\medskip\noindent
By symmetry, the eigenfunctions corresponding to $\lambda_{\kappa,n}(0)$ and
$\lambda_{\kappa,-n}(0)$ differ only by a sign in their oscillatory components.
In particular, the normalization constants depend only on $|n|\,$, and
\[
c_{\kappa,-n}=c_{\kappa,n}\,,
\mbox{ for }  n\ge1\,.
\]
Accordingly, we index the spectrum by $n\in\mathbb Z\,$, with $n=0$
corresponding to the zero eigenvalue.

\begin{rem}[Normalization constants] 
\label{rem:kappa-explicit}
We will need the asymptotic behavior of these normalization constants as \( n \to \infty \) (see Section~\ref{subsec:decouple}). In fact, the constants \( c_{\kappa,n} \) can be computed explicitly  using the
following standard finite-interval identity (often referred to as a
\emph{Lommel-type} formula, see (\cite{DLMF}, 10.22.5)): for any $\alpha,\ \mu\in\mathbb R^+$,
\begin{equation}\label{eq:lommel-square}
\int_0^1 x\,J_{\mu}(\alpha x)^2\,dx
=
\frac12\Bigl(J_{\mu}(\alpha)^2 - J_{\mu-1}(\alpha)\,J_{\mu+1}(\alpha)\Bigr)\,.
\end{equation}
Applying \eqref{eq:lommel-square} with $\alpha=j_{\nu,|n|}$ and $\mu=\nu$,
and using $J_\nu(j_{\nu,|n|})=0$ together with the recurrence
$$ J_{\nu-1}(\alpha)+J_{\nu+1}(\alpha)=\frac{2\nu}{\alpha}J_\nu(\alpha)\,,$$ we obtain
$$ J_{\nu-1}(j_{\nu,|n|})=-J_{\nu+1}(j_{\nu,|n|})$$ and hence
\[
\int_0^1 x\,J_{\nu}\!\bigl(j_{\nu,|n|}x\bigr)^2\,dx
=
\frac12\,J_{\nu+1}\!\bigl(j_{\nu,|n|}\bigr)^2\,.
\]
Similarly, applying \eqref{eq:lommel-square} with $\mu=\nu-1$ yields
\[
\int_0^1 x\,J_{\nu-1}\!\bigl(j_{\nu,|n|}x\bigr)^2\,dx
=
\frac12\,J_{\nu-1}\!\bigl(j_{\nu,|n|}\bigr)^2
=
\frac12\,J_{\nu+1}\!\bigl(j_{\nu,|n|}\bigr)^2\,.
\]
Substituting these identities into the normalization condition
$\|Z_{\kappa,n}^{(0)}\|_{L^2(0,1)^2}=1$ (see \eqref{eq:Fundamentalbis}) gives
\begin{equation}
c_{\kappa,n}
=
\frac{1}{\sqrt{\,j_{\nu,|n|}}\,
\bigl|J_{\nu+1}(j_{\nu,|n|})\bigr|}\,,
\mbox{ as }  n\neq0\,.
\end{equation}
The classical asymptotic expansions for Bessel functions and for their
	positive zeros $j_{\nu,n}$ (see, e.g., Watson~\cite{Wa44}) yield
	\begin{equation}\label{asymptconstant}
	c_{\kappa,n}^2
	=
	\frac{\pi}{2}
	+
	\frac{4\nu^{2}-1}{16 \pi\,\bigl(n+\nu/2-1/4\bigr)^{2}}
	+
	O\!\bigl(n^{-4}\bigr)\,,
	\qquad n\to\infty\,.
	\end{equation}
	Finally, in the zero-eigenvalue case, we recall that the normalization condition yields
	\[
	c_{\kappa,0}=\sqrt{2\kappa+1}\,.
	\]
\end{rem}

\bigskip \noindent
We conclude this section with the explicit analysis of the special case
$\kappa=0$. In this case, the computations are particularly simple and allow us to
illustrate the preceding constructions in a fully explicit manner.

\subsection{The case $\kappa=0$}

\noindent
We now consider the case $\kappa = 0$, for which $\nu = \tfrac{1}{2}$ and the singular term $\kappa/x$ disappears from the system. 
The analysis of this particular case is especially interesting (see Section~\ref{kappaegal0}). 
In this setting, the Dirac system \eqref{ODEmatrix} reduces to
\[
Z_1'=\lambda\,Z_2\,,
\qquad 
Z_2'=-\,\lambda\,Z_1\,,
\]
and the regular solution, characterized by the condition $Y_2(0)=0\,$, is given by
\[
Z^{(0)}(x,\lambda)
=
\begin{pmatrix}
\cos(\lambda x)\\[0.2em]
-\sin(\lambda x)
\end{pmatrix}\,.
\]

\noindent
Imposing the boundary condition at $x=1$ yields the characteristic equation
\(
\sin(\lambda)=0\,,
\)
so that the nonzero eigenvalues are explicitly given by
\begin{equation}\label{eq:l0-eigs}
	\lambda_{0,n}(0,0)=n\pi\,,
	\qquad n\in\mathbb Z\setminus\{0\}\,.
\end{equation}

\noindent
For each $n\in\mathbb Z$, the associated eigenfunction is
\begin{equation}\label{eq:l0-eigfct}
	Z_{0,n}^{(0)}(x)
	=
	\begin{pmatrix}
		\cos(n\pi x)\\[0.2em]
		-\sin(n\pi x)
	\end{pmatrix}\,,
\end{equation}
since the $L^2(0,1)^2$-norm of this vector-valued function equals $1\,$.

\begin{rem}
Recall that the Bessel functions of order $\tfrac12$ admit the elementary
representations
\begin{equation}\label{eq:HalfInt}
	J_{1/2}(z)=\sqrt{\frac{2}{\pi z}}\,\sin z\,,
	\qquad
	J_{-1/2}(z)=\sqrt{\frac{2}{\pi z}}\,\cos z\,.
\end{equation}
\end{rem}

\section{Spectral map and the linearized problem at $V=0$.}

In this section we introduce the spectral map and analyze its linearization at
the unperturbed configuration $V=0$. This linearized analysis provides the key
tool for the local inverse results established later.

\subsection{Differential of $\lambda_{\kappa,n}(p,q)$ and the spectral map}

In this subsection we recall the analytic dependence of the eigenvalues on the
AKNS potential $V=(p,q)$ and describe their Fr\'echet differential.

\vspace{0.2cm}\noindent
Following Serier~\cite[Prop.~3.1]{Serier2006}, for each fixed pair
$(\kappa,n)\in\mathbb N\times\mathbb Z$ the map
\[
(p,q)\in L^2(0,1)\times L^2(0,1)\longmapsto
\lambda_{\kappa,n}(p,q)
\]
is real-analytic. Moreover, if $\lambda_{\kappa,n}(p,q)$ is a simple eigenvalue of
$H_\kappa(V)$ with normalized eigenfunction
\[
Z_{\kappa,n}(x;p,q)
=
\begin{pmatrix}
	Z_{1,\kappa,n}(x;p,q)\\
	Z_{2,\kappa,n}(x;p,q)
\end{pmatrix}\,,
\qquad
\|Z_{\kappa,n}(\cdot;p,q)\|_{L^2(0,1)^2}=1\,,
\]
then the Fr\'echet differential at $(p,q)$ in the direction
$(v_1,v_2)\in L^2(0,1)\times L^2(0,1)$ is given by
\begin{equation}\label{eq:DiffGeneral}
	\begin{aligned}
		D_{(p,q)}\lambda_{\kappa,n}(p,q)\cdot (v_1,v_2)
		&=
		\int_0^1
		\Bigl(
		2\,Z_{1,\kappa,n}(x;p,q)\,Z_{2,\kappa,n}(x;p,q)\,v_1(x)\\
		&\qquad
		+\bigl(
		Z_{2,\kappa,n}(x;p,q)^2
		-
		Z_{1,\kappa,n}(x;p,q)^2
		\bigr)\,v_2(x)
		\Bigr)\,dx\,.
	\end{aligned}
\end{equation}

\medskip\noindent
The relation \eqref{eq:DiffGeneral} therefore allows us to compute the differential of the spectral map introduced in \eqref{eq:spectral-map-two-ell}.

\subsection{The linearized problem at $V=0$}

We now investigate the Fr\'echet derivative of the spectral map at the
unperturbed potential \(V=0\), which leads to the formulation of the
linearized inverse problem.
Our objective is to determine whether, for two distinct effective angular momenta \(\kappa_1 \neq \kappa_2\), the associated  map
\[
V=(p,q)\longmapsto
\bigl(\lambda_{\kappa_1,n}(p,q)\,,\,\lambda_{\kappa_2,n}(p,q)\bigr)_{n\in\mathbb Z}
\]
is locally injective at \(V=0\)\,.
Equivalently, the same question can be formulated in terms of the
renormalized eigenvalues \(\widetilde{\lambda}_{\kappa,n}\), since the
renormalization consists in subtracting an explicit function of \(n\)
which is independent of \((p,q)\) and therefore does not affect the Fr\'echet
differential at \(V=0\)\,.

\vspace{0.2cm}\noindent
From the general variation formula~\eqref{eq:DiffGeneral}, one obtains 
\begin{equation}\label{eq:LinEqGeneralBeta}
D_{(p,q)}\lambda_{\kappa,n}(0,0)\cdot (v_1,v_2)= 	\int_0^1
	\Bigl(
	2\,Z_{1,\kappa,n}^{(0)}(x)\,Z_{2,\kappa,n}^{(0)}(x)\,v_1(x)
	+
	\bigl[\bigl(Z_{2,\kappa,n}^{(0)}(x)\bigr)^2
	-
	\bigl(Z_{1,\kappa,n}^{(0)}(x)\bigr)^2\bigr]\,v_2(x)
	\Bigr)\,dx  \,,
\end{equation}
where
\[
Z_{\kappa,n}^{(0)}(x)
=
\begin{pmatrix}
Z_{1,\kappa,n}^{(0)}(x)\\[0.2em]
Z_{2,\kappa,n}^{(0)}(x)
\end{pmatrix}\,,
\]
is the normalized eigenfunction of \(H_\kappa(0)\) associated with the
eigenvalue \(\lambda_{\kappa,n}(0,0)\)\,.

\medskip
\noindent
We begin with the case of nonzero eigenvalues.
Using the results of the previous sections and the symmetry properties of the
unperturbed spectrum, we obtain for every $n\in\mathbb Z^\pm$

\begin{equation}\label{eq:DiffZeroSigned}
\begin{aligned}
D_{(p,q)}\lambda_{\kappa,n}(0,0)\cdot(v_1,v_2)
&=
c_{\kappa,|n|}^{\,2} 
\int_0^1 j_{\nu,|n|}\,x\,
\Bigl(
   \mp\,2\,
   J_{\nu-1}\!\bigl(j_{\nu,|n|}x\bigr)\,
   J_{\nu}\!\bigl(j_{\nu,|n|}x\bigr)\,v_1(x)
\\[-0.2em]
&\qquad\qquad\qquad
   +\,
   \bigl[
      J_{\nu}\!\bigl(j_{\nu,|n|}x\bigr)^2
      -
      J_{\nu-1}\!\bigl(j_{\nu,|n|}x\bigr)^2
   \bigr]\,v_2(x)
\Bigr)\,dx \,.
\end{aligned}
\end{equation}

\medskip
\noindent
We next consider the zero eigenvalue.
Recalling that
\[
Z_{\kappa,0}^{(0)}(x)
= c_{\kappa,0} 
\begin{pmatrix}
x^\kappa\\[0.2em]
0
\end{pmatrix}\,,
\qquad
c_{\kappa,0}=\sqrt{2\kappa+1}\,,
\]
substituting this expression into~\eqref{eq:DiffGeneral} yields
\begin{equation}\label{aazz}
D_{(p,q)}\lambda_{\kappa,0}(0,0)\cdot (v_1,v_2)
=
-\,(2\kappa+1) \int_0^1 x^{2\kappa}\,v_2(x)\,dx\, .
\end{equation}


\vspace{0.2cm}\noindent
The structure of system \eqref{eq:DiffZeroSigned} suggests that the contributions of
$v_1$ and $v_2$ can be separated. We now formalize this observation by
introducing a bounded isomorphism on the target space which exactly decouples
the differential of the spectral map.

\subsection{Decoupling of the differential via a continuous isomorphism}
\label{subsec:decouple}

We show that, up to a bounded isomorphism on the target space, the differential
of the spectral map can be reduced to a fully decoupled system. This allows us
to study independently the contributions of $v_1$ and $v_2$. We denote this
differential by
\[
S := D_{(p,q)}\mathcal S_{\kappa_1,\kappa_2}(0,0)\,.
\]

\medskip\noindent
\paragraph{Notation.}
For each $\kappa\in\{\kappa_1,\kappa_2\}$ and each $n\ge1$ (with $\nu=\kappa+\tfrac12$), 
we introduce the bounded linear functionals on $L^2(0,1)$
\[
A_{\kappa,n}(v_1)
:=
\int_0^1
2 j_{\nu,n} x\,J_{\nu-1}\!\bigl(j_{\nu,n}x\bigr)\,
J_{\nu}\!\bigl(j_{\nu,n}x\bigr)\,v_1(x)\,dx\,,
\]
\[
B_{\kappa,n}(v_2)
:=
\int_0^1
j_{\nu,n} x
\Bigl(
J_{\nu}\!\bigl(j_{\nu,n}x\bigr)^2
-
J_{\nu-1}\!\bigl(j_{\nu,n}x\bigr)^2
\Bigr)\,v_2(x)\,dx\,.
\]
In this notation, for $n\ge1\,$,
\[
D_{(p,q)}\lambda_{\kappa,\pm n}(0,0)\cdot(v_1,v_2)
=
c_{\kappa,n}^2\bigl(\mp A_{\kappa,n}(v_1)+B_{\kappa,n}(v_2)\bigr)\,,
\]
while for the zero mode one has
\begin{equation}\label{eq:B-diff-zero}
	D_{(p,q)}\lambda_{\kappa,0}(0,0)\cdot (v_1,v_2)
	=
	-\,c_{\kappa,0}^2 \int_0^1 x^{2\kappa}\,v_2(x)\,dx\,,
	\qquad
	c_{\kappa,0}=\sqrt{2\kappa+1}\,.
\end{equation}

\medskip\noindent
For $\kappa\in\{\kappa_1,\kappa_2\}\,$, define
\[
d_\kappa(v):=\bigl(d_{\kappa,n}(v)\bigr)_{n\in\mathbb Z}
:=
\bigl(D_{(p,q)}\lambda_{\kappa,n}(0,0)\cdot v\bigr)_{n\in\mathbb Z}
\in \ell^2(\mathbb Z)\,,
\qquad v=(v_1,v_2)\,,
\]
so that
\[
S(v)=(d_{\kappa_1}(v),\,d_{\kappa_2}(v))\in \ell^2(\mathbb Z)\times\ell^2(\mathbb Z)\,.
\]

\medskip\noindent
Now, for fixed $\kappa\,$, define the linear map
\[
\mathcal U_\kappa:\ell^2(\mathbb Z)\longrightarrow
\mathbb R\times \ell^2(\mathbb N^*)\times \ell^2(\mathbb N^*)
\]
by
\begin{equation}\label{eq:Uell-def}
	\mathcal U_\kappa(a)
	=
	\left(
	\frac{a_0}{c_{\kappa,0}^{\,2}}\,,\;
	\Bigl(\frac{a_{-n}-a_n}{2\,c_{\kappa,n}^{\,2}}\Bigr)_{n\ge1}\,,\;
	\Bigl(\frac{a_{-n}+a_n}{2\,c_{\kappa,n}^{\,2}}\Bigr)_{n\ge1}
	\right)\,,
\end{equation}
where $\mathbb{N}^* := \{1,2,3,\dots\}$ denotes the set of positive integers.
Using \eqref{asymptconstant}, we see that $(c_{\kappa,n})_{n\in\mathbb Z}$ is uniformly bounded above and below, so
$\mathcal U_\kappa$ is a bounded isomorphism. 
Applying $\mathcal U_\kappa$ to $d_\kappa(v)$ and using the formulas above yields the
exact decoupling
\begin{equation}\label{eq:Uell-T}
	\mathcal U_\kappa\bigl(d_\kappa(v)\bigr)
	=
	\left(
	-\int_0^1 x^{2\kappa}v_2\,,\;
	\bigl(A_{\kappa,n}(v_1)\bigr)_{n\ge1}\,,\;
	\bigl(B_{\kappa,n}(v_2)\bigr)_{n\ge1}
	\right)\,.
\end{equation}

\medskip\noindent
Finally, set $\mathcal U:=\mathcal U_{\kappa_1}\times \mathcal U_{\kappa_2}$, which
is a bounded isomorphism on $\ell^2(\mathbb Z)\times\ell^2(\mathbb Z)$\,. Then
\begin{equation}\label{eq:U-T-block}
	(\mathcal U\circ S)(v_1,v_2)
	=
	\Bigl(
	\mathcal M(v_2),\;
	\mathcal A_{\kappa_1,\kappa_2}(v_1),\;
	\mathcal B_{\kappa_1,\kappa_2}(v_2)
	\Bigr)\,,
\end{equation}
where
\[
\mathcal M(v_2)
=
\left(
-\int_0^1 x^{2\kappa_1}v_2\,,\;
-\int_0^1 x^{2\kappa_2}v_2
\right)\in\mathbb R^2\,\,,
\]
and 
\[
\mathcal A_{\kappa_1,\kappa_2}(v_1)
=
\Bigl(
(A_{\kappa_1,n}(v_1))_{n\ge1}\,,\;
(A_{\kappa_2,n}(v_1))_{n\ge1}
\Bigr)\,,
\qquad
\mathcal B_{\kappa_1,\kappa_2}(v_2)
=
\Bigl(
(B_{\kappa_1,n}(v_2))_{n\ge1}\,,\;
(B_{\kappa_2,n}(v_2))_{n\ge1}
\Bigr)\,.
\]
In particular, $\mathcal U\circ S$ is block diagonal: $v_1$ only enters
$\mathcal A_{\kappa_1,\kappa_2}$, whereas $v_2$ only enters
$(\mathcal M,\mathcal B_{\kappa_1,\kappa_2})\,$.

\subsection{Reformulation of the injectivity problem}\label{kappaegal0}

We now reformulate the injectivity of the Fr\'echet differential of the spectral
map $\mathcal S_{\kappa_1,\kappa_2}$ at $(p,q)=(0,0)\,$. By definition, injectivity
amounts to characterizing all perturbations $(v_1,v_2)$ such that
\[
S(v_1,v_2)=0\,,
\qquad
S=D_{(p,q)}\mathcal S_{\kappa_1,\kappa_2}(0,0)\,.
\]

\medskip\noindent
Since $\mathcal U$ is a bounded isomorphism on the target space, this condition
is equivalent to
\[
(\mathcal U\circ S)(v_1,v_2)=0\,.
\]
Using the block diagonal structure \eqref{eq:U-T-block}, the kernel condition
reduces to the decoupled system
\begin{equation}\label{eq:kernel-decoupled}
	\mathcal A_{\kappa_1,\kappa_2}(v_1)=0\,,
	\qquad
	\mathcal M(v_2)=0\,,
	\qquad
	\mathcal B_{\kappa_1,\kappa_2}(v_2)=0\,.
\end{equation}
This decoupling also reflects the choice of boundary conditions, which is encoded in the structure of the eigenfunctions. Thus, the study of the kernel separates into two independent problems: one
involving only the component $v_1$, governed by
$\mathcal A_{\kappa_1,\kappa_2}$, and one involving only the component $v_2$,
governed by $(\mathcal M,\mathcal B_{\kappa_1,\kappa_2})\,$.

\medskip\noindent
For a fixed effective angular momentum $\kappa$, the above conditions reduce to
\begin{equation}\label{eq:single-ell-kernel}
	A_{\kappa,n}(v_1)=0,\quad n\ge1\,,
	\qquad
	B_{\kappa,n}(v_2)=0,\quad n\ge1\,,
	\qquad
	\int_0^1 x^{2\kappa}v_2(x)\,dx=0\,.
\end{equation}

\medskip\noindent
As an illustration, consider the simple case $\kappa=0\,$. Using the explicit
expressions of the Bessel functions of order $\pm\tfrac12\,$, one obtains
\[
x\,J_{-1/2}(n\pi x)\,J_{1/2}(n\pi x)
= \tfrac12 \sin(2n\pi x)\,,
\qquad
x\bigl(J_{1/2}(n\pi x)^2 - J_{-1/2}(n\pi x)^2\bigr)
= -\cos(2n\pi x)\,.
\]
Hence \eqref{eq:single-ell-kernel} becomes
\[
\int_0^1 \sin(2n\pi x)\,v_1(x)\,dx=0\,,
\qquad n\ge1,
\]
and
\[
\int_0^1 \cos(2n\pi x)\,v_2(x)\,dx=0\,,
\qquad n\ge1,
\qquad
\int_0^1 v_2(x)\,dx=0\,.
\]
These relations show that $v_1$ is even and $v_2$ is odd with respect to
$x=\tfrac12\,$. Conversely, if $v_1$ is even and $v_2$ is odd with respect
to $x=\tfrac12\,$, then all the above integrals vanish. Hence these parity
conditions are necessary and sufficient for $S(v_1,v_2)=0$ in the case
$\kappa=0$.


\section{Kneser--Sommerfeld--Type Expansions}\label{s3}

The classical Kneser--Sommerfeld identity provides a series expansion over the
zeros \( j_{\nu,n} \) of \( J_\nu \)\,.
Its correct form, first given by Buchholz~\cite{Buch47a} and later clarified
by Hayashi~\cite{Hay82} and Martin~\cite{Martin21}, differs from the formula
stated by Watson, which omits an essential integral term.
The valid expansion
\eqref{eq:KS-correct} played a central role in our previous analysis of the
radial Schr\"odinger operator:
\begin{equation} \label{eq:KS-correct}
\sum_{n\ge1}
\frac{
   J_\nu(j_{\nu,n}x)\, J_\nu(j_{\nu,n}X)
}{
   (z^{2} - j_{\nu,n}^{2})\,[J'_\nu(j_{\nu,n})]^2
}
=
\frac{\pi}{4\,J_\nu(z)}\,J_\nu(xz)\,
\bigl[
   J_\nu(z)\,Y_\nu(Xz)
   - Y_\nu(z)\,J_\nu(Xz)
\bigr]\,,
\end{equation}
for $0<x \leq X \leq 1$\,.

\medskip
\noindent
In the present AKNS setting, the linearized system~\eqref{eq:DiffZeroSigned} involves both squared and 
mixed Bessel products.  
The relevant combinations are, with $\nu=\kappa+\tfrac12$\,, 
\[
J_{\nu}(j_{\nu,n}x)^{2}\,,\quad
J_{\nu-1}(j_{\nu,n}x)^{2}\quad \mbox{ and }\quad
J_{\nu-1}(j_{\nu,n}x)\,J_{\nu}(j_{\nu,n}x)\,.
\]

\vspace{0.2cm}\noindent
However, the last two expressions fall outside the framework of the classical
Kneser--Sommerfeld expansion, which treats only diagonal products of the form
\(J_\nu(x j_{\nu,n})J_\nu(X j_{\nu,n})\)\,.

\medskip\noindent
To handle the AKNS structure, we therefore require 
\emph{modified} Kneser--Sommerfeld--type identities.
In what follows, we now state three additional identities of the same type. For simplicity, we assume that $\nu\in\mathbb{R^+}\backslash\mathbb{N}\,$, although the formulas extend to arbitrary complex values of $\nu\,$.\footnote{Here we use the definition of $Y_n(z)$ recalled above.}

\begin{prop}\label{KSgen}
Let \(\nu\in\mathbb{R}_+\setminus\mathbb{N}\), \(z\neq 0\),  \(z\neq j_{\nu,n}\) and \(0<x\le X\le 1\)\,.
The following Kneser--Sommerfeld--type identities hold:
\begin{equation}\label{eq:KS-nu-one}
\begin{aligned}
\sum_{n\ge1}
\frac{
   J_{\nu-1}(j_{\nu,n}x)\, J_{\nu-1}( j_{\nu,n}X)
}{
   (z^{2}-j_{\nu,n}^{2})\,\bigl[J'_\nu(j_{\nu,n})\bigr]^{2}
}
&=
-\frac{\nu }{z^2}\, (xX)^{\nu-1}
\\[0.3em]
&\quad+
\frac{\pi}{4J_\nu(z)}\,J_{\nu-1}(xz)\,
\bigl(
   J_\nu(z)\,Y_{\nu-1}(Xz)
   - Y_\nu(z)\,J_{\nu-1}(Xz)
\bigr)\,,
\end{aligned}
\end{equation}

\begin{equation}\label{eq:KS-nu-minus-onebis}
\begin{aligned}
\sum_{n\ge1}
\frac{
   J_{\nu-1}(j_{\nu,n}x)\,J_{\nu}(j_{\nu,n}X)
}{
   (z^{2}-j_{\nu,n}^{2})\,j_{\nu,n}\,\bigl[J'_\nu(j_{\nu,n})\bigr]^{2}
}
&=
\frac{1}{2z^2}\, x^{\nu-1} (X^{-\nu} - X^\nu)
\\[0.3em]
&\quad+
\frac{\pi}{4z\,J_{\nu}(z)}\,J_{\nu-1}(xz)\,
\bigl(
   J_{\nu}(z)\,Y_{\nu}(Xz)
   - Y_{\nu}(z)\,J_{\nu}(Xz)
\bigr)\,,
\end{aligned}
\end{equation}

\begin{equation}\label{eq:KS-nu-minus-onesym}
\begin{aligned}
\sum_{n\ge1}
\frac{
   J_{\nu-1}(j_{\nu,n}X)\,J_{\nu}(j_{\nu,n}x)
}{
   (z^{2}-j_{\nu,n}^{2})\,j_{\nu,n}\,\bigl[J'_\nu(j_{\nu,n})\bigr]^{2}
}
&=
-\frac{1}{2z^2}\, x^\nu X^{\nu-1}
\\[0.3em]
&\quad+
\frac{\pi}{4z\,J_{\nu}(z)}\,J_{\nu}(xz)\,
\bigl(
   J_{\nu}(z)\,Y_{\nu-1}(Xz)
   - Y_{\nu}(z)\,J_{\nu-1}(Xz)
\bigr)\,.
\end{aligned}
\end{equation}
\end{prop}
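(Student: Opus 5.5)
Each identity will be proved as a Mittag-Leffler (partial fraction) expansion in the variable $z$. Fix $0<x\le X\le 1$ and let $F(z)$ denote the right-hand side of the identity under consideration, with the explicit polynomial-type term (for instance $-\frac{\nu}{z^2}(xX)^{\nu-1}$) moved to the left. There are three things to establish:
\begin{itemize}
\item $F$ is meromorphic in $z$;
\item its poles are exactly at $z=\pm j_{\nu,n}$, with the residues that produce the series;
\item $F$ tends to zero along a suitable sequence of expanding contours.
\end{itemize}
Then the standard contour argument, applied to $\frac{1}{2\pi i}\oint_{C_N}\frac{F(w)}{w-z}\,dw$ with $C_N$ the circles $|w|=(N+\frac\nu2+\frac14)\pi$ lying between consecutive zeros, gives the identities.

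\textbf{Singular structure.} Write $J_\nu(z)Y_\mu(Xz)-Y_\nu(z)J_\mu(Xz)$ using \eqref{BesselY}: the $J_\nu(z)J_\mu(Xz)\cot(\nu\pi)$-type pieces cancel. What remains is a combination of $J_{-\nu}(z)J_\mu(Xz)$ and $J_\nu(z)J_{-\mu}(Xz)$ divided by $\sin\nu\pi$. Multiplying by $J_{\nu-1}(xz)$ or $J_\nu(xz)$ and dividing by $J_\nu(z)$ (and by $z$ where present), the multivalued powers $z^{\pm\nu}$ combine into integer powers. By the series \eqref{BesselJ}, the ratio is then a single-valued meromorphic function of $z$, even in $z$, whose only possible poles are the zeros $\pm j_{\nu,n}$ of $J_\nu$ and $z=0$.

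\textbf{Residues at $\pm j_{\nu,n}$.} At $z=j_{\nu,n}$ only the term $-Y_\nu(z)J_\mu(Xz)/J_\nu(z)$ is singular. Its residue is $-Y_\nu(j_{\nu,n})J_{\nu-1}(xj)J_\mu(Xj)/J_\nu'(j)$, times the prefactor $\pi/4$ (or $\pi/(4z)$). Using the Wronskian $J_\nu Y_\nu'-J_\nu'Y_\nu=\frac{2}{\pi z}$ at a zero of $J_\nu$, we get $Y_\nu(j)=-\frac{2}{\pi j J_\nu'(j)}$, and the residue becomes $\frac{J_{\nu-1}(xj)J_\mu(Xj)}{2j\,[J_\nu'(j)]^2}$ (with an extra $1/j$ in the last two identities). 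This matches the residue of the summand $\frac{1}{z^2-j^2}$ at $z=j$, and evenness takes care of $-j_{\nu,n}$.

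\textbf{The point $z=0$.} Using the small-argument asymptotics \eqref{singularite}, or more precisely the leading terms of \eqref{BesselJ}, I expand the Bessel part near $z=0$ and check that its principal part is exactly $\frac{\nu}{z^2}(xX)^{\nu-1}$ in \eqref{eq:KS-nu-one}, $-\frac{1}{2z^2}x^{\nu-1}(X^{-\nu}-X^\nu)$ in \eqref{eq:KS-nu-minus-onebis}, and $\frac{1}{2z^2}x^\nu X^{\nu-1}$ in \eqref{eq:KS-nu-minus-onesym}. The explicit terms therefore cancel the pole at $0$; this is precisely why they appear. As a sanity check, the left-hand sides are regular at $z=0$.

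\textbf{Decay.} I use the Hankel asymptotics $J_\mu(w)\approx\sqrt{2/(\pi w)}\cos(w-\mu\pi/2-\pi/4)$, valid for $|w|$ large in $|\arg w|<\pi$, together with the analogous formula for $Y_\mu$. On $C_N$ the quantity $|J_\nu(z)|$ is bounded below by $c\,|z|^{-1/2}e^{|\operatorname{Im} z|}$. The combination $J_{\nu-1}(xz)\bigl(J_\nu(z)Y_\mu(Xz)-Y_\nu(z)J_\mu(Xz)\bigr)/J_\nu(z)$ is then $O(|z|^{-1}e^{-(1-X+x)|\operatorname{Im} z|}\cdots)$. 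More precisely, the exponential weight is $e^{(x+X-1)|\operatorname{Im}|}$ from one term and $e^{(x-X)|\operatorname{Im}|}$ from the other, so the decay is uniform when $x\le X$, except near the boundary case $x=X=1$. There the behaviour is only $O(|z|^{-1})$, which still suffices because the integrand carries the extra factor $1/(w-z)$. The explicit $z^{-2}$ terms also vanish at infinity.

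\textbf{Conclusion and main obstacle.} Letting $N\to\infty$ gives $F(z)=\sum_n\bigl(\text{principal parts}\bigr)$, which is the stated series; it converges absolutely since $[J_\nu'(j_{\nu,n})]^{-2}\sim \frac{\pi}{2} j_{\nu,n}$. The main obstacle is this decay estimate in the borderline configuration ($X=1$, or $x=X$). There the exponential gain disappears, and the naive $O(|z|^{-1})$ bound, after accounting for the $z$ vs.\ $z^2$ prefactors in \eqref{eq:KS-nu-one}, is only just good enough. If needed, I would first prove the identities for $x<X<1$ and then pass to the boundary by continuity in $x,X$, justified by uniform convergence of the series.
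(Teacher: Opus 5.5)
Your proposal is correct and is essentially the paper's argument: the paper integrates $\frac{W_\nu(w,X)}{w^{2}-z^{2}}\,\frac{w\,J_{\nu-1}(xw)}{J_\nu(w)}=\frac{2}{\pi}\,F(w)\bigl(\frac{1}{w-z}+\frac{1}{w+z}\bigr)$ over an indented half-rectangle. That is your Cauchy/Mittag-Leffler integral for the even function $F$ folded onto $\operatorname{Re} w\ge 0$, with the small semicircle at $0$ doing the job of your subtraction of the principal part, and it uses the same Wronskian residues and the same Hankel-type decay bound. One slip to fix in your decay step: the two Hankel pieces carry weights $e^{(x+X-2)|\operatorname{Im} z|}$ and $e^{(x-X)|\operatorname{Im} z|}$, not $e^{(x+X-1)|\operatorname{Im} z|}$ (which would grow when $x+X>1$). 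This cancellation is only visible in the Hankel form with relative errors, not in the $\cos$/$\sin$ form with additive $O(e^{|\operatorname{Im} w|}/|w|)$ errors. With the correct weights, $F=O(|z|^{-1})$ uniformly on $C_N$ for all $0<x\le X\le 1$, so no configuration is borderline and no continuity argument is needed.
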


\begin{proof}
We follow, step by step, the contour-integral argument of Watson for the
Kneser-Sommerfeld formula (see \cite{Wa44}), in the corrected form later
clarified by Buchholz, Hayashi and Martin. We first give the proof of \eqref{eq:KS-nu-one}.

\vspace{0.2cm}\noindent
Let $z\in\mathbb{C}\,$ with, for instance, $\Re z>0$ and $z\neq j_{\nu,n}$ for all $n\ge1\,$.
Following Watson's approach, we introduce, for $w\in\mathbb{C}$ and fixed
$0<X\le 1$, the auxiliary function
\begin{equation}\label{eq:defW}
W_\nu(w,X)
:= J_\nu(w)\,Y_{\nu-1}(Xw)\;-\;Y_\nu(w)\,J_{\nu-1}(Xw)\,.
\end{equation}
Using the small-$w$ asymptotics of $J_\nu$ and $Y_\nu$ (see
\eqref{singularite}), one readily obtains
\begin{equation}\label{eq:W_small_w}
W_\nu(w,X)
= \frac{2 X^{\nu-1}}{\pi w} \;+\; O(1)\,,
\qquad w\to 0\,.
\end{equation}
Recall the Hankel functions
\begin{equation}\label{Hankel}
H^{(1)}_\nu(w)=J_\nu(w)+i\,Y_\nu(w)\,, \qquad
H^{(2)}_\nu(w)=J_\nu(w)-i\,Y_\nu(w)\,.
\end{equation}
A short computation gives the equivalent representation
\begin{equation}\label{eq:W-Hankel}
W_\nu(w,X)
= -\,\frac{1}{2i}
\Bigl[
   H^{(1)}_\nu(w)\,H^{(2)}_{\nu-1}(Xw)
   - H^{(2)}_\nu(w)\,H^{(1)}_{\nu-1}(Xw)
\Bigr]\,.
\end{equation}
Finally, from the large-$w$  asymptotics (\cite{Lebedev72},
(5.11.4)-(5.11.5)), valid for $|\arg w|\le\pi-\delta$\,, $\delta \in (0,\pi)\,$,
\begin{equation}\label{AsymptH}
H^{(1)}_\nu(w)\sim 
\sqrt{\frac{2}{\pi w}}\,
e^{i(w-\frac{\nu\pi}{2}-\frac{\pi}{4})}\,,
\qquad
H^{(2)}_\nu(w)\sim 
\sqrt{\frac{2}{\pi w}}\,
e^{-i(w-\frac{\nu\pi}{2}-\frac{\pi}{4})}\,,
\end{equation}
we obtain, for $|w|$ large with $\Re w>0\,$,
\begin{equation}\label{eq:W_large_w}
|W_\nu(w,X)| \lesssim \frac{e^{(1-X)|\Im w|}}{|w|}\,,\qquad 0<X\le1\,.
\end{equation}

\vspace{0.2cm}\noindent
We consider the contour integral
\begin{equation}\label{eq:IBMRbis}
I_{B,M, \epsilon} = \oint_{C_{B,M,\epsilon}} \frac{W_\nu(w,X)}{w^{2}-z^{2}} \,\frac{w\,J_{\nu-1}(xw)}{J_\nu(w)}\,dw\,,
\end{equation}
where \( C_{B,M,\epsilon} \)  is the  rectangle in the half-plane $\Re w \geq 0\,$,
with vertices
\[
\pm iB, \qquad \pm iB + M\pi + \frac{(2\nu+1)\pi}{4}\,,
\]
for $B>0$ and $M\in\mathbb{N}$ large enough, indented at the origin with a half-circle of radius \(\epsilon > 0\) in the half-plane \(\Re w > 0\)\,: 


\begin{center}
\begin{tikzpicture}[scale=1.05, line cap=round, line join=round]
  \def\B{2.2}        
  \def\Len{4.8}      
  \def\eps{0.45}     

  \draw[->] (-0.8,0) -- (\Len+1.0,0) node[below] {$\Re w$};
  \draw[->] (0,-\B-0.8) -- (0,\B+0.8) node[left] {$\Im w$};

  \coordinate (Top) at (0,\B);
  \coordinate (Bot) at (0,-\B);
  \coordinate (TopCut) at (0,\eps);
  \coordinate (BotCut) at (0,-\eps);

  \coordinate (TopR) at (\Len,\B);
  \coordinate (BotR) at (\Len,-\B);

  \draw[thick,->] (BotCut) -- (Bot);
  \draw[thick,->] (Bot) -- (BotR);
  \draw[thick,->] (BotR) -- (TopR);
  \draw[thick,->] (TopR) -- (Top);
  \draw[thick,->] (Top) -- (TopCut);

  \draw[thick,->] (TopCut)
    arc[start angle=90, end angle=-90, radius=\eps];

  \fill (Top) circle (1.1pt) node[left] {$iB$};
  \fill (Bot) circle (1.1pt) node[left] {$-iB$};
  \fill (TopR) circle (1.1pt)
    node[right] {$iB+M\pi+\frac{(2\nu+1)\pi}{4}$};
  \fill (BotR) circle (1.1pt)
    node[right] {$-iB+M\pi+\frac{(2\nu+1)\pi}{4}$};

  \fill (TopCut) circle (1.0pt);
  \fill (BotCut) circle (1.0pt);
  \node[left] at (0,\eps) {$i\epsilon$};
  \node[left] at (0,-\eps) {$-i\epsilon$};
  \node at (1.2*\eps,0.1) {\small $\epsilon$};

  \fill (0,0) circle (1.2pt) node[below left] {$0$};

  \coordinate (z) at (1.8,0.6);
  \fill (z) circle (1.4pt) node[above right] {$z$};

  \node at (\Len*0.6,\B*0.8) {$C_{B,M,\epsilon}$};
\end{tikzpicture}
\end{center}


\medskip\noindent
Using  again the small-$w$ asymptotics of $J_\nu$ and $Y_\nu$ (see
\eqref{singularite}), one readily obtains 
\begin{equation}\label{eq:W_small_wbis}
\frac{W_\nu(w,X)}{w^{2}-z^{2}}
\,\frac{w\,J_{\nu-1}(xw)}{J_\nu(w)} = -\frac{4\nu}{\pi z^2 w} (xX)^{\nu-1} + O(1)\,,
\quad w\to 0\,.
\end{equation}
Using the parity identities 
\[
H^{(1)}_{\nu}(-w) = e^{-i\pi\nu}H^{(1)}_{\nu}(w)\,, \quad
H^{(2)}_{\nu}(-w) = e^{i\pi\nu}H^{(2)}_{\nu}(w)\,,
\]
which hold for all \( w \) on the imaginary axis, we observe that \( W_\nu(w,X) \) is an odd function of \( w \) on this axis.
Furthermore, the map
\[
w \mapsto \frac{w\,J_{\nu-1}(xw)}{J_\nu(w)}
\]
is even in \( w \)\,. Thus, the contribution from the vertical union of the intervals
\([-iB,-i\epsilon]\cup[i\epsilon,iB]\) cancels out.

\medskip
\noindent
Let us set
\begin{equation}\label{eq:def-fw}
f(w)
=
\frac{W_\nu(w,X)}{w^{2}-z^{2}}\,
\frac{w\,J_{\nu-1}(xw)}{J_\nu(w)}\,.
\end{equation}
Using \eqref{eq:W_small_wbis},  the contribution of the integral over the small circle
centered at the origin and of radius \(\epsilon\) (traversed in the
counterclockwise direction) converges, in the limit \(\epsilon\to 0\)\,, to
\begin{equation}\label{eq:small-circle-contribution}
\frac{4i\nu}{z^{2}}\,(xX)^{\nu-1}\,.
\end{equation}

\vspace{0.2cm}\noindent
Using Bessel asymptotics (\cite{Lebedev72}, (5.11.6))
\begin{equation}\label{AsymptJ}
J_\nu(w)
=\sqrt{\frac{2}{\pi w}}
\left[
  \cos\!\Bigl(w-\tfrac{\nu\pi}{2}-\tfrac{\pi}{4}\Bigr)
  + O\Bigl(\frac{e^{|\Im w|}}{|w|}\Bigr)
\right]\,,
\quad |\arg w|\le\pi-\delta \,,\ |w| \to +\infty\,,
\end{equation}
with $\delta \in (0,\pi)$,
we deduce that no Bessel zero $j_{\nu,n}$ lies on the vertical segments and that on the three other sides,
\begin{equation}\label{eq:integrand_bound}
\left|
\frac{
   H^{(1)}_\nu(w)\,H^{(2)}_{\nu-1}(Xw)
   - H^{(2)}_\nu(w)\,H^{(1)}_{\nu-1}(Xw)
}{w^{2}-z^{2}}
\frac{w\,J_{\nu-1}(xw)}{J_\nu(w)}
\right|
\;\lesssim\;
\frac{e^{(x-X)|\Im w|}}{|w|^{2}}\,.
\end{equation}
Since by the assumption in the proposition  $x\le X$, the integrand decays like $|w|^{-2}$ on these sides.
Letting \(B,M\to\infty\), all these contributions vanish. Therefore, by the
residue theorem, letting \(B,M\to\infty\) in \eqref{eq:IBMRbis} and
\(\epsilon\to 0\)\,, we obtain
\begin{equation}\label{thmresidus}
\frac{4i\nu}{z^{2}}\,(xX)^{\nu-1}
=
2\pi i
\sum \operatorname*{Res}(f)\,,
\end{equation}
where the sum runs over the poles inside the contour, namely
\[
w=z
\quad\text{and}\quad
w=j_{\nu,n}, \qquad n\ge1\,.
\]

\vspace{0.2cm}\noindent
a. Residue at $w=j_{\nu,n}$ : the integrand has a simple pole, giving
\begin{equation}\label{eq:res_jnun}
\Res_{w=j_{\nu,n}}(f)
=
\frac{
   J_{\nu-1}(X j_{\nu,n})\, j_{\nu,n}\, J_{\nu-1}(x j_{\nu,n})\,
   Y_\nu(j_{\nu,n})
}{
   (z^{2}-j_{\nu,n}^{2})\,J'_\nu(j_{\nu,n})
}\,.
\end{equation}
We use the identity
\[
Y_\nu(j_{\nu,n})
=
\frac{
   Y_\nu(j_{\nu,n})J'_\nu(j_{\nu,n})
   - J_\nu(j_{\nu,n})Y'_\nu(j_{\nu,n})
}{J'_\nu(j_{\nu,n})}\,,
\]
and the Wronskian formula (\cite{Lebedev72}, (5.9.2))
\[
J_\nu(z)Y'_\nu(z)-J'_\nu(z)Y_\nu(z)=\frac{2}{\pi z}\,,
\]
to obtain
\[
Y_\nu(j_{\nu,n})
=
-\frac{2}{\pi\, j_{\nu,n}\, J'_\nu(j_{\nu,n})}\,.
\]
Substituting this into \eqref{eq:res_jnun} gives
\begin{equation}\label{eq:res_final_jnun}
\Res_{w=j_{\nu,n}}(f)=
-\frac{2}{\pi}\,
\frac{
   J_{\nu-1}(X j_{\nu,n})\, J_{\nu-1}(x j_{\nu,n})
}{
   (z^{2}-j_{\nu,n}^{2})\,[J'_\nu(j_{\nu,n})]^2
}\,.
\end{equation}

\vspace{0.2cm}\noindent
b. Residue at $w=z$ : a direct computation shows:
\begin{equation}\label{eq:res_z}
\Res_{w=z}(f)
=
\frac{1}{2J_\nu(z)}\,J_{\nu-1}(xz)\,
\bigl[
   J_\nu(z)\,Y_{\nu-1}(Xz)
   - Y_\nu(z)\,J_{\nu-1}(Xz)
\bigr]\,.
\end{equation}
Summing the residues \eqref{eq:res_final_jnun} and \eqref{eq:res_z} and invoking
\eqref{thmresidus} yields precisely the identity \eqref{eq:KS-nu-one}.

\vspace{0.5cm}\noindent 
To prove the identity \eqref{eq:KS-nu-minus-onebis}, we introduce a new auxiliary  function
\begin{equation}\label{eq:defWmodifiedbis}
\tilde W_\nu(w,X) := J_\nu(w)\,Y_{\nu}(Xw) - Y_\nu(w)\,J_{\nu}(Xw)\,,
\end{equation}
and we consider the contour integral
\begin{equation}\label{eq:IBMRter}
I_{B,M,\epsilon} = \oint_{C_{B,M,\epsilon}} \frac{\tilde W_\nu(w,X)}{w^{2}-z^{2}} \,\frac{J_{\nu-1}(xw)}{J_\nu(w)}\,dw\,,
\end{equation}
where \( C_{B,M,\epsilon} \) is the same rectangle as above, indented at the origin with a half-circle of radius \( \epsilon > 0 \)\,. We conclude in exactly the same way.

\vspace{0.5cm}\noindent
Similarly, to prove the identity \eqref{eq:KS-nu-minus-onesym}, we use again the auxiliary function \eqref{eq:defW}
and consider the contour integral
\begin{equation}\label{eq:IBMRtersym}
I_{B,M,\epsilon} = \oint_{C_{B,M,\epsilon}} \frac{W_\nu(w,X)}{w^{2}-z^{2}} \,\frac{J_{\nu}(xw)}{J_\nu(w)}\,dw\,,
\end{equation}
where \( C_{B,M,\epsilon} \) is the same contour as before. The proof then follows analogously.
\end{proof}

\vspace{0.2cm}\noindent
The following result is readily obtained:

\begin{coro}\label{KSsomme} 
Let $\nu\in\mathbb{R^+}\backslash\mathbb{N}$, and  \( x \in (0,1] \)\,. Then, for all $z \notin \{ 0, j_{\nu,n}\}_{n \geq 1}$,
\begin{equation}\label{eq:KS-nu-minus-oneadd}
\begin{aligned}
\sum_{n \geq 1}
\frac{ x \, J_{\nu-1}(j_{\nu,n}x)\,J_{\nu}(j_{\nu,n}x)}{(z^{2}-j_{\nu,n}^{2})\,j_{\nu,n}\,\bigl[J'_\nu(j_{\nu,n})\bigr]^{2}}
&=
\frac{1}{4z^2 }\, (1-2x^{2\nu})
\\[0.3em]
&\quad+
\frac{\pi \, x}{8z\,J_{\nu}(z)}\, \left[
   J_{\nu}(z)\,\bigl(J_{\nu-1}(zx)\,Y_{\nu}(zx) + Y_{\nu-1}(zx)\,J_{\nu}(zx)\bigr)
   \right.
\\[0.3em]
&\qquad
\left.
   - 2 Y_{\nu}(z)\,J_{\nu-1}(zx)\,J_{\nu}(zx)
\right]\,.
\end{aligned}
\end{equation}
\end{coro}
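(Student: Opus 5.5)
The plan is to specialize the two mixed identities of Proposition~\ref{KSgen}, namely \eqref{eq:KS-nu-minus-onebis} and \eqref{eq:KS-nu-minus-onesym}, to the diagonal $X=x$ and add them. This is legitimate because both are stated for $0<x\le X\le 1$, so $X=x\in(0,1]$ is allowed, including $x=1$. With $X=x$, the left-hand side of \eqref{eq:KS-nu-minus-onebis} has summand $J_{\nu-1}(j_{\nu,n}x)J_\nu(j_{\nu,n}x)/\bigl((z^2-j_{\nu,n}^2)j_{\nu,n}[J_\nu'(j_{\nu,n})]^2\bigr)$, and so does that of \eqref{eq:KS-nu-minus-onesym}. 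Adding the two identities therefore gives twice the sum $\sum_{n\ge1}J_{\nu-1}(j_{\nu,n}x)J_\nu(j_{\nu,n}x)/(\cdots)$. Multiplying by $x/2$ produces exactly the left-hand side of \eqref{eq:KS-nu-minus-oneadd}.

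It remains to check the right-hand sides after multiplying their sum by $x/2$.

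\textbf{Algebraic terms.} From \eqref{eq:KS-nu-minus-onebis} we get
\[
\frac{1}{2z^2}\,x^{\nu-1}\bigl(x^{-\nu}-x^{\nu}\bigr)=\frac{1}{2z^2}\bigl(x^{-1}-x^{2\nu-1}\bigr),
\]
and from \eqref{eq:KS-nu-minus-onesym} we get $-\frac{1}{2z^2}x^{2\nu-1}$. Their sum is $\frac{1}{2z^2}(x^{-1}-2x^{2\nu-1})$. Multiplying by $x/2$ gives $\frac{1}{4z^2}(1-2x^{2\nu})$, as claimed.

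\textbf{Bessel terms.} The two identities share the prefactor $\frac{\pi}{4zJ_\nu(z)}$. The bracketed expressions are
\[
J_{\nu-1}(xz)\bigl(J_\nu(z)Y_\nu(xz)-Y_\nu(z)J_\nu(xz)\bigr)
\quad\text{and}\quad
J_\nu(xz)\bigl(J_\nu(z)Y_{\nu-1}(xz)-Y_\nu(z)J_{\nu-1}(xz)\bigr).
\]
Collecting the coefficient of $J_\nu(z)$ gives $J_{\nu-1}(zx)Y_\nu(zx)+Y_{\nu-1}(zx)J_\nu(zx)$. Collecting the coefficient of $Y_\nu(z)$ gives $-2J_{\nu-1}(zx)J_\nu(zx)$. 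Multiplying by $x/2$ turns the prefactor into $\frac{\pi x}{8zJ_\nu(z)}$, which yields the stated bracket.

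The only point needing care is the range of $z$. The contour argument in Proposition~\ref{KSgen} was written for $\Re z>0$, whereas the corollary allows all $z\notin\{0,j_{\nu,n}\}$.
\begin{itemize}
\item The left-hand side is even in $z$ and analytic on $\mathbb{C}\setminus\{0,\pm j_{\nu,n}\}$, since the series converges locally uniformly there.
\item The right-hand side is likewise even, because $Y_\nu(z)/J_\nu(z)$ and the products involved transform consistently under $z\mapsto -z$. Alternatively, one can invoke the identity theorem for the common analytic continuation.
\end{itemize}
The identity therefore extends from $\Re z>0$ to all admissible $z$. Apart from this, the proof is bookkeeping.
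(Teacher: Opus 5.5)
Your argument is correct and is the paper's proof: set $X=x$ in \eqref{eq:KS-nu-minus-onebis} and \eqref{eq:KS-nu-minus-onesym}, add the two identities, and multiply by $x/2$; your bookkeeping of the algebraic terms and of the Bessel brackets checks out. The discussion of the range of $z$ is not needed, since Proposition~\ref{KSgen} is already stated for every $z\neq 0$, $z\neq j_{\nu,n}$. It is nonetheless a reasonable supplement to the ``for instance $\Re z>0$'' in that proposition's proof.
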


\begin{proof}
The result follows by adding \eqref{eq:KS-nu-minus-onebis} and \eqref{eq:KS-nu-minus-onesym} with \( x = X \)\, and multiplying by $\frac{x}{2}$.
\end{proof}

\noindent


\section{Application of the Kneser--Sommerfeld representation}
\label{s4}

\subsection{Preliminaries}

We have seen that for a fixed  effective angular momentum $\kappa$ the linearized condition
\[
D_{(p,q)}\lambda_{\kappa,n} (0,0) \cdot (v_1, v_2) = 0\,,
\qquad \text{for all } n \in \mathbb{Z}\,,
\]
is equivalent to the relations (see \eqref{eq:single-ell-kernel})
\begin{equation}\label{eq:v1-system}
	\int_0^1 x \, J_{\nu-1}(j_{\nu,n}x) \, J_{\nu}(j_{\nu,n}x) \, v_1(x) \, dx = 0\,,
	\qquad n \geq 1\,,
\end{equation}
and 
\begin{equation}\label{eq:v2-system}
	\int_0^1 x \left[ J_{\nu}(j_{\nu,n}x)^2 - J_{\nu-1}(j_{\nu,n}x)^2 \right] v_2(x) \, dx = 0\,,
	\qquad n \geq 1\,,
\end{equation}
with the additional constraint
\begin{equation}\label{eq:zero-eigenvalue-constraintbis}
	\int_0^1 x^{2\kappa} \, v_2(x) \, dx = 0\,.
\end{equation}
Let us first examine the conditions (\ref{eq:v2-system})-\eqref{eq:zero-eigenvalue-constraintbis}. We use the classical Kneser-Sommerfeld expansion (\ref{eq:KS-correct}) and the relation (\ref {eq:KS-nu-one}) with \(x = X\)\,. We multiply (\ref{eq:v2-system}) by
$$
\frac{1}{(z^{2}-j_{\nu,n}^{2})\,\bigl[J'_\nu(j_{\nu,n})\bigr]^{2}}
$$
with $z \notin \{0, j_{\nu,n}\}_{n \geq 1}$. Summing over \(n\), and using  $2\nu-1=2\kappa$,  we obtain for such $z$,
\begin{multline}\label{eq:final-relation}
	- \frac{\nu}{z^2} \int_0^1 x^{2\nu-1} v_2(x) \, dx
	+ \frac{\pi}{4 J_\nu(z)} \int_0^1 x \Bigl( J_\nu(zx) \left[ J_\nu(z) Y_\nu(zx) - Y_\nu(z) J_\nu(zx) \right] \\
	- J_{\nu-1}(zx) \left[ J_\nu(z) Y_{\nu-1}(zx) - Y_\nu(z) J_{\nu-1}(zx) \right] \Bigr) v_2(x) \, dx = 0\,.
\end{multline}
Since $2\nu-1=2\kappa$, the first integral in
\eqref{eq:final-relation} vanishes thanks to the constraint
\eqref{eq:zero-eigenvalue-constraintbis}.  
Hence we obtain the simplified identity:
\begin{multline}\label{eq:final-relationfinal}
	\int_0^1 x \Bigl( J_\nu(zx) \left[ J_\nu(z) Y_\nu(zx) - Y_\nu(z) J_\nu(zx) \right] \\
	- J_{\nu-1}(zx) \left[ J_\nu(z) Y_{\nu-1}(zx) - Y_\nu(z) J_{\nu-1}(zx) \right] \Bigr) v_2(x) \, dx = 0\,,
\end{multline}
which can be rewritten for $z \notin \{0, j_{\nu,n}\}_{n \geq 1}$ as
\begin{equation}\label{eq:decomposed-relation}
	\begin{split}
		& J_\nu(z) \int_0^1 x \Bigl( J_\nu(zx) Y_\nu(zx) - J_{\nu-1}(zx) Y_{\nu-1}(zx) \Bigr) v_2(x) \, dx \\
		&\quad - Y_\nu(z) \int_0^1 x \Bigl( J_\nu(zx)^2 - J_{\nu-1}(zx)^2 \Bigr) v_2(x) \, dx = 0\,.
	\end{split}
\end{equation}

\medskip\noindent
By continuity with respect to $z$, the identity (\ref{eq:decomposed-relation}) extends to $z \in \C^*\,$.

\vspace{0.5cm}
\noindent
In the same way, using Corollary~\ref{KSsomme}, we obtain from \eqref{eq:v1-system}, for $z \notin \{0, j_{\nu,n}\}_{n \geq 1}$ 
\begin{equation}\label{eqv1}
\begin{split}
    \int_0^1 (1 - 2x^{2\nu})\, v_1(x) \, dx
    &+ \frac{\pi}{2 J_\nu(z)} \int_0^1 (zx)\,
    \Bigl(
        J_{\nu}(z)\bigl[
            J_{\nu-1}(zx) Y_\nu(zx) + Y_{\nu-1}(zx) J_\nu(zx)
        \bigr] \\
    &\qquad\qquad\qquad
        -2Y_\nu(z) J_{\nu-1}(zx) J_{\nu}(zx)
    \Bigr) v_1(x) \, dx
    = 0 \, .
\end{split}
\end{equation}
Thus, using the large-$w$ asymptotics for Bessel functions \eqref{AsymptJ} together with the asymptotics for $Y_\nu(w)$ (see \cite{Lebedev72}, (5.11.7)),
\begin{equation}\label{AsymptY}
Y_\nu(w)
=
\sqrt{\frac{2}{\pi w}}
\left[
\sin\!\Bigl(w-\frac{\nu\pi}{2}-\frac{\pi}{4}\Bigr)
+O\!\left(\frac{e^{|\Im w|}}{|w|}\right)
\right]\,,
\qquad |\arg w|\le \pi-\delta\,,\quad |w|\to+\infty \,,
\end{equation}
we deduce using the Riemann-Lebesgue lemma,  that the integral in \eqref{eqv1} is $o(1)$ as $z\to +\infty$ away from the points $j_{\nu,n}\,$. Consequently,
\begin{equation}\label{annulation}
    \int_0^1 \bigl(1 - 2x^{2\nu}\bigr)\, v_1(x)\, dx = 0 \,.
\end{equation}
As a consequence, for all  $z \notin \{0, j_{\nu,n}\}_{n \geq 1}$, we obtain
\begin{equation}\label{decouiplagev1}
\begin{aligned}
&J_{\nu}(z)\int_0^1 x \bigl(J_{\nu-1}(zx)Y_{\nu}(zx) + Y_{\nu-1}(zx)J_{\nu}(zx)\bigr) v_1(x) \, dx \\
&\quad + Y_{\nu}(z)\int_0^1 x \bigl(-2J_{\nu-1}(zx)J_{\nu}(zx)\bigr) v_1(x) \, dx = 0\,,
\end{aligned}
\end{equation}
and this identity extends to all $z\in\mathbb{C}^*\,$.

\vspace{0.2cm}\noindent
Now, we use the vector functions introduced by Serier:
\[
\Phi_\kappa(x)
=
\begin{pmatrix}
	\Phi_{\kappa,1}(x)\\[0.3em]
	\Phi_{\kappa,2}(x)
\end{pmatrix}
=
\frac{\pi x}{2}
\begin{pmatrix}
	-2\,J_{\nu-1}(x)\,J_\nu(x) \\[0.3em]
	J_\nu(x)^2-J_{\nu-1}(x)^2
\end{pmatrix}\,,
\]
\[
\Psi_\kappa(x)
=
\begin{pmatrix}
	\Psi_{\kappa,1}(x)\\[0.3em]
	\Psi_{\kappa,2}(x)
\end{pmatrix}
=
\frac{\pi x}{2}
\begin{pmatrix}
	J_{\nu-1}(x)Y_\nu(x)+J_\nu(x)Y_{\nu-1}(x) \\[0.3em]
	J_{\nu-1}(x)Y_{\nu-1}(x)-J_\nu(x)Y_\nu(x)
\end{pmatrix}\,,
\]
with $\nu=\kappa+\tfrac12\,$. With this notation, the previous computations can be summarized in the following proposition.

\begin{prop}\label{prop:prelim-identities}
	Let $(v_1,v_2)$ satisfy the linearized spectral conditions
	\eqref{eq:v1-system}--\eqref{eq:v2-system} together with the constraint
	\eqref{eq:zero-eigenvalue-constraintbis}. Then the following statements hold.
	
	\begin{enumerate}
		
		\item The function $v_1$ satisfies
		\begin{equation}\label{eq:prop-v1-constraint}
			\int_0^1 (1-2x^{2\nu})\,v_1(x)\,dx =0\, .
		\end{equation}
		Moreover, for all $z\in\C^*\,$,
		\begin{equation}\label{eq:prop-v1}
			\int_0^1
			\bigl[J_\nu(z)\,\Psi_{\kappa,1}(zx)
			+
			Y_\nu(z)\,\Phi_{\kappa,1}(zx)\bigr]
			\,v_1(x)\,dx
			=0 \,.
		\end{equation}
		
		\item For all $z\in\C^*$, the function $v_2$ satisfies
		\begin{equation}\label{eq:prop-v2}
			\int_0^1
			\bigl[J_\nu(z)\,\Psi_{\kappa,2}(zx)
			+
			Y_\nu(z)\,\Phi_{\kappa,2}(zx)\bigr]
			\,v_2(x)\,dx
			=0 \, .
		\end{equation}
		
	\end{enumerate}
\end{prop}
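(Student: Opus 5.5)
The proposition essentially repackages the computations carried out just before it, so the plan is to verify the three claims in order, relying on the Kneser--Sommerfeld-type identities of Proposition~\ref{KSgen} and Corollary~\ref{KSsomme}.

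\textbf{Step 1: the identity for $v_2$.} Fix $z\notin\{0\}\cup\{j_{\nu,n}\}_{n\ge1}$. Multiply \eqref{eq:v2-system} by $\bigl((z^2-j_{\nu,n}^2)[J'_\nu(j_{\nu,n})]^2\bigr)^{-1}$ and sum over $n\ge1$. The interchange of sum and integral is justified by $L^2$ convergence: the summands are $O(n^{-2})$ uniformly in $x\in[0,1]$, since $x\,J_\mu(j_{\nu,n}x)^2$ is uniformly bounded and $[J'_\nu(j_{\nu,n})]^{-2}\sim \tfrac{\pi}{2}j_{\nu,n}$. Then evaluate the sums using \eqref{eq:KS-correct} and \eqref{eq:KS-nu-one} at $x=X$. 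The polynomial term $-\frac{\nu}{z^2}x^{2\nu-1}=-\frac{\nu}{z^2}x^{2\kappa}$ integrates to zero by \eqref{eq:zero-eigenvalue-constraintbis}. Multiplying what remains by $\tfrac{2}{\pi}\cdot\tfrac{4J_\nu(z)}{\pi}$ and regrouping the $J_\nu(z)$ and $Y_\nu(z)$ coefficients gives exactly $\int_0^1 [J_\nu(z)\Psi_{\kappa,2}(zx)+Y_\nu(z)\Phi_{\kappa,2}(zx)]v_2\,dx=0$; note the sign conventions in $\Psi_{\kappa,2}$ and $\Phi_{\kappa,2}$.

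\textbf{Step 2: the identity for $v_1$.} Proceed in the same way with Corollary~\ref{KSsomme}, using the weight $\bigl((z^2-j_{\nu,n}^2)j_{\nu,n}[J'_\nu(j_{\nu,n})]^2\bigr)^{-1}$. This gives \eqref{eqv1} after multiplying by $4z^2$, with the constant term $\int_0^1(1-2x^{2\nu})v_1\,dx$.

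\textbf{Step 3: killing the constant term (the main obstacle).} The delicate point is showing that this constant term vanishes. Let $z=t\to+\infty$ along a sequence staying a fixed distance from the zeros, for instance $t_M=M\pi+\frac{(2\nu+1)\pi}{4}$, where $|J_\nu(t_M)|\gtrsim t_M^{-1/2}$. By \eqref{AsymptJ} and \eqref{AsymptY}, $\frac{J_\nu(t)}{J_\nu(t)}=1$ and $\frac{Y_\nu(t)}{J_\nu(t)}$ stays bounded along this sequence. Meanwhile $tx\,J_{\nu-1}(tx)Y_\nu(tx)$ and the similar products equal bounded combinations of $1$, $\cos(2tx)$ and $\sin(2tx)$, up to errors of size $O((1+tx)^{-1})$.

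The non-oscillating parts need care. For the mixed products $J_{\nu-1}Y_\nu+Y_{\nu-1}J_\nu$ and $J_{\nu-1}J_\nu$, the leading terms are $\sin\alpha\cos\alpha$-type with $\alpha=tx-\frac{\nu\pi}{2}+\frac{\pi}{4}$, hence purely oscillatory. The phase shift between orders $\nu-1$ and $\nu$ is $\pi/2$: one gets $\cos(\alpha)\sin(\alpha-\pi/2)+\dots=-\cos 2\alpha$, and $\cos\alpha\cos(\alpha-\pi/2)=\tfrac12\sin 2\alpha$. So their integrals against $v_1$ are $o(1)$ by Riemann--Lebesgue. The error terms contribute $\int_0^1 \min(1,(tx)^{-1})|v_1|\,dx\to0$ by dominated convergence. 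Letting $M\to\infty$ therefore yields \eqref{eq:prop-v1-constraint}. Dropping the constant term and multiplying by $J_\nu(z)/(\pi z)$-type factors then gives \eqref{eq:prop-v1}, since $\frac{\pi zx}{2}(-2J_{\nu-1}J_\nu)=\Phi_{\kappa,1}(zx)$.

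\textbf{Step 4: extension to all $z\in\C^*$.} Both left-hand sides are continuous in $z\in\C\setminus(-\infty,0]$, and in fact analytic there. They vanish off the discrete set $\{j_{\nu,n}\}$, so by continuity they vanish on all of $\C\setminus(-\infty,0]$. Using the branch of $Y_\nu$ from \eqref{BesselY}, analytic continuation gives the identity on $\C^*$ in the sense of the chosen determination.
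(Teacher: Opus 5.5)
Your proposal is correct and follows the paper's own route: sum the kernel conditions against the Kneser--Sommerfeld weights, use the zero-mode constraint to remove the polynomial term for $v_2$, let $z\to+\infty$ away from the zeros and apply Riemann--Lebesgue to force $\int_0^1(1-2x^{2\nu})v_1\,dx=0$, then extend in $z$ by continuity. Your explicit sequence $t_M$, the check that the non-oscillatory parts of $J_{\nu-1}Y_\nu+Y_{\nu-1}J_\nu$ cancel, and the sum--integral interchange make precise what the paper leaves implicit. One small fix: the uniform $O(n^{-2})$ bound on the summands needs $x\,J_\mu(j_{\nu,n}x)^2=O(1/j_{\nu,n})$ uniformly in $x$, which follows from boundedness of $\sqrt{s}\,J_\mu(s)$ on $[0,\infty)$ for $\mu\ge-\tfrac12$; mere boundedness of $x\,J_\mu(j_{\nu,n}x)^2$ would only give $O(1/n)$.
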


\subsection{A first injectivity result}

We have seen in the previous subsection that the linearization condition implies 
the integral constraint
\begin{equation}\label{eq:v1-constraintconc}
	\int_0^1 \bigl(1-2x^{2\nu}\bigr)\,v_1(x)\,dx = 0 \,,
	\qquad \nu = \kappa + \frac12 ,
\end{equation}
while the presence of the zero eigenvalue imposes (see \eqref{eq:single-ell-kernel})
\begin{equation}\label{eq:v2-constraintconc}
	\int_0^1 x^{2\nu -1}\,v_2(x)\,dx = 0 \,.
\end{equation}

\vspace{0.2cm}\noindent
Our first injectivity result for the Fr\'echet differential in the AKNS setting
is based on the classical M\"untz--Sz\'asz theorem
\cite{Muntz12,Muntz14,Szasz16}.

\begin{thm}\label{thm:muntz-AKNS}
Let $(v_1,v_2)\in L^2(0,1)^2$ be a real-valued vector function satisfying
the AKNS linearized constraints \eqref{eq:v1-constraintconc}--\eqref{eq:v2-constraintconc}
for an infinite increasing sequence $\{\kappa_k\}_{k\ge1}\subset\mathbb{N}^*$\,, with 
$\nu_k := \kappa_k + \tfrac12\, .$
Assume moreover that
\[
\sum_{k=1}^\infty \frac{1}{\kappa_k} = +\infty\,.
\]
Then $(v_1,v_2)=(0,0)$ almost everywhere in $(0,1)\,$.
\end{thm}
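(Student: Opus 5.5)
The proof will treat $v_2$ and $v_1$ separately. Both reduce to a Müntz--Szász completeness statement for a family of monomials in $L^2(0,1)$.

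\emph{The component $v_2$.} Constraint \eqref{eq:v2-constraintconc} says that for every $k$
\[
\int_0^1 x^{2\kappa_k}\,v_2(x)\,dx=0 .
\]
The exponents $\lambda_k:=2\kappa_k$ are distinct positive integers with $\sum_k 1/\lambda_k=\tfrac12\sum_k 1/\kappa_k=+\infty$. By the $L^2$ form of the Müntz--Szász theorem, the span of $\{x^{\lambda_k}\}$ is dense in $L^2(0,1)$ as soon as $\sum 1/\lambda_k=\infty$; no constant function is needed in the $L^2$ setting. Hence $v_2\perp \overline{\operatorname{span}}\{x^{2\kappa_k}\}=L^2(0,1)$, so $v_2=0$ a.e.

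\emph{The component $v_1$.} Constraint \eqref{eq:v1-constraintconc}, with $2\nu_k=2\kappa_k+1$, gives
\[
\int_0^1 v_1\,dx=2\int_0^1 x^{2\kappa_k+1}\,v_1(x)\,dx \quad\text{for all } k .
\]
Call the common value $c:=\int_0^1 v_1$. Then $\int_0^1 x^{2\kappa_k+1}\,v_1\,dx=c/2$ is independent of $k$. Since $\kappa_k\to\infty$ and $v_1\in L^2\subset L^1$, the quantity $\int_0^1 x^{m}\,v_1\,dx$ tends to $0$ as $m\to\infty$ by dominated convergence. Letting $k\to\infty$ therefore gives $c=0$. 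Consequently $v_1$ is orthogonal to $x^{2\kappa_k+1}$ for all $k$. Applying Müntz--Szász again with exponents $2\kappa_k+1$, where $\sum 1/(2\kappa_k+1)=\infty$ because $2\kappa_k+1\le 3\kappa_k$, we obtain $v_1=0$ a.e.

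The only delicate point is the elimination of the constant $c$ in the $v_1$ constraint, since the relation is affine rather than a pure orthogonality. The limit argument above handles it. One also has to make sure the Müntz theorem is invoked in its $L^2(0,1)$ version for exponents tending to infinity, namely the Müntz condition $\sum 1/\lambda_k=\infty$ with $\lambda_k>-1/2$. This applies directly here and does not require the constant function.
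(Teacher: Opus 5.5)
Your proof is correct and follows the paper's argument. You let $k\to\infty$ in the $v_1$ constraint to get $\int_0^1 v_1=0$, reduce both components to the vanishing moments $\int_0^1 x^{2\kappa_k+1}v_1=\int_0^1 x^{2\kappa_k}v_2=0$, and conclude with the M\"untz--Sz\'asz theorem; your two additions, dominated convergence for the limit and the remark that the $L^2$ version of M\"untz--Sz\'asz needs no constant function, just make explicit steps the paper leaves implicit.
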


\begin{proof}
From the identity
\begin{equation}\label{eq:moment-nu-k}
\int_0^1 \bigl(1-2x^{2\nu_k}\bigr)\,v_1(x)\,dx = 0\,,
\end{equation}
we let $k\to\infty$ and we get  
\[
\int_0^1 v_1(x)\,dx = 0\,.
\]
Substituting this back into \eqref{eq:moment-nu-k}, we obtain the moment identities
\[
\int_0^1 x^{2\nu_k}\, v_1(x)\, dx = 0\,,
\qquad \text{for all } k\ge1\,.
\]
Since 
\[
\sum_{k=1}^\infty \frac{1}{\kappa_k}=+\infty\,,
\]
the classical M\"untz-Sz\'asz theorem applies to the family
$\{x^{2\nu_k}\}_{k\ge1}$ on $(0,1)$ and implies that  
\[
v_1 = 0 \quad \text{almost everywhere on }(0,1)\,.
\]
The argument for $v_2$ is identical, using the constraint 
\eqref{eq:v2-constraintconc}.
Hence $(v_1,v_2)=(0,0)$ a.e., which completes the proof.
\end{proof}

\subsection{Transformation operators and Green's identity}

We recall the definition of the transformation operators introduced in the work of Serier.
Such operators first appeared in the seminal paper of Guillot and Ralston
\cite{GR} in connection with the inverse spectral problem for the radial 
Schr\"odinger operator (the case $\kappa=1$). They were later extended to general
integer $\kappa$ by Rundell and Sacks \cite{RuSa01}, and subsequently refined in 
\cite{Ser07}. 

\medskip
\noindent
In the AKNS setting, Serier constructed similar operators adapted to the
first-order matrix structure. A key difference with the Schr\"odinger case is
that the inverse operators have a more favorable structure.

\medskip
\noindent
Throughout this subsection we use the vector-valued functions
$\Phi_\kappa$ and $\Psi_\kappa$ introduced in the previous section, and
we keep the notation
\[
\Phi_\kappa(x)=
\begin{pmatrix}
	\Phi_{\kappa,1}(x)\\[0.2em]
	\Phi_{\kappa,2}(x)
\end{pmatrix}\,,
\qquad
\Psi_\kappa(x)=
\begin{pmatrix}
	\Psi_{\kappa,1}(x)\\[0.2em]
	\Psi_{\kappa,2}(x)
\end{pmatrix}\,.
\]

\medskip\noindent
The next  lemma is taken from \cite{Serier2006} and will be essential
for analyzing the inverse problem in the AKNS setting. First, let us give some notation\footnote{We adopt the same notation as that introduced by Serier~\cite{Serier2006}.}.
\begin{nota} For all $n\in\N\,$, let $U_n$ and
$V_n$ be defined by
$$U_n(x)=\Bigg[\begin{array}{c}
  0 \\
  x^n \\
\end{array}\Bigg]\quad\mathrm{ and }\quad
V_n(x)=\Bigg[\begin{array}{c}
  x^n \\
  0 \\
\end{array}\Bigg]\quad x\in[0,1]\,.$$
\end{nota}

\begin{lemma}\label{akns_lemme_reduction_indice}
For each $\kappa\in\mathbb{N}\,$, define the operator
\[
S_{\kappa+1}:\;L^2(0,1)^2\longrightarrow L^2(0,1)^2\,,
\qquad
S_{\kappa+1}[p,q]:=\bigl(S_{\kappa,1}[p],\,S_{\kappa,2}[q]\bigr)\,,
\]
where
\[
S_{\kappa,1}[p](x)=p(x)-2(2\kappa+1)x^{2\kappa}\!\int_x^1\frac{p(t)}{t^{2\kappa+1}}dt\,,
\qquad
S_{\kappa,2}[q](x)=q(x)-2(2\kappa+1)x^{2\kappa+1}\!\int_x^1\frac{q(t)}{t^{2\kappa+2}}dt\,.
\]
We also set $S_0:=\mathrm{Id}$.
The operators $\{S_\kappa\}_{\kappa\ge0}$ satisfy:
\begin{enumerate}[(i)]
\item The adjoint is given by
\[
S_{\kappa+1}^\ast[f,g]=\bigl(S_{\kappa,1}^\ast[f],\,S_{\kappa,2}^\ast[g]\bigr)\,,
\]
with
\[
S_{\kappa,1}^\ast[f](x)
= f(x)-\frac{2(2\kappa+1)}{x^{2\kappa+1}}\int_0^x t^{2\kappa}f(t)\,dt\,,
\qquad
S_{\kappa,2}^\ast[g](x)
= g(x)-\frac{2(2\kappa+1)}{x^{2\kappa+2}}\int_0^x t^{2\kappa+1}g(t)\,dt\,.
\]

\item The family $\{S_\kappa\}$ is commuting:
\[
S_{\kappa}S_{m}=S_{m}S_{\kappa}\qquad\forall\,\kappa,m\in\mathbb{N}\,.
\]

\item Each $S_\kappa$ is bounded on $L^2(0,1)^2$\,.

\item With $N_{\kappa+1}:=\ker S_{\kappa+1}^\ast$, one has
\[
N_{\kappa+1}=\mathrm{Vect}(U_{2\kappa},\,V_{2\kappa+1})\,.
\]

\item $S_{\kappa+1}$ is an isomorphism from $L^2(0,1)^2$ onto 
$N_{\kappa+1}^{\perp}$, with inverse
\[
A_{\kappa+1}[f,g]:=\bigl(S_{\kappa,2}^\ast[f],\,S_{\kappa,1}^\ast[g]\bigr)\,.
\]

\item The functions $\Phi_\kappa$ and $\Psi_\kappa$ satisfy the reduction relations
\[
\Phi_{\kappa+1}=-S_{\kappa+1}^\ast[\Phi_\kappa]\,,
\qquad
\Psi_{\kappa+1}=-S_{\kappa+1}^\ast[\Psi_\kappa]\,.
\]
\end{enumerate}
\end{lemma}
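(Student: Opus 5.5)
The plan is to reduce everything to scalar Hardy-type operators of the form
\[
T_a[h](x)=h(x)-c\,x^{a}\int_x^1\frac{h(t)}{t^{a+1}}\,dt\,,\qquad
T_a^\ast[h](x)=h(x)-\frac{c}{x^{a+1}}\int_0^x t^{a}h(t)\,dt\,,
\]
with $c=2(2\kappa+1)$ and $a=2\kappa$ for $S_{\kappa,1}$ and $a=2\kappa+1$ for $S_{\kappa,2}$. I will then treat the items one at a time.

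\textbf{Adjoint and boundedness, items (i) and (iii).} Item (i) follows from Fubini on $\{0<t<x<1\}$: one computes $\int_0^1 x^{a}g(x)\int_x^1 t^{-a-1}h(t)\,dt\,dx$ by exchanging the order of integration. For (iii) it suffices to bound the averaging operator $h\mapsto x^{-a-1}\int_0^x t^{a}h$ on $L^2(0,1)$. Its kernel $x^{-a-1}t^{a}\mathbf 1_{t<x}$ is homogeneous of degree $-1$, so Schur's test with weight $x^{-1/2}$ (or Hardy's inequality) gives a bound of order $1/(a+\tfrac12)$. Boundedness of $T_a$ then follows by duality.

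\textbf{Commutation, item (ii).} Since $S_{\kappa+1}$ acts diagonally, it is enough to show that $T_a$ and $T_b$ commute, with their respective constants. I would compute $T_aT_b$ explicitly. The composite $x^{a}\int_x^1 t^{-a-1}t^{b}\int_t^1 s^{-b-1}h(s)\,ds\,dt$ becomes, after Fubini, an integral against the kernel $\frac{x^{a}}{b-a}\bigl(s^{-a-1}-x^{b-a}s^{-b-1}\bigr)$, and the case $a=b$ is trivial. The total kernel of $T_aT_b-\mathrm{Id}$ is then a combination of $x^{a}s^{-a-1}$ and $x^{b}s^{-b-1}$ whose coefficients are visibly symmetric in $(a,c_a)\leftrightarrow(b,c_b)$. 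An alternative route is to check that both compositions act identically on monomials $x^{\mu}$ (each is a multiplier plus a boundary term) and conclude by density.

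\textbf{Kernels and inverse, items (iv) and (v).} To solve $T_a^\ast[h]=0$, I set $H(x)=\int_0^x t^{a}h$. The equation becomes $xH'=cH$, so $H=Cx^{c}$ and $h=Cx^{c-a-1}$. For $S_{\kappa,1}^\ast$ this gives $x^{2\kappa+1}$, and for $S_{\kappa,2}^\ast$ it gives $x^{2\kappa}$, both in $L^2$. This yields $N_{\kappa+1}=\mathrm{Vect}(U_{2\kappa},V_{2\kappa+1})$. For (v) I would verify directly, by the same Fubini/ODE computation, that $A_{\kappa+1}S_{\kappa+1}=\mathrm{Id}$, matching the components as in Serier's convention. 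This gives injectivity of $S_{\kappa+1}$ and closedness of its range. The range then equals $(\ker S_{\kappa+1}^\ast)^\perp=N_{\kappa+1}^\perp$.

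\textbf{Reduction relations, item (vi).} This is the step I expect to be the main computational obstacle. I rewrite $\Phi_{\kappa+1}=-S_{\kappa+1}^\ast[\Phi_\kappa]$ componentwise as
\[
x^{a+1}\bigl(\Phi_{\kappa,j}+\Phi_{\kappa+1,j}\bigr)(x)=c\int_0^x t^{a}\Phi_{\kappa,j}(t)\,dt\,.
\]
Both sides vanish at $0$ by the small-$x$ asymptotics \eqref{singularite}, so it suffices to compare derivatives. That comparison reduces to an identity among products of Bessel functions of orders $\nu-1,\nu,\nu+1$ (with $\nu+1$ appearing for $\kappa+1$). I would prove it using $(x^{\nu}J_\nu)'=x^{\nu}J_{\nu-1}$, $(x^{-\nu}J_\nu)'=-x^{-\nu}J_{\nu+1}$, and the three-term recurrence. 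The same computation works verbatim for $\Psi_\kappa$, since $Y_\nu$ satisfies the same recurrences. For $\Psi_\kappa$, however, the boundary term at $0$ must be checked separately, using the fact that the mixed products $J_\mu Y_{\mu'}$ have only the mild singularities allowed by \eqref{singularite}.
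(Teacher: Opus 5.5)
The paper gives no proof of this lemma; it quotes it from \cite{Serier2006}. Your proposal is therefore a self-contained verification rather than a reconstruction of an argument in the text, and as far as I can check it is correct.

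Reducing to $T_a=\mathrm{Id}-cK_a$ with $K_ah(x)=x^a\int_x^1 t^{-a-1}h$ makes (i)--(iv) routine:
\begin{itemize}
\item (i) and (iii) follow from Tonelli together with Schur's test.
\item (ii) follows because $K_aK_b$ has the kernel $\frac{1}{b-a}\bigl(x^as^{-a-1}-x^bs^{-b-1}\bigr)$, which is symmetric in $a,b$.
\item (iv) follows from the ODE $xH'=cH$, whose solutions give $x^{2\kappa+1}$ and $x^{2\kappa}$.
\end{itemize}

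Two places should be written out rather than waved at.

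\textbf{Item (v).} The phrase ``matching the components'' hides the real content: the inverse is \emph{cross-paired}, with $S^\ast_{\kappa,2}S_{\kappa,1}=\mathrm{Id}$ and $S^\ast_{\kappa,1}S_{\kappa,2}=\mathrm{Id}$. In your notation, set $G(x)=\int_x^1 t^{-a-1}h$. Then $t^bT_ah=-\bigl(t^{a+b+1}G\bigr)'$ holds exactly when $c=a+b+1$. The boundary term at $0$ vanishes because $|G(t)|\lesssim t^{-a-1/2}\|h\|$, and this gives $T_b^\ast T_a=\mathrm{Id}$. The condition $c=a+b+1$ holds for $(a,b)=(2\kappa,2\kappa+1)$ with $c=4\kappa+2$. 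The same-index pairing would instead need $c=2a+1$, which fails.

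\textbf{Item (vi).} For $\Psi_\kappa$, ``verbatim'' is really polarization. The $\Phi$ identities are quadratic in a cylinder function $\alpha J+\beta Y$, so they also hold in bilinear $J$--$Y$ form. For $j=1$ this reads $\Psi_{\kappa,1}+\Psi_{\kappa+1,1}=2\pi\nu J_\nu Y_\nu$ and $\Psi_{\kappa+1,1}-\Psi_{\kappa,1}=-\pi x\,(J_\nu Y_\nu)'$. These two formulas give at once:
\begin{itemize}
\item the derivative identity $x\bigl(\Psi_{\kappa,1}+\Psi_{\kappa+1,1}\bigr)'+2\nu\bigl(\Psi_{\kappa+1,1}-\Psi_{\kappa,1}\bigr)=0$;
\item the vanishing of $x^{2\kappa+1}\bigl(\Psi_{\kappa,1}+\Psi_{\kappa+1,1}\bigr)$ at $0$.
\end{itemize}
For $j=2$ the same pattern holds, with the sum equal to $\pi\nu\,(J_\nu Y_\nu)'$, and Bessel's equation enters.

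Your route checks every item, including the index bookkeeping in $A_{\kappa+1}$. The later kernel computations rely on that bookkeeping when they recover $v_2=S^\ast_{0,1}[f]$ and $v_1=S^\ast_{0,2}[f]$. The paper's citation is shorter but leaves this unverified in the text.
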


\vspace{0.3cm}\noindent
We will also need the following complementary result which is analogous
to Lemma 3.4 in \cite{RuSa01}. 

\vspace{0.2cm}

\begin{lemma}\label{ODESl}
Let $\kappa\ge 0$ and let $f,g\in L^2(0,1)$\,. Then:

\begin{enumerate}
\item If $g = S_{\kappa,1}[f]\,$, then in the sense of distributions on $(0,1)\,$,
\begin{equation}\label{eq:ODEsl1}
g^{(2\kappa+1)}(x)
= \frac{4\kappa+2}{x}\,f^{(2\kappa)}(x) + f^{(2\kappa+1)}(x)\,.
\end{equation}

\item If $g = S_{\kappa,2}[f]\,$, then in the sense of distributions on $(0,1)$,
\begin{equation}\label{eq:ODEsl2}
g^{(2\kappa+2)}(x)
= \frac{4\kappa+2}{x}\,f^{(2\kappa+1)}(x) + f^{(2\kappa+2)}(x)\,.
\end{equation}
\end{enumerate}
\end{lemma}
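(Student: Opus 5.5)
The plan is to eliminate the nonlocal integral by passing to the auxiliary primitive, so that the claim becomes an identity between \emph{local} differential operators with homogeneous (Euler-type) coefficients. Such identities can be checked on monomials $x^s$.

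For item 1, set
\[
F(x):=\int_x^1 \frac{f(t)}{t^{2\kappa+1}}\,dt\,, \qquad x\in(0,1)\,.
\]
Since $f\in L^2(0,1)$, $F$ is absolutely continuous on every compact subinterval of $(0,1)$, and $F'=-x^{-2\kappa-1}f$ in $\mathcal D'(0,1)$. Hence $f=-x^{2\kappa+1}F'$, and therefore
\[
g=S_{\kappa,1}[f]=-x^{2\kappa+1}F'-2(2\kappa+1)x^{2\kappa}F\,.
\]
The coefficients are smooth on $(0,1)$, so Leibniz' rule is valid in the sense of distributions. Both sides of \eqref{eq:ODEsl1} then become linear differential operators of order $2\kappa+2$ acting on $F$, namely
\[
L_1F:=D^{2\kappa+1}\bigl(-x^{2\kappa+1}F'-2(2\kappa+1)x^{2\kappa}F\bigr)
\]
and
\[
L_2F:=\frac{4\kappa+2}{x}D^{2\kappa}\bigl(-x^{2\kappa+1}F'\bigr)+D^{2\kappa+1}\bigl(-x^{2\kappa+1}F'\bigr)\,.
\]
Each of these maps $x^s$ to a constant multiple of $x^{s-2\kappa-1}$. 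Equivalently, after multiplication by $x^{2\kappa+1}$, both are polynomials in the Euler operator $x\,d/dx$. An Euler-type operator is determined by its symbol, that is, by the polynomial $P(s)$ with $L(x^s)=P(s)x^{s-2\kappa-1}$. So it suffices to check $L_1(x^s)=L_2(x^s)$ for all $s$.

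The monomial check is a short computation. It is convenient to run it in the equivalent form where one takes $f=x^s$ with $s$ generic. Then $F=(x^{s-2\kappa}-1)/(s-2\kappa)$ and
\[
g=\frac{s+2\kappa+2}{s-2\kappa}\,x^s+c\,x^{2\kappa}
\]
for some constant $c$. The term $x^{2\kappa}$ is annihilated by $D^{2\kappa+1}$, and this is exactly why the order $2\kappa+1$ appears. One obtains
\[
g^{(2\kappa+1)}=s(s-1)\cdots(s-2\kappa+1)\,(s+2\kappa+2)\,x^{s-2\kappa-1}\,.
\]
This coincides with
\[
\frac{4\kappa+2}{x}(x^s)^{(2\kappa)}+(x^s)^{(2\kappa+1)}=s\cdots(s-2\kappa+1)\bigl[(4\kappa+2)+(s-2\kappa)\bigr]x^{s-2\kappa-1}\,.
\]
Equality for all $s$ gives the polynomial identity of symbols, hence $L_1=L_2$ as differential operators on $(0,1)$, hence \eqref{eq:ODEsl1} in $\mathcal D'(0,1)$.

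Item 2 is identical. One sets $F=\int_x^1 t^{-2\kappa-2}f\,dt$, so that $g=-x^{2\kappa+2}F'-2(2\kappa+1)x^{2\kappa+1}F$. On monomials $g=\frac{s+2\kappa+3}{s-2\kappa-1}x^s+c\,x^{2\kappa+1}$, the second term is killed by $D^{2\kappa+2}$, and the same symbol comparison gives \eqref{eq:ODEsl2}.

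The only point requiring care, which I expect to be the main (mild) obstacle, is the justification that checking on $x^s$ suffices, together with the distributional manipulations. For the first, I would state it as the elementary fact that a differential operator $\sum_j a_j x^{j-m}D^j$ is zero if it annihilates all $x^s$. Its symbol is a polynomial in $s$ vanishing identically, and the coefficients $a_j$ are recovered from the falling-factorial basis. For the second, I would note that all coefficients are $C^\infty$ on $(0,1)$ and that $F\in W^{1,1}_{\mathrm{loc}}(0,1)$, so all products and derivatives are legitimate in $\mathcal D'(0,1)$.
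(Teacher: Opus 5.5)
Your approach is sound and genuinely different from the paper's. The paper never introduces a primitive. It differentiates $g=S_{\kappa,1}[f]$ once and eliminates the integral by forming $2\kappa g-xg'=-(2\kappa+2)f-xf'$. Differentiating this $k$ times gives $(2\kappa-k)g^{(k)}-xg^{(k+1)}=-(2\kappa+k+2)f^{(k)}-xf^{(k+1)}$, and at $k=2\kappa$ the coefficient of $g^{(k)}$ vanishes, which is the claimed identity. Item 2 is analogous, starting from $(2\kappa+1)g-xg'=-(2\kappa+1)f-xf'$ and taking $k=2\kappa+1$. That argument derives the formula rather than checking it. It explains the order $2\kappa+1$ through the vanishing coefficient, and it needs no uniqueness statement for Euler operators. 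Yours is a verification, so you must know the target in advance. In exchange it exposes the mechanism cleanly. On monomials $S_{\kappa,1}$ acts as the multiplier $\frac{s+2\kappa+2}{s-2\kappa}$, up to the endpoint term $c\,x^{2\kappa}$, which $D^{2\kappa+1}$ kills. The pole at $s=2\kappa$ is then cancelled by the falling factorial.

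Several computations need fixing, one of them substantively.

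In item 1, for $f=x^s$ one has $F=\frac{1-x^{s-2\kappa}}{s-2\kappa}$. Your sign is reversed, although your formula for $g$ is the one that comes from the correct $F$.

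As operators on $F$, $L_1$ and $L_2$ send $x^r$ to multiples of $x^{r-1}$, not $x^{r-2\kappa-1}$; that exponent is right only in the parametrization $f=x^s$. After Leibniz expansion both have the form $\sum_{j\ge1}a_jx^{j-1}D^j$, so it is $xL_i$, not $x^{2\kappa+1}L_i$, that is a polynomial in $x\,d/dx$. With $m=1$ in your closing lemma the argument is unaffected. Testing with $f=x^s$ instead of $F=x^r$ is also legitimate, because both $L_1$ and $L_2$ annihilate constants.

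The real error is in item 2. There $F=\frac{1-x^{s-2\kappa-1}}{s-2\kappa-1}$ and $g=\frac{s+2\kappa+1}{s-2\kappa-1}\,x^s+c\,x^{2\kappa+1}$, since $(s-2\kappa-1)+2(2\kappa+1)=s+2\kappa+1$, not $s+2\kappa+3$. With your coefficient the ``same symbol comparison'' fails. You would get $g^{(2\kappa+2)}=(s+2\kappa+3)\,s(s-1)\cdots(s-2\kappa)\,x^{s-2\kappa-2}$, whereas $\frac{4\kappa+2}{x}(x^s)^{(2\kappa+1)}+(x^s)^{(2\kappa+2)}=(s+2\kappa+1)\,s(s-1)\cdots(s-2\kappa)\,x^{s-2\kappa-2}$. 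With the corrected coefficient the two sides agree and item 2 follows. This step has to be computed, not asserted by analogy with item 1.
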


\begin{proof}
We adapt the argument of \cite{RuSa01} for the first identity \eqref{eq:ODEsl1}.  
Starting from
\begin{equation}\label{eq:g}
g(x)
= f(x)
- 2(2\kappa+1)\, x^{2\kappa} \int_x^{1} s^{-2\kappa-1} f(s)\,ds \,,
\end{equation}
a single differentiation yields
\begin{equation}\label{eq:gprime}
g'(x)
= f'(x)
- 4\kappa(2\kappa+1)\, x^{2\kappa-1}
    \int_x^{1} s^{-2\kappa-1} f(s)\, ds
+ \frac{2(2\kappa+1)}{x} f(x)\,.
\end{equation}
To eliminate the integral term, consider
\[
2\kappa\,\eqref{eq:g} \;-\; x \eqref{eq:gprime}\,,
\]
which gives
\[
2\kappa g(x) - x g'(x)
= - (2\kappa+2) f(x) - x f'(x)\,.
\]
Differentiating once more,
\[
(2\kappa-1) g'(x) - x g''(x)
= - (2\kappa+3) f'(x) - x f''(x)\,,
\]
and by iterating this procedure $k$ times one obtains
\[
(2\kappa - k)\, g^{(k)}(x) - x g^{(k+1)}(x)
= - (2\kappa+k+2)\, f^{(k)}(x) - x f^{(k+1)}(x)\,.
\]
Setting $k = 2\kappa$ and dividing by $x$ yields exactly \eqref{eq:ODEsl1}.  
The proof of \eqref{eq:ODEsl2} is entirely analogous.
\end{proof}

\vspace{0.2cm}\noindent
We now consider the composite operator \(T_\kappa\), obtained by composing the index-reduction operators \(S_1,\dots,S_\kappa\,\), which carries Bessel kernels to trigonometric ones.

\medskip

\begin{lemma}\label{akns-akns_lemme_bessel_sinus}
For every $\kappa\in\mathbb{N}\,$, define
\[
T_\kappa=(-1)^{\kappa+1}S_\kappa S_{\kappa-1}\cdots S_1\,,
\qquad
T_0:=-S_0\,.
\]
Write $T_\kappa[p,q]=(T_\kappa^1[p],\,T_\kappa^2[q])\,$. Then:
\begin{enumerate}[(i)]
\item $T_\kappa$ is bounded and injective, and for all $p,q$ and all $\lambda\in\mathbb{C}\,$,
\[
\int_0^1\Phi_\kappa(\lambda t)\cdot\binom{p(t)}{q(t)}\,dt
=
\int_0^1
\binom{\sin(2\lambda t)}{\cos(2\lambda t)}\cdot T_\kappa[p,q](t)\,dt\,,
\]
\[
\int_0^1\Psi_\kappa(\lambda t)\cdot\binom{p(t)}{q(t)}\,dt
=
\int_0^1
\binom{\cos(2\lambda t)}{-\sin(2\lambda t)}\cdot T_\kappa[p,q](t)\,dt\,.
\]

\item The adjoint $T_\kappa^\ast$ satisfies 
\[
\Phi_\kappa(\lambda x)
= T_\kappa^\ast
\binom{\sin(2\lambda \cdot)}{\cos(2\lambda \cdot)}(x),
\qquad
\Psi_\kappa(\lambda x)
= T_\kappa^\ast
\binom{\cos(2\lambda \cdot)}{-\sin(2\lambda \cdot)}(x).
\]

and
\[
\ker T_\kappa^\ast=\bigoplus_{k=1}^\kappa N_k\,.
\]

\item $T_\kappa$ defines an isomorphism from $L^2(0,1)^2$ onto 
$\bigl(\bigoplus_{k=1}^\kappa N_k\bigr)^\perp$, with inverse
\[
B_\kappa[f,g]=\bigl((T_\kappa^2)^\ast[f],\,(T_\kappa^1)^\ast[g]\bigr)\,.
\]
\end{enumerate}
\end{lemma}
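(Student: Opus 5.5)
Since $T_\kappa=(-1)^{\kappa+1}S_\kappa\cdots S_1$ is a composition of the operators of Lemma~\ref{akns_lemme_reduction_indice}, the plan is to argue by induction on $\kappa$. The base case is a direct computation with half-integer Bessel functions, and the induction step is supplied by the reduction relations (vi).

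\emph{Base case $\kappa=0$.} Here $T_0=-\mathrm{Id}$. Using \eqref{eq:HalfInt} with $\nu=\tfrac12$, we have $\frac{\pi x}{2}J_{1/2}(x)^2=\sin^2 x$, $\frac{\pi x}{2}J_{-1/2}(x)^2=\cos^2 x$ and $\frac{\pi x}{2}J_{-1/2}(x)J_{1/2}(x)=\sin x\cos x$. Hence
\[
\Phi_0(x)=\binom{-\sin 2x}{-\cos 2x}=-\binom{\sin 2x}{\cos 2x}\,.
\]
Likewise, $Y_{1/2}=-J_{-1/2}$ and $Y_{-1/2}=J_{1/2}$, so that
\[
\Psi_0(x)=\binom{\cos^2x-\sin^2x}{\sin x\cos x+\sin x\cos x}=-\binom{\cos 2x}{-\sin 2x}\,.
\]
Both identities in (ii) therefore hold with $T_0^\ast=-\mathrm{Id}$, and (i) follows from them by duality.

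\emph{Induction step.} By (vi), $\Phi_{\kappa+1}=-S_{\kappa+1}^\ast\Phi_\kappa$, applied here to $x\mapsto\Phi_\kappa(\lambda x)$. This dilated form needs a check: $S_{\kappa+1}^\ast$ commutes with dilations $x\mapsto\lambda x$, since its kernel is homogeneous of degree $-1$, so the relation at $\lambda=1$ transfers to every $\lambda$. We then get
\[
\Phi_{\kappa+1}(\lambda\cdot)=-S_{\kappa+1}^\ast T_\kappa^\ast\binom{\sin 2\lambda\cdot}{\cos 2\lambda\cdot}\,.
\]
Since $T_\kappa^\ast=(-1)^{\kappa+1}S_1^\ast\cdots S_\kappa^\ast$, the commutativity (ii) gives $-S_{\kappa+1}^\ast T_\kappa^\ast=T_{\kappa+1}^\ast$, and the same argument applies to $\Psi$. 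Statement (i) is then (ii) written through $\langle T^\ast u,v\rangle=\langle u,Tv\rangle$, first for real $\lambda$ and then for all $\lambda\in\C$ by analyticity of both sides in $\lambda$.

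\emph{Boundedness, injectivity and the kernel.} Boundedness of $T_\kappa$ follows from (iii), and injectivity from (v), since each $S_k$ is an isomorphism onto its range. The kernel statement $\ker T_\kappa^\ast=\bigoplus_{k\le\kappa}N_k$ is the delicate step. One needs the following: if $S_1^\ast\cdots S_\kappa^\ast u=0$, then, peeling off one factor at a time with commutativity and (iv), $u$ is a combination of the $U_{2k-2}$ and $V_{2k-1}$. The key check is that $S_k^\ast$ maps $N_j$ into $N_j$ for $j\ne k$, with $S_k^\ast$ invertible on $N_j$. This follows by computing $S^\ast_{k-1,i}$ on monomials, which gives $S^\ast_{k-1,1}x^m=(1-\tfrac{2(2k-1)}{m+2k})x^m$, nonzero unless $m=2k-2$. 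So $\ker(S_k^\ast\cdots)$ splits as the direct sum, and the sum is direct because the monomials are distinct. Equivalently, $\operatorname{Ran}T_\kappa$ is closed and equals $\bigl(\bigoplus N_k\bigr)^\perp$.

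\emph{Inverse formula (iii).} The inverse is obtained by composing the inverses $A_k$ from (v). Since $A_k$ swaps components and uses adjoints of the opposite component, composing $\kappa$ of them in reverse order and using commutativity produces $\bigl((T_\kappa^2)^\ast,(T_\kappa^1)^\ast\bigr)$. The sign $(-1)^{\kappa+1}$ must be tracked: it squares out only up to a global sign, so this needs care and possibly a convention check. I expect this bookkeeping, together with the invariance of the $N_j$ needed for the kernel identity, to be the main obstacle.
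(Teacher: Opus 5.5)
Your proof is correct. The paper states this lemma without proof, as part of the material imported from Serier, so there is no argument of its own to compare with. Your route is the natural one: the explicit $\kappa=0$ computation, then induction through the reduction relations (vi) using $T_{\kappa+1}^\ast=-S_{\kappa+1}^\ast T_\kappa^\ast$ and commutativity.

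The two steps you single out as obstacles are in fact immediate.

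\emph{Kernel.} $S_k^\ast$ acts on $N_j$ as the scalar $\frac{j-k}{j+k-1}$ on both components. This gives at once $\bigoplus_{j\le\kappa}N_j\subset\ker T_\kappa^\ast$ and the invertibility on $N_j$ ($j\neq k$) used in your peeling argument. Incidentally, the multiplier you display is that of $S^\ast_{k-1,2}$, not $S^\ast_{k-1,1}$; the kernel of the latter is spanned by $x^{2k-1}$.

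\emph{Inverse (iii).} $A_k$ does not swap components. By (v), $S^\ast_{j,2}S_{j,1}=\mathrm{Id}$ and $S^\ast_{j,1}S_{j,2}=\mathrm{Id}$, so telescoping gives
\[
(T_\kappa^2)^\ast T_\kappa^1=(-1)^{2(\kappa+1)}S^\ast_{0,2}\cdots S^\ast_{\kappa-1,2}\,S_{\kappa-1,1}\cdots S_{0,1}=\mathrm{Id},
\]
and similarly for the other component. The sign cancels exactly, and no commutativity is needed, since taking adjoints already reverses the order.

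\emph{Closed range.} The kernel identity alone only gives $\overline{\operatorname{Ran}T_\kappa}=\bigl(\bigoplus N_k\bigr)^\perp$, so closedness is not ``equivalent'' to it. Closedness comes from $B_\kappa T_\kappa=\mathrm{Id}$ with $B_\kappa$ bounded, so the isomorphism claim should be placed after the inverse formula.

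Two small fixes elsewhere. In the base case, $\frac{\pi x}{2}\bigl(J_{-1/2}Y_{1/2}+J_{1/2}Y_{-1/2}\bigr)=\sin^2x-\cos^2x$; your intermediate display has the opposite sign, although your final $\Psi_0$ is right. Second, dilating (vi) by $\lambda>1$ or by complex $\lambda$ uses (vi) outside $(0,1)$. This is harmless: $S^\ast_{\kappa+1}$ acts diagonally on monomials and $\Phi_\kappa,\Psi_\kappa$ are entire for half-integer $\nu$. Hence both sides of (vi) are entire functions agreeing on $(0,1)$, and $S^\ast_{\kappa+1}$ commutes with every complex dilation on entire functions.
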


\begin{rem}\label{separation}
Taking, for instance, \((p,q)=(p,0)\) in Lemma~\ref{akns-akns_lemme_bessel_sinus}(i),
we obtain that, for every \(p\) and every \(\lambda\in\mathbb{C}\),
\[
\int_0^1 \Phi_{\kappa,1}(\lambda t)\, p(t)\,dt
=
\int_0^1 \sin(2\lambda t)\, T_{\kappa}^1(p)(t)\,dt\,.
\]
\end{rem}


\vspace{0.4cm}\noindent
We now apply Lemma~\ref{akns-akns_lemme_bessel_sinus}(i).  
Using the classical identity
\[
Y_\nu(x)=(-1)^{\,\kappa+1}\,J_{-\nu}(x)\,,
\]
Proposition~\ref{prop:prelim-identities} can be rewritten in the following equivalent form: for all $z\in\C^*\,$,
\begin{equation}\label{eq:system-Tell}
\left\{
\begin{aligned}
&\int_0^1\Bigl[
J_\nu(z)\cos(2zx)\,T_\kappa^1[v_1](x)
+(-1)^{\kappa+1}J_{-\nu}(z)\sin(2zx)\,T_\kappa^1[v_1](x)
\Bigr]\,dx=0\,,\\[0.6em]
&\int_0^1\Bigl[
-\,J_\nu(z)\sin(2zx)\,T_\kappa^2[v_2](x)
+(-1)^{\kappa+1}J_{-\nu}(z)\cos(2zx)\,T_\kappa^2[v_2](x)
\Bigr]\,dx=0\,.
\end{aligned}
\right.
\end{equation}


\vspace{0.2cm}\noindent
For later use, we recall the explicit formulas for Bessel functions of
half-integer order together with the associated polynomials introduced
in \cite[10.1.19--20]{AS64}.  
When $\kappa=0,1,2,\dots$ and $z\in\C$, one has the classical representations
\begin{align}
J_{\kappa+\tfrac12}(z)
&=
\sqrt{\frac{2}{\pi z}}\,
\Bigl(
   P_\kappa\!\left(\tfrac1z\right)\,\sin z
   \;-\;
   Q_{\kappa-1}\!\left(\tfrac1z\right)\,\cos z
\Bigr)\,,
\label{besseldemientier1}
\\[0.3em]
J_{-\kappa-\tfrac12}(z)
&=
(-1)^{\kappa}\sqrt{\frac{2}{\pi z}}\,
\Bigl(
   P_\kappa\!\left(\tfrac1z\right)\,\cos z
   \;+\;
   Q_{\kappa-1}\!\left(\tfrac1z\right)\,\sin z
\Bigr)\,.
\label{besseldemientier2}
\end{align}

\vspace{0.15cm}\noindent
The polynomials \(P_\kappa\) and \(Q_\kappa\), each of degree~\(\kappa\), are generated by the three-term recurrences 
\begin{align}
P_{\kappa+1}(t)
&=(2\kappa+1)\,t\,P_\kappa(t)-P_{\kappa-1}(t)\,,
\qquad \kappa\ge1\,,
\\
Q_{\kappa+1}(t)
&=(2\kappa+3)\,t\,Q_\kappa(t)-Q_{\kappa-1}(t)\,,
\qquad \kappa\ge0\,,
\end{align}
with initial values
\[
P_0(t)=1\,,\qquad P_1(t)=t\,,
\qquad
Q_{-1}(t)=0\,,\qquad Q_0(t)=1\,.
\]

\medskip\noindent
Observe that \(P_\kappa\) and \(Q_\kappa\) inherit the parity of~\(\kappa\):
they are even functions when \(\kappa\) is even and odd functions when \(\kappa\) is odd.

\vspace{0.15cm}\noindent
For illustration, the lowest half-integer orders give
\begin{equation}\label{bessel12}
J_{\tfrac12}(z)=\sqrt{\frac{2}{\pi z}}\,\sin z\,,
\qquad
J_{-\tfrac12}(z)=\sqrt{\frac{2}{\pi z}}\,\cos z\,.
\end{equation}
The next pair is
\begin{equation}\label{bessel32}
J_{\tfrac32}(z)
=\sqrt{\frac{2}{\pi z}}\Bigl(\frac{\sin z}{z}-\cos z\Bigr)\,,
\qquad
J_{-\tfrac32}(z)
=\sqrt{\frac{2}{\pi z}}\Bigl(-\frac{\cos z}{z}-\sin z\Bigr)\,.
\end{equation}

\vspace{0.15cm}\noindent
Using the recurrence relation, the first few polynomials are
\begin{equation}\label{polynome}
\begin{aligned}
P_0(t)&=1\,,         &\qquad Q_{-1}(t)&=0\,,\\
P_1(t)&=t\,,         &\qquad Q_0(t)&=1\,,\\
P_2(t)&=3t^2-1\,,    &\qquad Q_1(t)&=3t\,,\\
P_3(t)&=15t^3-6t\,,  &\qquad Q_2(t)&=15t^2-1\,.
\end{aligned}
\end{equation}

\vspace{0.2cm}\noindent
Gathering the previous identities, we arrive at the following statement.

\begin{prop}\label{lelemmedebase}
Assume that for $\kappa \in \N$,
\[
D_{(p,q)}\lambda_{\kappa,n} (0,0)\cdot (v_1, v_2)=0\,,
\qquad\text{for all } n\in\mathbb{Z}\,.
\]
Then, for every \(z\in\C\) and every integer \(\kappa\ge0\)\,, one obtains  the following  identity: for all $z\in\C^*$,
\begin{equation}\label{eq:system-integraledebase}
\left\{
\begin{aligned}
&\int_0^1
\Bigl[
\Bigl(
P_\kappa\!\bigl(\tfrac1z\bigr)\,\sin\!\bigl(z(2x-1)\bigr)
+
Q_{\kappa-1}\!\bigl(\tfrac1z\bigr)\,\cos\!\bigl(z(2x-1)\bigr)
\Bigr)
\,T_\kappa^1[v_1](x)
\Bigr]\,dx
=0,\\[0.6em]
&\int_0^1
\Bigl[
\Bigl(
P_\kappa\!\bigl(\tfrac1z\bigr)\,\cos\!\bigl(z(2x-1)\bigr)
-
Q_{\kappa-1}\!\bigl(\tfrac1z\bigr)\,\sin\!\bigl(z(2x-1)\bigr)
\Bigr)
\,T_\kappa^2[v_2](x)
\Bigr]\,dx
=0.
\end{aligned}
\right.
\end{equation}

\end{prop}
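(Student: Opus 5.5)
The plan is to start from the system \eqref{eq:system-Tell}, which Proposition~\ref{prop:prelim-identities} and Lemma~\ref{akns-akns_lemme_bessel_sinus}(i) give for all $z\in\C^*$. We then substitute the half-integer representations \eqref{besseldemientier1}--\eqref{besseldemientier2} for $J_\nu(z)$ and $J_{-\nu}(z)$, with $\nu=\kappa+\tfrac12$. The common factor $\sqrt{2/(\pi z)}$ is nonzero for $z\in\C^*$ and can be divided out. The sign $(-1)^{\kappa+1}$ in front of $J_{-\nu}$ combines with the sign $(-1)^\kappa$ in \eqref{besseldemientier2} to give an overall $-1$. In the first line we therefore obtain the bracket
\[
\bigl(P_\kappa\sin z-Q_{\kappa-1}\cos z\bigr)\cos(2zx)-\bigl(P_\kappa\cos z+Q_{\kappa-1}\sin z\bigr)\sin(2zx),
\]
where $P_\kappa$ and $Q_{\kappa-1}$ are evaluated at $1/z$.

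Next we regroup by $P_\kappa$ and $Q_{\kappa-1}$ and apply the addition formulas. The $P_\kappa$ terms give
\[
\sin z\cos(2zx)-\cos z\sin(2zx)=\sin\bigl(z(1-2x)\bigr)=-\sin\bigl(z(2x-1)\bigr).
\]
The $Q_{\kappa-1}$ terms give
\[
-\bigl(\cos z\cos(2zx)+\sin z\sin(2zx)\bigr)=-\cos\bigl(z(2x-1)\bigr).
\]
Multiplying the whole identity by $-1$ yields the first line of \eqref{eq:system-integraledebase}.

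The second line of \eqref{eq:system-Tell} is handled the same way. Its bracket becomes
\[
-\bigl(P_\kappa\sin z-Q_{\kappa-1}\cos z\bigr)\sin(2zx)-\bigl(P_\kappa\cos z+Q_{\kappa-1}\sin z\bigr)\cos(2zx).
\]
Here the $P_\kappa$ terms give $-\cos(z(2x-1))$, and the $Q_{\kappa-1}$ terms give $\cos z\sin(2zx)-\sin z\cos(2zx)=\sin(z(2x-1))$. Multiplying by $-1$ again gives the second line of \eqref{eq:system-integraledebase}.

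The only delicate point is sign bookkeeping: the identity $Y_\nu=(-1)^{\kappa+1}J_{-\nu}$, the sign in \eqref{besseldemientier2}, and the orientation of the $\Psi$ and $\Phi$ kernels in Lemma~\ref{akns-akns_lemme_bessel_sinus}(i). To guard against errors we will check the case $\kappa=0$ directly. There $P_0=1$ and $Q_{-1}=0$, so the claim reduces to the vanishing of $\int_0^1\sin(z(2x-1))T^1_0[v_1]\,dx$ and $\int_0^1\cos(z(2x-1))T^2_0[v_2]\,dx$. This is consistent with the parity conditions found in Section~\ref{kappaegal0}: $v_1$ even and $v_2$ odd with respect to $x=\tfrac12$.
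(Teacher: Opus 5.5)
Your proposal is correct and follows the paper's route exactly. The paper also obtains \eqref{eq:system-integraledebase} directly from \eqref{eq:system-Tell} by inserting the half-integer representations \eqref{besseldemientier1}--\eqref{besseldemientier2} and applying the trigonometric addition formulas; you simply write out the sign bookkeeping the paper leaves implicit (the combined factor $(-1)^{\kappa+1}(-1)^{\kappa}=-1$ and the final multiplication by $-1$), and it checks out in both lines.
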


\begin{proof}
The identity follows directly from \eqref{eq:system-Tell} together with
the half-integer representations \eqref{besseldemientier1}-\eqref{besseldemientier2}, 
after rewriting the products of Bessel functions using 
elementary trigonometric relations. 
\end{proof}

\medskip\noindent
We now introduce the sequence of polynomials
\( \{A_\kappa(t)\}_{\kappa \in \mathbb{N}} \)\,,
defined recursively by
\[
A_0(t) = 1,
\qquad
A_1(t) = 1 - \frac{t}{2}\,,
\]
and, for all \( \kappa \geq 1 \)\,,
\[
A_{\kappa+1}(t)
= (2\kappa + 1)\,A_\kappa(t)
+ \frac{t^2}{4}\,A_{\kappa-1}(t)\,.
\]

\begin{rem}
The first polynomials of the sequence beyond \(A_1\) are explicitly given by
\[
A_2(t) = \tfrac{1}{4}t^2 - \tfrac{3}{2}t + 3\,,
\qquad
A_3(t) = -\tfrac{1}{8}t^3 + \tfrac{3}{2}t^2
        - \tfrac{15}{2}t + 15\,.
\]
\end{rem}

\medskip\noindent
The second equation of the system~\eqref{eq:system-integraledebase}
coincides with the equation already studied in
\cite[Proposition~5.1]{GGHN2025}\footnote{In the case $q=-m$, where $m$ is a constant interpreted as a mass, the AKNS system is closely related to a scalar Schr\"odinger equation (see \cite[Eq.~(1.4)]{AHM} and Appendix~A (\emph{Open problems}) of the present paper). Consequently, the analysis reduces to a second-order Schr\"odinger-type problem already studied in \cite{GGHN2025}.}.
We may therefore directly invoke
\cite[Theorem~6.6]{GGHN2025}.
The first equation of the system~\eqref{eq:system-integraledebase}
can be handled in the same way, by closely following the proof of
\cite[Theorem~6.6]{GGHN2025}. We therefore
obtain the following result, where \( D = \frac{d}{dx} \)\,.

\begin{thm}\label{mainthm}
	Let $(v_1, v_2) \in L^2(0,1)^2 \,$. Assume that, for some \( \kappa \in \N \),
	\[
	D_{(p,q)} \lambda_{\kappa,n}(0,0)\cdot (v_1, v_2) = 0,
	\qquad \text{for all } n \in \mathbb{Z}.
	\]
	Then, in the sense of distributions, the functions
	\[
	A_\kappa\bigl(D\bigr)\bigl[T_\kappa^{j}[v_j]\bigr],
	\qquad j \in \{1,2\},
	\]
	are even for \( j = 1 \) and odd for \( j = 2 \)
	with respect to the midpoint \( x = \tfrac{1}{2} \).
\end{thm}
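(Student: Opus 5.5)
Starting point: Proposition~\ref{lelemmedebase}. Set $f_j:=T_\kappa^j[v_j]\in L^2(0,1)$ and recenter with $y=2x-1\in(-1,1)$, $g_j(y):=f_j\bigl(\tfrac{1+y}{2}\bigr)$. Split $g_j=g_j^e+g_j^o$ into even and odd parts. Then $\int\sin(zy)g_1$ only sees $g_1^o$, and $\int\cos(zy)g_1$ only sees $g_1^e$. Introduce the entire functions
\[
C_j(z):=\int_{-1}^1\cos(zy)\,g_j(y)\,dy\,,\qquad S_j(z):=\int_{-1}^1\sin(zy)\,g_j(y)\,dy\,.
\]
The first identity of \eqref{eq:system-integraledebase} becomes
\[
P_\kappa(1/z)\,S_1(z)+Q_{\kappa-1}(1/z)\,C_1(z)=0\,,\qquad z\in\C^*\,.
\]
Multiply by $z^\kappa$. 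Then $z^\kappa P_\kappa(1/z)$ and $z^\kappa Q_{\kappa-1}(1/z)$ are polynomials in $z$, of opposite parity. Since $S_1$ is odd and $C_1$ is even, both terms have the same parity, so the identity is consistent.

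The goal is to turn this multiplier relation into a differential relation. Multiplication of Fourier transforms by powers of $z$ corresponds to derivatives: $z\,S(z)$ and $z\,C(z)$ correspond, after integration by parts in the sense of distributions, to $D$ acting on $g$ (with a factor $2$ from the change of variable $y=2x-1$, which is the source of the $t/2$ in the $A_\kappa$). So the relation reads
\[
\mathcal F\bigl[\tilde P(D)\,g_1^o+\tilde Q(D)\,g_1^e\bigr]=0
\]
for suitable constant-coefficient operators $\tilde P,\tilde Q$, where $\mathcal F$ is the Fourier transform of a compactly supported distribution on $[-1,1]$. Boundary terms from the integration by parts have to be tracked. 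I would handle them by working with distributions supported in $[-1,1]$ and extended by zero, so that the identity is simply an equality of Fourier–Laplace transforms. By Paley–Wiener–Schwartz this forces the distribution itself to vanish.

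The key algebraic step is to recognise that this vanishing relation is equivalent to the odd part of $A_\kappa(D)g_1$ vanishing, i.e.\ to $A_\kappa(D)f_1$ being even about $x=\tfrac12$. To see this, I would show that the pair of polynomials $\bigl(z^\kappa P_\kappa(1/z),\,z^\kappa Q_{\kappa-1}(1/z)\bigr)$ is related to $A_\kappa$ through the three-term recurrences. Concretely, the function $z^\kappa\bigl(P_\kappa(1/z)\sin z+Q_{\kappa-1}(1/z)\cos z\bigr)$ is, up to $\sqrt{z}$ factors, $z^{\kappa+1/2}J_{\kappa+1/2}$-type, and $A_\kappa$ should be obtained by comparing the recurrence $A_{\kappa+1}=(2\kappa+1)A_\kappa+\tfrac{t^2}{4}A_{\kappa-1}$ with the recurrences for $P_\kappa$ and $Q_\kappa$. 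I would check this for $\kappa=0,1$ explicitly and then proceed by induction. Once the operator identity is established, the odd component $\bigl(A_\kappa(D)g_1\bigr)^o$ has identically zero Fourier transform, hence vanishes.

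The second equation, which has the roles of $\sin$ and $\cos$ exchanged, is handled identically and gives that the even part of $A_\kappa(D)g_2$ vanishes. This is exactly \cite[Theorem~6.6]{GGHN2025}, so the $v_2$ statement can simply be quoted.

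The main obstacle is the algebraic identification of the multiplier with $A_\kappa(D)$ together with the parity splitting. In particular one must justify division by $z^\kappa$, including the behaviour at $z=0$, which is handled by continuity and entire-ness. One must also make sure the polynomial multipliers act on the correct parity components without spurious boundary distributions at $y=\pm1$. Following the GGHN2025 argument verbatim, with $\sin$ and $\cos$ swapped, is the route I would take there.
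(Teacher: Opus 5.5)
Your proposal is correct and follows the same route as the paper: reduce to Proposition~\ref{lelemmedebase}, quote \cite[Theorem~6.6]{GGHN2025} for $j=2$, and rerun that argument with $\sin$ and $\cos$ exchanged for $j=1$. The algebraic identification you leave to induction does hold, namely $z^\kappa\bigl(P_\kappa(1/z)-iQ_{\kappa-1}(1/z)\bigr)=A_\kappa(2iz)$, since both sides satisfy the same three-term recurrence and agree at $\kappa=0,1$; consequently, for the zero extension $\tilde g_1$ of $g_1$, the first identity says exactly that the odd part of the Fourier transform of $A_\kappa(2D_y)\tilde g_1$ vanishes, which gives the claimed evenness.
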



\section{Kernel  of the Fr\'echet differential}

\subsection{Injectivity of the differential for the pair $(\kappa_1, \kappa_2)=(0,1)$ }

In this subsection, we assume that the perturbation
\( (v_1, v_2) \) satisfies the linearized spectral condition
\[
D_{(p,q)} \lambda_{\kappa,n}(0,0) \cdot (v_1, v_2) = 0\,,
\qquad n \in \mathbb{Z},
\]
for both effective angular momenta \( \kappa = 0 \) and \( \kappa = 1 \)\,.

\medskip\noindent
For \( \kappa = 0 \), we already know that \(v_1\) is even and \(v_2\) is odd about \(x=\tfrac12\).
We now apply Theorem~\ref{mainthm} with \( \kappa = 1 \), which yields that
\(A_1(D)\bigl[T_1^{j}[v_j]\bigr]\) is even for \(j=1\) and odd for \(j=2\)\,,
with respect to the same midpoint.

\medskip\noindent
We begin with the simpler case \( j = 2 \)\,.
A straightforward computation yields
\begin{equation}\label{eqTv2}
2 A_1(D)\bigl[T_1^{2}[v_2]\bigr](x) = 2 A_1(D)\bigl[S_{0,2}[v_2]\bigr](x)
= -v_2'(x)
+ \Bigl(2 - \frac{2}{x}\Bigr)\, v_2(x)
- (4x - 2)\int_x^1 \frac{v_2(t)}{t^2}\,dt\, .
\end{equation}
Setting \( y(x) := v_2'(x) \) and evaluating \eqref{eqTv2} at \( x = \tfrac{1}{2} \)\,,
we obtain \( y(\tfrac{1}{2}) = 0 \)\,, since \( v_2 \) is odd. We further compute
\begin{equation}\label{eq:G-definition}
\begin{aligned}
G(x)
&:= D^2 A_1(D)\bigl[T_1^{2}[v_2]\bigr](x) \\
&= A_1(D)\bigl[D^2(S_{0,2}[v_2])\bigr](x) \\
&= A_1(D)\bigl[\tfrac{2}{x}y + y'\bigr](x) \\
&= -\tfrac12 y''(x)
+ \Bigl(1 - \frac{1}{x}\Bigr)y'(x)
+ \Bigl(\frac{2}{x} + \frac{1}{x^2}\Bigr)y(x)\,.
\end{aligned}
\end{equation}
where we have used  Lemma~\ref{ODESl} (2) in the third line. Recalling that \(G\) is odd, the identity
\[
G(x) + G(1-x) = 0\,,
\]
holds for all \(x \in (0,1)\)\,.
Since \(y\) is even, this identity implies that \(y\) satisfies a linear
second-order differential equation on \( (0,1) \), together with the conditions

\[
y\!\left(\tfrac{1}{2}\right) = 0\,,
\qquad
y'\!\left(\tfrac{1}{2}\right) = 0\,.
\]
By the Cauchy--Lipschitz theorem, we conclude that \( y \equiv 0 \)\,.
Therefore \( v_2' = y = 0 \), so \( v_2 \) is constant.
Since \( v_2 \) is odd with respect to \( x = \tfrac{1}{2} \)\,,
this constant must vanish, and thus $v_2 \equiv 0$\,.

\vspace{0.3cm}\noindent
We now examine the case \(j=1\).
Using Lemma~\ref{ODESl} (1), a straightforward computation yields
\begin{equation}\label{eq:Gj1}
G(x)
:= D A_1(D)\bigl[T_1^1[v_1]\bigr](x)
= -\tfrac12 v_1''(x)
+ \Bigl(1-\frac{1}{x}\Bigr)v_1'(x)
+ \Bigl(\frac{2}{x}+\frac{1}{x^{2}}\Bigr)v_1(x)\,.
\end{equation}
The function \(G\) is odd. Writing \(G(x)+G(1-x)=0\) and using the fact that
\(v_1\) is even, we infer that \(v_1\) satisfies the following second--order
linear ordinary differential equation on \((0,1)\)\,:
\begin{equation}\label{eq:v1-ODE}
v_1''(x)
+ \Bigl(\frac{1}{x} - \frac{1}{1-x}\Bigr) v_1'(x)
- \Bigl(\frac{2}{x} + \frac{2}{1-x}
       + \frac{1}{x^{2}} + \frac{1}{(1-x)^{2}}\Bigr) v_1(x)
= 0\,,
\qquad x\in(0,1)\,.
\end{equation}
We now assume that there exists a solution \(v_1\) of~\eqref{eq:v1-ODE} which is even with respect to the midpoint \(x=\tfrac12\), and we impose the normalization conditions
\[
v_1\!\left(\tfrac12\right)=1\,,
\qquad
v_1'\!\left(\tfrac12\right)=0\,.
\]
Using \textsc{Mathematica}, we obtain the explicit closed-form expression
\begin{equation}\label{eq:v1-explicita}
v_1(x)
=
\frac{2x^{2}-2x+1}{2x(1-x)}
=
\frac12\left(\frac{1}{x}+\frac{1}{1-x}\right)-1\,,
\qquad x\in(0,1)\,.
\end{equation}
In particular, \(v_1\) blows up like \(\frac{1}{2x}\) as \(x\to0^+\) and therefore does not belong to \(L^2(0,1)\).
It follows that one must have \(v_1\!\left(\tfrac12\right)=0\), and the Cauchy--Lipschitz theorem then implies that
\(v_1 \equiv 0\) on \((0,1)\).

\medskip\noindent
Combining the conclusions of the two cases $j=1$ and $j=2$, we obtain that
\[
(v_1,v_2)=(0,0).
\]
Hence the kernel of the Fr\'echet differential of the spectral map at the zero
potential is trivial for the pair of effective angular momenta $(\kappa_1,\kappa_2)=(0,1)$.
We have therefore proved the following result.

\begin{thm}[Injectivity for the pair $(0,1)$]
	\label{thm:injective-01a}
	For $(\kappa_1,\kappa_2)=(0,1)$, the Fr\'echet differential of the spectral map
	\[
	D_{(p,q)}\mathcal S_{0,1}(0,0)
	:
	L^2(0,1)\times L^2(0,1)
	\longrightarrow
	\ell^2_{\mathbb R}(\mathbb Z)\times \ell^2_{\mathbb R}(\mathbb Z)
	\]
	is one to one.
\end{thm}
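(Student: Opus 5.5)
The theorem is essentially a repackaging of the computation just carried out, so the proof will run in three steps. We reduce to the decoupled kernel problem, feed in the $\kappa=0$ parity information, and then use the $\kappa=1$ information from Theorem~\ref{mainthm} to kill each component.

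\emph{Reduction.} Let $S=D_{(p,q)}\mathcal S_{0,1}(0,0)$ and suppose $S(v_1,v_2)=0$. Since $\mathcal U=\mathcal U_0\times\mathcal U_1$ is a bounded isomorphism, \eqref{eq:U-T-block} shows this is equivalent to the decoupled system \eqref{eq:kernel-decoupled}. Equivalently, $D_{(p,q)}\lambda_{\kappa,n}(0,0)\cdot(v_1,v_2)=0$ for all $n\in\mathbb Z$ and for both $\kappa=0$ and $\kappa=1$.

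\emph{Using $\kappa=0$ and $\kappa=1$.} From the explicit $\kappa=0$ analysis at the end of Section~\ref{kappaegal0}, $v_1$ is even and $v_2$ is odd with respect to $x=\tfrac12$. Theorem~\ref{mainthm} with $\kappa=1$ then says that $A_1(D)T_1^1[v_1]$ is even and $A_1(D)T_1^2[v_2]$ is odd with respect to $x=\tfrac12$. Here $T_1=S_1$ up to sign, and $A_1(D)=1-\tfrac12 D$. Differentiating once (for $j=1$) or twice (for $j=2$) and eliminating the nonlocal integral terms via Lemma~\ref{ODESl} turns these into local second-order expressions $G$, given by \eqref{eq:Gj1} and \eqref{eq:G-definition}. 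Both $G$'s are odd, since a derivative flips parity.

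\emph{The $v_2$ component.} Set $y=v_2'$, which is even. Evaluating \eqref{eqTv2} at $x=\tfrac12$ gives $y(\tfrac12)=0$, using $v_2(\tfrac12)=0$ and the vanishing of the factor $4x-2$. Since $y$ is even, also $y'(\tfrac12)=0$. The relation $G(x)+G(1-x)=0$, rewritten with $y(1-x)=y(x)$, is a linear second-order ODE in $y$ with smooth coefficients on $(0,1)$. The coefficient of $y''$ is $-1$, so the equation is nondegenerate there. Cauchy--Lipschitz then gives $y\equiv0$. Hence $v_2$ is constant, and by oddness $v_2\equiv0$.

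\emph{The $v_1$ component.} Evenness of $v_1$ gives $v_1'(\tfrac12)=0$. The relation $G(x)+G(1-x)=0$ becomes the ODE \eqref{eq:v1-ODE}. Its solution space with $v_1'(\tfrac12)=0$ is one-dimensional, spanned by the explicit function \eqref{eq:v1-explicita}. One can verify directly, without Mathematica, that this function satisfies \eqref{eq:v1-ODE}; this is a routine substitution. That solution behaves like $\frac1{2x}$ at $0$ and so is not in $L^2(0,1)$. Therefore $v_1=c\,w$ with $w\notin L^2$ forces $c=0$, and $v_1\equiv0$.

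\emph{Main obstacle.} The delicate point is regularity. A priori $v_j$ are only in $L^2$, and the parity identities hold in the sense of distributions. So I would first argue that the distributional ODEs, which are elliptic with smooth coefficients on $(0,1)$, force $v_1$ and $v_2'$ to be smooth in the interior. This justifies pointwise evaluation at $\tfrac12$ and the use of Cauchy--Lipschitz. Once this is done, the two components together give $(v_1,v_2)=(0,0)$, so the differential is injective.
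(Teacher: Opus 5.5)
Your proposal is correct and follows the paper's proof essentially step for step. You use the $\kappa=0$ parity conditions, Theorem~\ref{mainthm} with $\kappa=1$, the local odd expressions \eqref{eq:G-definition} and \eqref{eq:Gj1}, the conditions at $x=\tfrac12$ (including $y(\tfrac12)=0$ from \eqref{eqTv2}), and the non-$L^2$ solution \eqref{eq:v1-explicita} to rule out a nonzero $v_1$. Your two additions are sound and fill in points the paper leaves implicit: verifying \eqref{eq:v1-explicita} by hand rather than by \textsc{Mathematica}, and observing that the distributional ODEs have leading coefficient $\mp1$ on $(0,1)$, so their solutions are smooth there, which justifies pointwise evaluation at $x=\tfrac12$ and the use of Cauchy--Lipschitz.
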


\subsection{Injectivity of the differential for the pair $(\kappa_1, \kappa_2)=(0,2)$}

\medskip\noindent
Throughout this subsection, we assume that the perturbation \((v_1,v_2)\) fulfills the
linearized spectral constraint
\[
D_{(p,q)}\lambda_{\kappa,n} (0,0)\cdot (v_1,v_2)=0\,,
\qquad n\in\mathbb{Z}\,,
\]
simultaneously for the  effective angular momenta \(\kappa=0\) and \(\kappa=2\)\,.

\medskip\noindent
For \(\kappa=0\), as before, \(v_1\) is even and \(v_2\) is odd with respect to the midpoint \(x=\tfrac12\)\,.

\medskip\noindent
Let us now examine the case \(\kappa = 2\)\,. According to Theorem~\ref{mainthm}, and setting \(w_j := T_2^{\,j}[ v_j ]\),
\(j=1,2\)\,, we have
\begin{equation}\label{eq:schrodinger_condition}
A_2(D)\, \bigl[w_j \bigr]
\quad \text{is even if \(j=1\)\,, and odd if \(j=2\)\,.}
\end{equation}
Here \(A_2(t)=\tfrac14 t^{2}-\tfrac32 t+3\) and \(D=\tfrac{d}{dx}\)\,.

\vspace{0.3cm}\noindent
We begin by studying the case \(j=2\). We introduce the following notation.  
Set $f = S_{0,2}\, [v_2 ]$ \,,  so that $w_2 = -\, S_{1,2} [f]\,$.  
Differentiating \eqref{eq:schrodinger_condition} four times  and applying Lemma~\ref{ODESl}\,(2) with $\kappa=1$\,, we obtain
\begin{equation}\label{eq:A2-after-derivatives}
A_2(D)\!\left( f^{(4)}(x) + \frac{6}{x} f^{(3)}(x) \right)
\quad \text{is odd.}
\end{equation}

\medskip\noindent
On the other hand, since $f = S_{0,2}\, [v_2]$, a second application of Lemma~\ref{ODESl}\,(2), now with $\kappa = 0$, yields
\begin{equation}\label{eq:def-f}
f''(x) = \frac{2}{x}\, v_2'(x) + v_2''(x)\,.
\end{equation}
\noindent
Setting $y(x):=v_2'(x)$ (which is even), we obtain after simplification
\begin{equation}
\begin{aligned}
G(x)
&:= 4A_2(D)\!\left(f^{(4)}(x)+\frac{6}{x}f^{(3)}(x)\right) \\
&= y^{(5)}(x)
+ \left(\frac{8}{x}-6\right) y^{(4)}(x)
+ \left(12 - \frac{48}{x} - \frac{8}{x^{2}}\right) y^{(3)}(x) \\
&\quad
+ \left(\frac{96}{x} - \frac{24}{x^{3}}\right) y''(x)
+ \left(\frac{96}{x^{2}} + \frac{144}{x^{3}} + \frac{96}{x^{4}}\right) y'(x) \\
&\quad
+ \left(-\frac{96}{x^{3}} - \frac{144}{x^{4}} - \frac{96}{x^{5}}\right) y(x)\,.
\end{aligned}
\end{equation}

\medskip\noindent
Because \eqref{eq:schrodinger_condition} asserts that $A_2(D) [w_2 ]$ is odd, 
and differentiation four times does not alter odd parity, 
we conclude that $G(x)$ is itself odd. Writing $G(x)+G(1-x)=0$ and using the fact that $y$ is even, we see that $y$
satisfies a linear differential equation of order~$4$. We denote by 

\begin{equation}
v_2(x) := \int_{1/2}^x y(t)\,dt
\end{equation}
the unique odd primitive of $y(x)$. An immediate computation gives the following expression:
\begin{equation}
w_2 (x)= T_2^2 [v_2](x)
= -\,S_{1,2}\bigl[ S_{0,2}[v_2]\bigr](x)
= -\,v_2(x)
- 4x \int_x^1 \frac{v_2(t)}{t^{2}}\,dt
+ 12x^{3} \int_x^1 \frac{v_2(t)}{t^{4}}\,dt \,.
\end{equation}
Applying the differential operator $4A_2(D)=D^{2}-6D+12$ to $w_2(x)$, we obtain
\begin{equation}\label{eq:A2w2-explicit}
\begin{aligned}
4A_2(D)\bigl[w_2\bigr](x)
=\;&
-\,v_2''(x)
+\Bigl(6-\frac{8}{x}\Bigr)v_2'(x)
+\Bigl(-12+\frac{48}{x}-\frac{24}{x^{2}}\Bigr)v_2(x)
\\[0.4em]
&\quad
+\bigl(24-48x\bigr)
\int_x^1 \frac{v_2(t)}{t^{2}}\,dt
\\[0.4em]
&\quad
+\bigl(72x-216x^{2}+144x^{3}\bigr)
\int_x^1 \frac{v_2(t)}{t^{4}}\,dt \,.
\end{aligned}
\end{equation}
Evaluating this expression at $x=\tfrac12$ and using that $v_2$ is odd, we obtain
\begin{equation}\label{eq:A2w2-half}
4A_2(D)\bigl[w_2\bigr]\!\left(\tfrac12\right)
=
-\,v_2''\!\left(\tfrac12\right)
-10\,v_2'\!\left(\tfrac12\right)
-12\,v_2\!\left(\tfrac12\right)
= -10\,y\!\left(\tfrac12\right)\,.
\end{equation}
Since $4A_2(D)\bigl[w_2\bigr](x)$ is odd, we therefore conclude that $y\!\left(\tfrac12\right)=0$\,.

\vspace{0.3cm}\noindent
Proceeding in the same way, we compute
\begin{equation}\label{eq:D2A2w2-explicit}
\begin{aligned}
4\,D^{2}A_{2}(D)\bigl[w_{2}\bigr](x)
=\;&
-\,v_2^{(4)}(x)
+\Bigl(6-\frac{8}{x}\Bigr)v_2^{(3)}(x)
+\Bigl(-12+\frac{48}{x}-\frac{8}{x^{2}}\Bigr)v_2''(x)
\\[0.4em]
&\quad
+\Bigl(-\frac{96}{x}+\frac{96}{x^{2}}+\frac{8}{x^{3}}\Bigr)v_2'(x)
+\Bigl(-\frac{288}{x^{2}}+\frac{144}{x^{3}}\Bigr)v_2(x)
\\[0.4em]
&\quad
+\bigl(-432+864x\bigr)
\int_x^1 \frac{v_2(t)}{t^{4}}\,dt\, .
\end{aligned}
\end{equation}
Recalling that $y(x)=v_2'(x)$ is even and $y\!\left(\tfrac12\right)=0$, we evaluate at $x=\tfrac12$ and obtain 
\[
4\,D^{2}A_{2}(D)\bigl[w_{2}\bigr]\!\left(\tfrac12\right)
=
-\,y^{(3)}\!\left(\tfrac12\right)
-10\,y''\!\left(\tfrac12\right)
+52\,y'\!\left(\tfrac12\right)
+256\,y\!\left(\tfrac12\right)
= -10\,y''\!\left(\tfrac12\right)\,.
\]
Since $4\,D^{2}A_{2}(D)\bigl[w_{2}\bigr](x)$ is also an odd function, we conclude, as above, that $ y''\!\left(\tfrac12\right)=0$\,.

\medskip\noindent
In conclusion, $y$ satisfies a fourth--order linear differential equation with
the initial conditions
\begin{equation}\label{eq:CLzzz}
y\!\left(\tfrac12\right)=0\,,\qquad
y'\!\left(\tfrac12\right)=0\,,\qquad
y''\!\left(\tfrac12\right)=0\,,\qquad
y^{(3)}\!\left(\tfrac12\right)=0\,.
\end{equation}
The Cauchy--Lipschitz theorem then implies that $y\equiv 0\,$.
Since $y=v_2'$ and $v_2$ is odd, this in turn forces $v_2\equiv 0$\,.

\vspace{0.5cm}\noindent
We now turn to the analysis in the case $j=1$. In this case, $A_2(D)\, [w_1]$ is an even function.

\vspace{0.2cm}\noindent
We introduce
\[
w_1(x):=-S_{1,1} \bigl[S_{0,1} [v_1 ]\bigr](x)
=
-\,v_1(x)
-4\int_x^1 \frac{v_1(t)}{t}\,dt
+12x^{2}\int_x^1 \frac{v_1(t)}{t^{3}}\,dt\,.
\]
A direct computation yields
\begin{equation}\label{eq:DA2w1}
\begin{aligned}
D A_2(D) [w_1](x)
&=
-\frac14\,v_1^{(3)}(x)
+\Bigl(\frac32-\frac{2}{x}\Bigr)v_1''(x)
+\Bigl(\frac{12}{x}-\frac{2}{x^2}-3\Bigr)v_1'(x)
\\
&\quad
+\Bigl(\frac{2}{x^3}+\frac{24}{x^2}-\frac{24}{x}\Bigr)v_1(x)
+
36(2x-1)\int_x^1\frac{v_1(t)}{t^3}\,dt\,.
\end{aligned}
\end{equation}
The function $D A_2(D) [w_1]$ is odd with respect to $x=\tfrac12\,$. Moreover, in the case
$\kappa=0$, we recall that $v_1$ is even. Evaluating \eqref{eq:DA2w1} at $x=\tfrac12\,$,
we have $v_1'(\tfrac12)=v_1^{(3)}(\tfrac12)=0\,$, and the integral term vanishes since
$2x-1=0$. Therefore,
\begin{equation}\label{eq:DA2w1-midpoint}
64\,v_1\!\left(\tfrac12\right)
-\frac52\,v_1''\!\left(\tfrac12\right) =0\,.
\end{equation}

\medskip\noindent
Now, following the usual convention, we introduce
\[
f = S_{0,1} [v_1]\,,
\qquad\text{so that}\qquad
w_1 = -\, S_{1,1} [f]\,.
\]
After differentiating  three times $A_2(D)\, [w_1]$ and invoking Lemma~\ref{ODESl} (1) with $\kappa=1$, we arrive at
\begin{equation}\label{eq:A2-after-derivativesbis}
A_2(D)\!\left( f^{(3)}(x) + \frac{6}{x} f^{(2)}(x) \right)
\quad \text{is odd.}
\end{equation}

\medskip\noindent
On the other hand, because $f = S_{0,1} [v_1]\,$, a second application of Lemma~\ref{ODESl} (1), now with $\kappa=0$, yields
\begin{equation}\label{eq:def-fbis}
f'(x) = \frac{2}{x}\, v_1(x) + v_1'(x)\,.
\end{equation}
Setting
\[
G(x):=4A_2(D)\!\left( f^{(3)}(x)+\frac{6}{x}f^{(2)}(x)\right)\,,
\]
a straightforward simplification yields the following differential
expression:
\[
\begin{aligned}
G(x)
=\;&
v_1^{(5)}(x)
+\Bigl(\frac{8}{x}-6\Bigr)v_1^{(4)}(x)
+\Bigl(-\frac{8}{x^{2}}-\frac{48}{x}+12\Bigr)v_1^{(3)}(x)
\\[0.5em]
&\quad
+\Bigl(-\frac{24}{x^{3}}+\frac{96}{x}\Bigr)v_1''(x)
+\Bigl(\frac{96}{x^{4}}+\frac{144}{x^{3}}+\frac{96}{x^{2}}\Bigr)v_1'(x)
\\[0.5em]
&\quad
+\Bigl(-\frac{96}{x^{5}}-\frac{144}{x^{4}}-\frac{96}{x^{3}}\Bigr)v_1(x)\,.
\end{aligned}
\]
We therefore recover exactly the same odd function \(G(x)\) as in the previous
case with \(v_1\) even. 

\vspace{0.2cm}\noindent
Writing $G(x)+G(1-x)=0$, we get :
\begin{equation}\label{EDO02}
\begin{aligned}
&\quad \phantom{+} \ \Biggl(
\frac{8}{x}+\frac{8}{1-x}-12
\Biggr) v_1^{(4)}(x)
\\[0.6em]
&\quad
+\Biggl(
-\frac{8}{x^{2}}+\frac{8}{(1-x)^{2}}
-\frac{48}{x}+\frac{48}{1-x}
\Biggr) v_1^{(3)}(x)
\\[0.6em]
&\quad
+\Biggl(
-\frac{24}{x^{3}}-\frac{24}{(1-x)^{3}}
+\frac{96}{x}+\frac{96}{1-x}
\Biggr) v_1''(x)
\\[0.6em]
&\quad
+\Biggl(
\frac{96}{x^{4}}-\frac{96}{(1-x)^{4}}
+\frac{144}{x^{3}}-\frac{144}{(1-x)^{3}}
+\frac{96}{x^{2}}-\frac{96}{(1-x)^{2}}
\Biggr) v_1'(x)
\\[0.6em]
&\quad
+\Biggl(
-\frac{96}{x^{5}}-\frac{96}{(1-x)^{5}}
-\frac{144}{x^{4}}-\frac{144}{(1-x)^{4}}
-\frac{96}{x^{3}}-\frac{96}{(1-x)^{3}}
\Biggr) v_1(x)
= 0 \, .
\end{aligned}
\end{equation}

\paragraph{Indicial roots and determination of the solution.}
Using \textsc{Mathematica}, we compute the indicial equation of \eqref{EDO02} at
the singular point $x=0$. This yields
\[
8(\rho-4)(\rho-3)(\rho-1)(\rho+1)=0\,,
\]
so that the indicial roots are
\[
\rho\in\{-1,\,1,\,3,\,4\}\,.
\]
We now look for the solution of \eqref{EDO02} satisfying the normalization
condition
\[
v_1\!\left(\tfrac12\right)=1\,.
\]
Since $v_1$ is even with respect to $x=\tfrac12\,$, we have
\[
v_1'\!\left(\tfrac12\right)=v_1^{(3)}\!\left(\tfrac12\right)=0\,,
\]
and, from \eqref{eq:DA2w1-midpoint}, it follows that
\[
v_1''\!\left(\tfrac12\right)=\frac{128}{5}\,.
\]
By the Cauchy--Lipschitz theorem, these conditions uniquely determine a solution
on $(0,1)$. Using \textsc{Mathematica}, we obtain the following explicit expression:
\begin{equation}\label{eq:v1-pfd}
\begin{aligned}
v_1(x)
={}&
\frac{5}{3}
-\frac{5}{8x}
-\frac{5}{8(1-x)}
-\frac{15}{8\,(2-3x+3x^2)}
+\frac{25}{12\,(2-3x+3x^2)^2}\,.
\end{aligned}
\end{equation}
This solution
exhibits a non-integrable blow--up at the boundary. In particular,
\[
v_1\notin L^2(0,1)\,.
\]
We deduce that one must impose
\[
v_1\!\left(\tfrac12\right)=0\,.
\]
It then follows that all derivatives of $v_1$ at $x=\tfrac12$ up to order three
vanish. By uniqueness of the Cauchy problem, this implies that
\[
v_1 \equiv 0 \qquad \text{on } (0,1)\,.
\]

\medskip\noindent
Thus, we have established the following injectivity result in the case $(0,2)$:

\begin{thm}[Injectivity for the pair $(0,2)$]
	\label{thm:injective-01b}
	For $(\kappa_1,\kappa_2)=(0,2)$\,, the Fr\'echet differential of the spectral map
	\[
	D_{(p,q)}\mathcal S_{0,2}(0,0)\,
	:\,
	L^2(0,1)\times L^2(0,1)
	\longrightarrow
	\ell^2_{\mathbb R}(\mathbb Z)\times \ell^2_{\mathbb R}(\mathbb Z)
	\]
	is injective.
\end{thm}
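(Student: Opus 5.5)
The plan is to assemble the two halves already obtained in this subsection and conclude through the block-diagonal structure \eqref{eq:U-T-block}. Let $(v_1,v_2)\in L^2(0,1)^2$ lie in the kernel of $S=D_{(p,q)}\mathcal S_{0,2}(0,0)$. Since $\mathcal U=\mathcal U_{0}\times\mathcal U_{2}$ is a bounded isomorphism, $S(v_1,v_2)=0$ is equivalent to the decoupled system \eqref{eq:kernel-decoupled}. That system is in turn equivalent to $D_{(p,q)}\lambda_{\kappa,n}(0,0)\cdot(v_1,v_2)=0$ for all $n\in\mathbb Z$ and both $\kappa=0$ and $\kappa=2$. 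The $\kappa=0$ analysis of Section~\ref{kappaegal0} then gives that $v_1$ is even and $v_2$ is odd about $x=\tfrac12$. Theorem~\ref{mainthm} with $\kappa=2$ gives \eqref{eq:schrodinger_condition}, namely that $A_2(D)[T_2^1 v_1]$ is even and $A_2(D)[T_2^2 v_2]$ is odd.

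For $v_2$, I would set $y=v_2'$, which is even, and use Lemma~\ref{ODESl}(2) twice, with $\kappa=1$ and then $\kappa=0$, to express $G=4A_2(D)(f^{(4)}+\tfrac6x f^{(3)})$ as a fifth-order differential expression in $y$. The relation $G(x)+G(1-x)=0$ cancels the leading term $y^{(5)}$ by parity and leaves a fourth-order linear ODE, which is regular on $(0,1)$ near $x=\tfrac12$. Evenness of $y$ gives $y'(\tfrac12)=y^{(3)}(\tfrac12)=0$. Evaluating the odd functions $4A_2(D)[w_2]$ and $4D^2A_2(D)[w_2]$ at the midpoint gives $y(\tfrac12)=0$ and $y''(\tfrac12)=0$. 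Cauchy--Lipschitz then yields $y\equiv0$, so $v_2$ is constant, and since it is odd, $v_2\equiv0$.

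For $v_1$, the same reduction through Lemma~\ref{ODESl}(1) gives the identical odd expression $G$ in $v_1$, and hence the fourth-order ODE \eqref{EDO02}. Evenness of $v_1$ kills $v_1'(\tfrac12)$ and $v_1^{(3)}(\tfrac12)$. The midpoint relation \eqref{eq:DA2w1-midpoint} ties $v_1''(\tfrac12)$ to $v_1(\tfrac12)$. So the admissible solutions form a one-dimensional family spanned by the explicit function \eqref{eq:v1-pfd}.

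The main obstacle is excluding this one remaining solution. My approach is to check directly that the rational function \eqref{eq:v1-pfd} solves \eqref{EDO02} with the prescribed Cauchy data at $\tfrac12$. It then suffices to note the term $-\tfrac{5}{8x}$: the other terms are bounded near $0$, since $2-3x+3x^2>0$, so this solution behaves like $-\tfrac{5}{8x}$ as $x\to0^+$ and is not in $L^2(0,1)$. The indicial root $\rho=-1$ at $x=0$ is consistent with this. Consequently $v_1(\tfrac12)=0$, all Cauchy data vanish, and $v_1\equiv0$.

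Together with $v_2\equiv0$, this shows that the kernel is trivial, which proves Theorem~\ref{thm:injective-01b}.
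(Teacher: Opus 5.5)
Your proposal is correct and follows the paper's proof essentially step by step: the $\kappa=0$ parity conditions, the reduction via Lemma~\ref{ODESl} to the common fourth-order reflection-symmetric ODE, the midpoint evaluations giving zero Cauchy data for $y=v_2'$, and the one-parameter family for $v_1$ excluded by the $-\tfrac{5}{8x}$ singularity of \eqref{eq:v1-pfd}. When you carry out the direct check, note that \eqref{eq:v1-pfd} as printed has $v_1(\tfrac12)=-1$ and $v_1''(\tfrac12)=-\tfrac{128}{5}$, so it is the negative of the normalized solution; by linearity this does not affect the argument.
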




\subsection{Injectivity of the differential for the pair $(\kappa_1, \kappa_2)=(1,2)$}

Throughout this subsection, we assume that $(v_1,v_2) \in L^2(0,1)^2$ satisfies the linearized spectral condition
\[
D_{(p,q)} \lambda_{\kappa,n}(0,0)\cdot (v_1,v_2)=0\,,
\qquad n\in\mathbb{Z},
\]
for both effective angular momenta $\kappa=1$ and $\kappa=2\,$.

\medskip\noindent
Applying Theorem~\ref{mainthm} with $\kappa=1$ and $\kappa=2$, we obtain that
$A_\kappa(D)\bigl[T_\kappa^j[v_j]\bigr]$ is even for $j=1$ and odd for $j=2$,
with respect to $x=\tfrac12\,$.

\medskip\noindent
We first consider the  case $j=2$ with $\kappa=1$.
Set $f=S_{0,2}[v_2]$. A direct computation gives
\begin{equation}
	2A_1(D)\bigl[T_1^2[v_2]\bigr](x)
	=
	2A_1(D)\bigl[S_{0,2}[v_2]\bigr](x)
	=
	2A_1(D)[f](x)
	=
	-f'(x)+2f(x)\,.
\end{equation}
Decomposing $f$ into its even and odd parts,
\begin{equation}\label{decomparity}
	f=f_e+f_o\,,
\end{equation}
where $f_e$ is even and $f_o$ is odd with respect to $x=\tfrac12\,$,
we immediately obtain
\begin{equation}\label{linkparity}
	f_e=\frac12\,f_o'\,,
\end{equation}
since the function $- f_o' + 2 f_e$ is even and odd and therefore
\begin{equation}\label{formf}
	f=\frac12\,f_o'+f_o\,.
\end{equation}
\medskip
\noindent
We now exploit the case $\kappa=2\,$. We define
\[
g := S_{1,2}[f]
= f(x)-6x^{3}\int_x^1 \frac{f(t)}{t^{4}}\,dt,
\]
so that $g=-T_2^2[v_2]$. By assumption,
$
A_2(D)\bigl[T_2^2[v_2]\bigr]
$
is odd in the sense of distributions, where 
\[
A_2(t)=\tfrac14 t^2-\tfrac32 t+3\,.
\]
A straightforward computation yields
\begin{equation}\label{parity1}
	4A_2(D)[g](x)
	=
	f''(x)
	+6\Bigl(\frac{1}{x}-1\Bigr)f'(x)
	+\Bigl(\frac{12}{x^2}-\frac{36}{x}+12\Bigr)f(x)
	-36x(1-x)(1-2x)\int_x^1 \frac{f(t)}{t^4}\,dt\,.
\end{equation}
Since $4A_2(D)[g]$ is odd with respect to $x=\tfrac12\,$, evaluating \eqref{parity1} at $x=\tfrac12$ yields
\[
f''\!\left(\tfrac12\right)
+6f'\!\left(\tfrac12\right)
-12f\!\left(\tfrac12\right)
=0\,.
\]
Using the decomposition $f=\tfrac12 f_o'+f_o$\,, where $f_o$ is odd, we obtain
\begin{equation}\label{eq:fo-third-derivative}
	f_o^{(3)} \!\left(\tfrac12\right)
	=0\,.
\end{equation}
Similarly, differentiating \eqref{parity1} twice yields
\begin{equation}\label{eq:D2A2g}
	\begin{aligned}
		D^{2}\bigl(4A_2(D)[g]\bigr)(x)
		&=
		f^{(4)}(x)
		+6\Bigl(\frac{1}{x}-1\Bigr)f^{(3)}(x)
		+\Bigl(12-\frac{36}{x}\Bigr)f''(x)
		\\[0.3em]
		&\quad
		+\Bigl(\frac{72}{x}-\frac{36}{x^{2}}\Bigr)f'(x)
		+\Bigl(\frac{144}{x^{2}}-\frac{72}{x^{3}}\Bigr)f(x)
		\\[0.3em]
		&\quad
		+216(1-2x)\int_x^1 \frac{f(t)}{t^{4}}\,dt\,.
	\end{aligned}
\end{equation}
Evaluating the identity \eqref{eq:D2A2g} at $x=\tfrac12$ yields
\begin{equation}\label{eq:fo-fifth-derivative}
	f_o^{(5)}\!\left(\tfrac12\right)=48\,f_o^{(3)}\!\left(\tfrac12\right)=0\,.
\end{equation}
Finally, a similar computation yields
\begin{equation}\label{eq:D4A2g}
	\begin{aligned}
		G(x):= D^{4}\bigl(4A_2(D)[g]\bigr)(x)
		&=
		f^{(6)}(x)
		+6\Bigl(\frac{1}{x}-1\Bigr)f^{(5)}(x)
		\\[0.3em]
		&\quad
		+\Bigl(12-\frac{36}{x}-\frac{12}{x^{2}}\Bigr)f^{(4)}(x)
		+\Bigl(\frac{72}{x}+\frac{36}{x^{2}}+\frac{12}{x^{3}}\Bigr)f^{(3)}(x)\,.
	\end{aligned}
\end{equation}
Replacing $f=\frac12 f_o'+f_o\,$, we get
\begin{equation}\label{eq:G-fo-expanded}
	\begin{aligned}
		G(x)
		&=
		\frac12\,f_o^{(7)}(x)
		+\Bigl(\frac{3}{x}-2\Bigr)f_o^{(6)}(x)
		\\[0.3em]
		&\quad
		-\Bigl(\frac{12}{x}+\frac{6}{x^{2}}\Bigr)f_o^{(5)}(x)
		+\Bigl(12+\frac{6}{x^{2}}+\frac{6}{x^{3}}\Bigr)f_o^{(4)}(x)
		\\[0.3em]
		&\quad
		+\Bigl(\frac{72}{x}+\frac{36}{x^{2}}+\frac{12}{x^{3}}\Bigr)f_o^{(3)}(x)\,.
	\end{aligned}
\end{equation}
We now use the symmetry condition $G(x)+G(1-x)=0$. 
This yields the following differential equation:
\begin{equation}\label{eq:symmetry-fo}
	\begin{aligned}
		0
		&=
		f_o^{(7)}(x)
		+3\Bigl(\frac{1}{x}-\frac{1}{1-x}\Bigr)f_o^{(6)}(x)
		\\[0.3em]
		&\quad
		-\Bigl(\frac{12}{x}+\frac{6}{x^{2}}+\frac{12}{1-x}+\frac{6}{(1-x)^{2}}\Bigr)f_o^{(5)}(x)
		\\[0.3em]
		&\quad
		+6\Bigl(\frac{1}{x^{2}}+\frac{1}{x^{3}}-\frac{1}{(1-x)^{2}}-\frac{1}{(1-x)^{3}}\Bigr)f_o^{(4)}(x)
		\\[0.3em]
		&\quad
		+\Bigl(\frac{72}{x}+\frac{36}{x^{2}}+\frac{12}{x^{3}}
		+\frac{72}{1-x}+\frac{36}{(1-x)^{2}}+\frac{12}{(1-x)^{3}}\Bigr)f_o^{(3)}(x)\,.
	\end{aligned}
\end{equation}
Finally, introducing
\[
y(x)=f_o^{(3)}(x)\,,
\]
we are led to a fourth--order differential equation satisfied by $y$. 
The function $y$ is even with respect to $x=\tfrac12\,$, and the previous identities imply
\[
y^{(k)}\!\left(\tfrac12\right)=0, \qquad k=0,1,2,3\,.
\]
The Cauchy--Lipschitz theorem then yields that
$y\equiv 0$. Consequently, $f_o$ must be an odd polynomial of degree at most two,
hence it necessarily takes the form
\begin{equation}\label{eq:fo-linear}
	f_o(x)=a\,(x-\tfrac12)\,.
\end{equation}
Using once more the relation $f=\tfrac12 f_o'+f_o\,$, we deduce
\begin{equation}\label{eq:f-linear}
	f(x)=\frac{a}{2}+a\,(x-\tfrac12)=a\,x\,.
\end{equation}
Since $f=S_{0,2}[v_2]\,$, we recover $v_2$ by applying the left inverse
$S_{0,1}^*$ given in Lemma~\ref{akns_lemme_reduction_indice}(v), namely
\[
v_2(x)=S_{0,1}^*[f](x)=f(x)-\frac{2}{x}\int_0^x f(t)\,dt\,.
\]
A direct computation yields
\begin{equation}\label{eq:v2-explicit}
	v_2(x)
	=0\,.
\end{equation}

\vspace{0.5cm}\noindent
Let us now examine the case $j=1\,$. 
By Theorem~\ref{mainthm} applied with $\kappa=1$ and $\kappa=2$, we know that
$
A_\kappa(D)\bigl[T_\kappa^1[v_1]\bigr]
$
is even. Set $f=S_{0,1}[v_1]\,$. As in the case $j=2\,$, the analysis of the case $\kappa=1$ yields
\begin{equation}\label{formfbis}
	f=f_e + \frac12\,f_e'\,.
\end{equation}
(As before, $f_o$ denotes the odd part of $f$ with respect to $x=\tfrac12\,$, and $f_e$ its even part.)
\medskip
\noindent
We now exploit the case $\kappa=2$. We define
\[
g := S_{1,1}[f]
= f(x)-6x^{2}\int_x^1 \frac{f(t)}{t^{3}}\,dt\,,
\]
so that $g=-T_2^1[v_1]\,$. By assumption,
$
D A_2(D)\bigl[T_2^1[v_1]\bigr]
$
is odd in the sense of distributions. A straightforward computation yields
\begin{equation}\label{parity1bis}
	\begin{aligned}
		4D A_2(D)[g](x)
		={}& f^{(3)}(x)
		+6\Bigl(\frac{1}{x}-1\Bigr)f''(x)
		+\Bigl(12-\frac{36}{x}\Bigr)f'(x)
		+\Bigl(\frac{72}{x}-\frac{36}{x^{2}}\Bigr)f(x)\\
		&\quad
		+72(1-2x)\int_x^1 \frac{f(t)}{t^{3}}\,dt\,.
	\end{aligned}
\end{equation}
Since $4D A_2(D)[g]$ is odd with respect to $x=\tfrac12\,$, evaluating
\eqref{parity1bis} at $x=\tfrac12$ gives
\begin{equation}\label{eq:fe-second-derivative-half}
	f_o^{(3)}\!\left(\tfrac12\right)=48 f_o'\!\left(\tfrac12\right)\, .
\end{equation}
Similarly, differentiating \eqref{parity1bis} twice yields
\begin{equation}
	\begin{aligned}
		H(x):=4D^{3}A_2(D)[g](x)
		={}& f^{(5)}(x)
		+6\Bigl(\frac{1}{x}-1\Bigr)f^{(4)}(x)\\
		&\quad
		+\Bigl(12-\frac{36}{x}-\frac{12}{x^{2}}\Bigr)f^{(3)}(x)\\
		&\quad
		+\Bigl(\frac{72}{x}+\frac{36}{x^{2}}+\frac{12}{x^{3}}\Bigr)f''(x)\,.
	\end{aligned}
\end{equation}
Replacing $f=f_e+\tfrac12 f_e'$ in $H(x)$ and using the relation
$f_o=\tfrac12 f_e'\,$, we can express $H$ entirely in terms of $f_o\,$.
A straightforward computation yields
\begin{equation}\label{eq:H-fo-only}
	\begin{aligned}
		H(x)
		={}& f_o^{(5)}(x)
		+\Bigl(\frac{6}{x}-4\Bigr) f_o^{(4)}(x)
		-\Bigl(\frac{24}{x}+\frac{12}{x^{2}}\Bigr) f_o^{(3)}(x)\\
		&\quad
		+\Bigl(24+\frac{12}{x^{2}}+\frac{12}{x^{3}}\Bigr) f_o''(x)
		+\Bigl(\frac{144}{x}+\frac{72}{x^{2}}+\frac{24}{x^{3}}\Bigr) f_o'(x)\,.
	\end{aligned}
\end{equation}
We now use the symmetry condition $H(x)+H(1-x)=0$. 
This yields the following differential equation:
\begin{equation}\label{edo-fo-final}
	\begin{aligned}
		0={}&
		2f_o^{(5)}(x)
		+\Bigl(\frac{6}{x}-\frac{6}{1-x}\Bigr)f_o^{(4)}(x)\\
		&-\Bigl(\frac{24}{x}+\frac{24}{1-x}
		+\frac{12}{x^{2}}+\frac{12}{(1-x)^{2}}\Bigr)f_o^{(3)}(x)\\
		&+\Bigl(\frac{12}{x^{2}}+\frac{12}{x^{3}}
		-\frac{12}{(1-x)^{2}}-\frac{12}{(1-x)^{3}}\Bigr)f_o''(x)\\
		&+\Bigl(\frac{144}{x}+\frac{144}{1-x}
		+\frac{72}{x^{2}}+\frac{72}{(1-x)^{2}}
		+\frac{24}{x^{3}}+\frac{24}{(1-x)^{3}}\Bigr)f_o'(x)\,.
	\end{aligned}
\end{equation}
Finally, introducing
$
y(x)=f_o'(x)\,,
$
we are led to a fourth--order differential equation satisfied by $y$:
\begin{equation}\label{edo-y}
	\begin{aligned}
		0={}&
		2y^{(4)}(x)
		+\Bigl(\frac{6}{x}-\frac{6}{1-x}\Bigr)y^{(3)}(x)\\
		&-\Bigl(\frac{24}{x}+\frac{24}{1-x}
		+\frac{12}{x^{2}}+\frac{12}{(1-x)^{2}}\Bigr)y''(x)\\
		&+\Bigl(\frac{12}{x^{2}}+\frac{12}{x^{3}}
		-\frac{12}{(1-x)^{2}}-\frac{12}{(1-x)^{3}}\Bigr)y'(x)\\
		&+\Bigl(\frac{144}{x}+\frac{144}{1-x}
		+\frac{72}{x^{2}}+\frac{72}{(1-x)^{2}}
		+\frac{24}{x^{3}}+\frac{24}{(1-x)^{3}}\Bigr)y(x)\,.
	\end{aligned}
\end{equation}
A direct computation shows that the roots of the indicial equation are $-2$, $0$, $2$, and $3\,$.

\medskip\noindent
The function $y$ is even with respect to $x=\tfrac12$. 
From the previous computations, if we normalize by choosing 
$f_o'\!\left(\tfrac12\right)=1\,$, then
\[
y\!\left(\tfrac12\right)=1\,, 
\qquad
y'\!\left(\tfrac12\right)=0\,,
\qquad
y''\!\left(\tfrac12\right)=48\,.
\qquad
y^{(3)}\!\left(\tfrac12\right)=0\,.
\]
Using  \textsc{Mathematica}, the unique solution is given by
\begin{equation}\label{solution-y}
	y(x)= -1 + \frac{1}{4(x-1)^2} + \frac{1}{4x^2}\,.
\end{equation}
We recall that $y = f_0'$. Since $f_0$ is odd, we obtain
\[ f_0 (x) = -x + \frac{1}{4(1-x)} - \frac{1}{4x} + \frac{1}{2}\,,\]
and, since $f_e' = 2f_0\,$, there exists a real constant $C$ such that
\[f_e(x) = 2 \left( -\frac{x^2}{2} - \frac{1}{4} \ln(1-x) - \frac{1}{4} \ln(x)  + \frac{x}{2} \right) + C\,.\]
We thus obtain,
\begin{equation}\label{solution-f}
	f(x) = f_e(x) + f_0(x) = -x^2 - \frac{1}{2} \ln(1-x) - \frac{1}{2} \ln(x)  + \frac{1}{4(1-x)} - \frac{1}{4x} + C\, .
\end{equation}
This leads to a contradiction, since \(f=S_{0,1}[v_1]\) must belong to
\(L^2(0,1)\), whereas the function \eqref{solution-f} is not square integrable
near \(x=0\) and \(x=1\,\).
Consequently, the initial condition must satisfy
\[
f_o'\!\left(\tfrac12\right)=0\,.
\]
By the Cauchy--Lipschitz Theorem, the corresponding solution of the differential
equation then satisfies \(y\equiv 0 \,\).
Hence \(f_o\) is a constant, which must be zero since \(f_o\) is odd.
Therefore \(f\) itself is a constant function.

\vspace{0.2cm}\noindent
Since \(f=S_{0,1}[v_1]\,\), we recover \(v_1\) by applying the left inverse
\(S_{0,2}^*\) given in Lemma~\ref{akns_lemme_reduction_indice}(v), namely
\[
v_1(x)=S_{0,2}^*[f](x)
= f(x)-\frac{2}{x^{2}}\int_0^x t\,f(t)\,dt\, .
\]
A direct computation yields
\begin{equation}\label{eq:v1-explicitb}
	v_1(x)=0\,.
\end{equation}

\medskip\noindent
Thus, we have established the following injectivity result in the case $(1,2)$:

\begin{thm}[Injectivity for the pair $(1,2)$]
	\label{thm:injective-01c}
	For $(\kappa_1,\kappa_2)=(1,2)\,$, the Fr\'echet differential of the spectral map
	\[
	D_{(p,q)}\mathcal S_{1,2}(0,0)
	:
	L^2(0,1)\times L^2(0,1)
	\longrightarrow
	\ell^2_{\mathbb R}(\mathbb Z)\times \ell^2_{\mathbb R}(\mathbb Z)
	\]
	is one to one.
\end{thm}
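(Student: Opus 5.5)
The statement is Theorem~\ref{thm:injective-01c}, and the proof is essentially the computation just carried out, organized as follows. By the decoupling \eqref{eq:U-T-block}, the bounded isomorphism $\mathcal U$ reduces injectivity of $D_{(p,q)}\mathcal S_{1,2}(0,0)$ to showing that the system \eqref{eq:kernel-decoupled} with $(\kappa_1,\kappa_2)=(1,2)$ forces $v_1=0$ and $v_2=0$. The two components are independent, so I treat them separately. In each case I apply Theorem~\ref{mainthm} for $\kappa=1$ and for $\kappa=2$. This yields parity information on $A_\kappa(D)[T^j_\kappa[v_j]]$ with respect to $x=\tfrac12$: even for $j=1$, odd for $j=2$.

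For $v_2$, I set $f=S_{0,2}[v_2]$. The $\kappa=1$ condition $-f'+2f$ odd forces $f=\tfrac12 f_o'+f_o$, with $f_o$ the odd part of $f$. For $\kappa=2$, I write $4A_2(D)[-T_2^2 v_2]$ explicitly using $S_{1,2}$; the result is the expression \eqref{parity1}.

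1. Evaluating this odd function, and its second derivative, at $\tfrac12$ gives $f_o^{(3)}(\tfrac12)=f_o^{(5)}(\tfrac12)=0$.
2. Differentiating four more times removes the nonlocal integral term, by Lemma~\ref{ODESl}. The condition $G(x)+G(1-x)=0$ then gives a fourth-order linear ODE for $y=f_o^{(3)}$, regular at $x=\tfrac12$.
3. Since $y$ is even and its first four derivatives vanish at $\tfrac12$, Cauchy--Lipschitz gives $y\equiv0$. Hence $f_o=a(x-\tfrac12)$ and $f=ax$.
4. Applying the inverse $S_{0,1}^*$ from Lemma~\ref{akns_lemme_reduction_indice}(v) gives $v_2=ax-\tfrac{2}{x}\cdot\tfrac{a x^2}{2}=0$.

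For $v_1$, I set $f=S_{0,1}[v_1]$. The $\kappa=1$ condition gives $f=f_e+\tfrac12 f_e'$, equivalently $f_o=\tfrac12 f_e'$. For $\kappa=2$, I compute $4DA_2(D)[S_{1,1}f]$, which is odd.

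1. Evaluating at $\tfrac12$ gives the relation $f_o^{(3)}(\tfrac12)=48f_o'(\tfrac12)$.
2. Two more derivatives remove the integral term and give $H$. Writing $H$ in terms of $f_o$ and imposing $H(x)+H(1-x)=0$ yields a fourth-order ODE for $y=f_o'$. This equation is regular on $(0,1)$ and singular only at the endpoints.
3. Since $y$ is even, the Cauchy data at $\tfrac12$ form a one-parameter family $(y,y',y'',y''')(\tfrac12)=f_o'(\tfrac12)\,(1,0,48,0)$.

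The main obstacle is ruling out this one-parameter family, i.e.\ showing that the nonzero normalized solution is incompatible with $L^2$. I would first try to guess a closed form from the indicial roots $\{-2,0,2,3\}$ at $x=0$ and, by symmetry, at $x=1$. A natural candidate is $y=-1+\tfrac1{4x^2}+\tfrac1{4(1-x)^2}$; I would verify it solves the ODE with the prescribed data. Integrating, using that $f_o$ is odd and $f_e'=2f_o$, gives $f$ with terms $\tfrac{1}{4(1-x)}-\tfrac1{4x}$, which is not in $L^2$. This contradicts $f=S_{0,1}[v_1]\in L^2$ by Lemma~\ref{akns_lemme_reduction_indice}(iii). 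If no closed form were available, I would instead argue from the Frobenius exponent $-2$ at $0$. I would show that the even solution with this data has a nonzero $x^{-2}$ coefficient. This is the delicate point, since it requires global information connecting the data at $\tfrac12$ to the endpoint. In either case $f_o'(\tfrac12)=0$, so $y\equiv0$ and $f_o=0$. Then $f_e'=0$, so $f$ is constant, and $v_1=S_{0,2}^*[f]=c-\tfrac{2}{x^2}\cdot\tfrac{cx^2}{2}=0$.
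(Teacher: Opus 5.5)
Your proposal is correct and follows the paper's proof essentially step for step. It uses the same reductions $f=\tfrac12 f_o'+f_o$ (resp.\ $f_o=\tfrac12 f_e'$) coming from $\kappa=1$, the same midpoint evaluations coming from $\kappa=2$, the same fourth-order symmetric ODEs for $y=f_o^{(3)}$ (resp.\ $y=f_o'$), and the same explicit solution $y=-1+\frac{1}{4x^2}+\frac{1}{4(1-x)^2}$, whose primitive forces $f\notin L^2$. Your plan to check that closed form by hand instead of with \textsc{Mathematica} also works: with $u=1/x$, $w=1/(1-x)$ and the relation $u+w=uw$, the ODE reduces to an identity.
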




\subsection{Injectivity of the differential for the pair $(\kappa_1, \kappa_2)=(0,3)$}

Throughout this subsection, we consider  $(v_1,v_2) \in L^2(0,1)^2$ satisfying the
linearized spectral constraint
\[
D_{(p,q)}\lambda_{\kappa,n}(0,0)\cdot (v_1,v_2)=0\,,
\qquad n\in\mathbb{Z},
\]
simultaneously for the two effective angular momenta $\kappa=0$ and $\kappa=3$.

\medskip\noindent
In the case $\kappa=0$, one has, as in the previous section, that $v_1$ is even and
$v_2$ is odd with respect to the midpoint $x=\tfrac12\,$.

\medskip\noindent
We now turn to the case $\kappa = 3\,$.
According to Theorem~\ref{mainthm}, and setting $w_j := T_3^{\,j}[v_j]$,
$j=1,2$, we have
\begin{equation}\label{eq:schrodinger_condition_l3}
	A_3(D)\, \bigl[w_j\bigr]
	\quad \text{is even if $j=1$, and odd if $j=2\,$,}
\end{equation}
where
\[
A_3(t) = -\tfrac{1}{8}t^3 + \tfrac{3}{2}t^2 - \tfrac{15}{2}t + 15\,,
\qquad D=\tfrac{d}{dx}\,.
\]

\vspace{0.3cm}\noindent
We first analyze the case $j=2$. The relevant transformation operator can be
written explicitly. For $x\in(0,1)$, one has
\begin{equation}\label{eq:T3-formulas}
	\begin{aligned}
		T_3^2[v_2](x)
		&=
		v_2(x)
		-6x\int_x^1\frac{v_2(t)}{t^2}\,dt
		+48x^3\int_x^1\frac{v_2(t)}{t^4}\,dt
		-60x^5\int_x^1\frac{v_2(t)}{t^6}\,dt\,.
	\end{aligned}
\end{equation}
Moreover, introducing the differential operator $D=\frac{d}{dx}$ and the polynomial
\[
A_3(t)=-\tfrac18 t^3+\tfrac32 t^2-\tfrac{15}{2}t+15\,,
\qquad\text{so that}\qquad
8A_3(D)=-D^3+12D^2-60D+120\,,
\]
we obtain the following explicit identity

\begin{equation}\label{eq:8A3D-T32}
	\begin{aligned}
		F(x) &= 8A_3(D)\bigl[T_3^2[v_2]\bigr](x)
		\\
		&= -\,v_2^{(3)}(x)
		+\Bigl(12-\frac{18}{x}\Bigr)v_2''(x)
		+\Bigl(-60+\frac{216}{x}-\frac{126}{x^{2}}\Bigr)v_2'(x)
		\\
		&\quad
		+\Bigl(120-\frac{1080}{x}+\frac{1728}{x^{2}}-\frac{624}{x^{3}}\Bigr)v_2(x)
		+360(1-2x)\int_x^1\frac{v_2(t)}{t^{2}}\,dt
		\\
		&\quad
		+288\bigl(20x^{3}-30x^{2}+12x-1\bigr)\int_x^1\frac{v_2(t)}{t^{4}}\,dt
		\\
		&\quad
		-3600x^{2}\bigl(2x^{3}-5x^{2}+4x-1\bigr)\int_x^1\frac{v_2(t)}{t^{6}}\,dt\,.
	\end{aligned}
\end{equation}
Evaluating \eqref{eq:8A3D-T32} at $x=\tfrac12\,$, we use that the left-hand side
is odd with respect to $\tfrac12$, hence it vanishes at $x=\tfrac12$.
Since $v_2$ is also odd, one has
\[
v_2\!\left(\tfrac12\right)=0\,,
\qquad
v_2''\!\left(\tfrac12\right)=0\,,
\]
Thus,
\[
v_2^{(3)}\!\left(\tfrac12\right)=-132\,v_2'\!\left(\tfrac12\right)\,.
\]

\vspace{0.2cm}\noindent
We now compute the second derivative of \(F(x)=8A_3(D)\bigl[T_3^2[v_2]\bigr](x)\)\,.
Differentiating~\eqref{eq:8A3D-T32} twice, we obtain
\begin{equation}\label{eq:Fpp}
	\begin{aligned}
		F''(x)
		&=
		-\,v_2^{(5)}(x)
		+\Bigl(12-\frac{18}{x}\Bigr)v_2^{(4)}(x)
		+\Bigl(-60+\frac{216}{x}-\frac{90}{x^{2}}\Bigr)v_2^{(3)}(x)
		\\
		&\quad
		+\Bigl(120-\frac{1080}{x}+\frac{1296}{x^{2}}-\frac{156}{x^{3}}\Bigr)v_2''(x)
		\\
		&\quad
		+\frac{36\bigl(60x^{3}-210x^{2}+124x-9\bigr)}{x^{4}}\,v_2'(x)
		+\frac{1440\bigl(12x^{3}-26x^{2}+12x-1\bigr)}{x^{5}}\,v_2(x)
		\\
		&\quad
		+17280(2x-1)\int_x^1\frac{v_2(t)}{t^{4}}\,dt
		-7200\bigl(20x^{3}-30x^{2}+12x-1\bigr)\int_x^1\frac{v_2(t)}{t^{6}}\,dt\,.
	\end{aligned}
\end{equation}
Evaluating~\eqref{eq:Fpp} at \(x=\tfrac12\) and using the oddness of \(F\) and \(v_2\),
all nonlocal terms vanish and we obtain
\[
0
=
4608\,v_2'\!\left(\tfrac12\right)
+12\,v_2^{(3)}\!\left(\tfrac12\right)
- v_2^{(5)}\!\left(\tfrac12\right)\,.
\]

\vspace{0.2cm}\noindent
We now differentiate~\eqref{eq:Fpp} twice. This yields
\begin{equation}\label{eq:F4}
	\begin{aligned}
		F^{(4)}(x)
		&=
		-\,v_2^{(7)}(x)
		+\Bigl(12-\frac{18}{x}\Bigr)v_2^{(6)}(x)
		+\Bigl(-60+\frac{216}{x}-\frac{54}{x^{2}}\Bigr)v_2^{(5)}(x)
		\\
		&\quad
		+\Bigl(120-\frac{1080}{x}+\frac{864}{x^{2}}+\frac{168}{x^{3}}\Bigr)v_2^{(4)}(x)
		\\
		&\quad
		+\Bigl(\frac{2160}{x}-\frac{5400}{x^{2}}-\frac{288}{x^{3}}+\frac{72}{x^{4}}\Bigr)v_2^{(3)}(x)
		\\
		&\quad
		+\Bigl(\frac{12960}{x^{2}}-\frac{9360}{x^{3}}-\frac{1728}{x^{4}}-\frac{720}{x^{5}}\Bigr)v_2''(x)
		\\
		&\quad
		+\Bigl(\frac{44640}{x^{3}}-\frac{19440}{x^{4}}+\frac{1728}{x^{5}}+\frac{720}{x^{6}}\Bigr)v_2'(x)
		+\Bigl(\frac{172800}{x^{4}}-\frac{86400}{x^{5}}\Bigr)v_2(x)
		\\
		&\quad
		+432000\,(1-2x)\int_x^1\frac{v_2(t)}{t^{6}}\,dt\,.
	\end{aligned}
\end{equation}
Evaluating at \(x=\tfrac12\,\), since \(F\) is odd with respect to \(x=\tfrac12\) one has
\(F^{(4)}(\tfrac12)=0\), and the nonlocal term vanishes. Moreover, since \(v_2\) is
also odd with respect to \(x=\tfrac12\), we have
\(v_2(\tfrac12)=v_2''(\tfrac12)=v_2^{(4)}(\tfrac12)=v_2^{(6)}(\tfrac12)=0\)\,.
Therefore,
\[
0
=
-\,v_2^{(7)}\!\left(\tfrac12\right)
+156\,v_2^{(5)}\!\left(\tfrac12\right)
-18432\,v_2^{(3)}\!\left(\tfrac12\right)
+147456\,v_2'\!\left(\tfrac12\right)\,.
\]

\vspace{0.2cm}\noindent
Differentiating twice~\eqref{eq:F4} and collecting terms, we obtain
\begin{equation}\label{eq:F6}
	\begin{aligned}
		F^{(6)}(x)
		&=
		-\,v_2^{(9)}(x)
		+\Bigl(12-\frac{18}{x}\Bigr)v_2^{(8)}(x)
		+\Bigl(-60+\frac{216}{x}-\frac{18}{x^{2}}\Bigr)v_2^{(7)}(x)
		\\
		&\quad
		+\Bigl(120-\frac{1080}{x}+\frac{432}{x^{2}}+\frac{348}{x^{3}}\Bigr)v_2^{(6)}(x)
		\\
		&\quad
		+\Bigl(\frac{2160}{x}-\frac{3240}{x^{2}}-\frac{3312}{x^{3}}-\frac{1260}{x^{4}}\Bigr)v_2^{(5)}(x)
		\\
		&\quad
		+\Bigl(\frac{8640}{x^{2}}+\frac{10080}{x^{3}}+\frac{5184}{x^{4}}+\frac{720}{x^{5}}\Bigr)v_2^{(4)}(x)
		\\
		&\quad
		+\Bigl(-\frac{2880}{x^{3}}+\frac{4320}{x^{4}}+\frac{12096}{x^{5}}+\frac{9360}{x^{6}}\Bigr)v_2^{(3)}(x)
		\\
		&\quad
		+\Bigl(-\frac{17280}{x^{4}}-\frac{43200}{x^{5}}-\frac{51840}{x^{6}}-\frac{30240}{x^{7}}\Bigr)v_2''(x)
		\\
		&\quad
		+\Bigl(\frac{17280}{x^{5}}+\frac{43200}{x^{6}}+\frac{51840}{x^{7}}+\frac{30240}{x^{8}}\Bigr)v_2'(x)\,.
	\end{aligned}
\end{equation}

\vspace{0.2cm}\noindent
Let \(G(x)=F^{(6)}(x)\,\). Since \(F\) is odd with respect to \(x=\tfrac12\), the same holds for \(G\),
and therefore
\[
G(x)+G(1-x)=0\,,\qquad x\in(0,1)\,.
\]
Using that \(v_2\) is also odd  and applying the reduction obtained above,
we can rewrite the symmetry identity \(G(x)+G(1-x)=0\) as a linear ODE for
\[
w:=v_2'.
\]
This yields the following eighth--order symmetry equation:
\begin{equation}\label{EDO-w}
	\begin{aligned}
		&\quad -2\,w^{(8)}(x)
		\\[0.6em]
		&\quad
		-\Biggl(
		\frac{18}{x}-\frac{18}{1-x}
		\Biggr) w^{(7)}(x)
		\\[0.6em]
		&\quad
		+\Biggl(
		-120
		+216\Bigl(\frac{1}{x}+\frac{1}{1-x}\Bigr)
		-18\Bigl(\frac{1}{x^{2}}+\frac{1}{(1-x)^{2}}\Bigr)
		\Biggr) w^{(6)}(x)
		\\[0.6em]
		&\quad
		-\Biggl(
		\frac{1080}{x}-\frac{1080}{1-x}
		-\frac{432}{x^{2}}+\frac{432}{(1-x)^{2}}
		-\frac{348}{x^{3}}+\frac{348}{(1-x)^{3}}
		\Biggr) w^{(5)}(x)
		\\[0.6em]
		&\quad
		+\Biggl(
		\frac{2160}{x}+\frac{2160}{1-x}
		-\frac{3240}{x^{2}}-\frac{3240}{(1-x)^{2}}
		-\frac{3312}{x^{3}}-\frac{3312}{(1-x)^{3}}
		-\frac{1260}{x^{4}}-\frac{1260}{(1-x)^{4}}
		\Biggr) w^{(4)}(x)
		\\[0.6em]
		&\quad
		+\Biggl(
		\frac{8640}{x^{2}}-\frac{8640}{(1-x)^{2}}
		+\frac{10080}{x^{3}}-\frac{10080}{(1-x)^{3}}
		+\frac{5184}{x^{4}}-\frac{5184}{(1-x)^{4}}
		+\frac{720}{x^{5}}-\frac{720}{(1-x)^{5}}
		\Biggr) w^{(3)}(x)
		\\[0.6em]
		&\quad
		+\Biggl(
		-\frac{2880}{x^{3}}-\frac{2880}{(1-x)^{3}}
		+\frac{4320}{x^{4}}+\frac{4320}{(1-x)^{4}}
		+\frac{12096}{x^{5}}+\frac{12096}{(1-x)^{5}}
		+\frac{9360}{x^{6}}+\frac{9360}{(1-x)^{6}}
		\Biggr) w''(x)
		\\[0.6em]
		&\quad
		+\Biggl(
		-\frac{17280}{x^{4}}+\frac{17280}{(1-x)^{4}}
		-\frac{43200}{x^{5}}+\frac{43200}{(1-x)^{5}}
		-\frac{51840}{x^{6}}+\frac{51840}{(1-x)^{6}}
		-\frac{30240}{x^{7}}+\frac{30240}{(1-x)^{7}}
		\Biggr) w'(x)
		\\[0.6em]
		&\quad
		+\Biggl(
		\frac{17280}{x^{5}}+\frac{17280}{(1-x)^{5}}
		+\frac{43200}{x^{6}}+\frac{43200}{(1-x)^{6}}
		+\frac{51840}{x^{7}}+\frac{51840}{(1-x)^{7}}
		+\frac{30240}{x^{8}}+\frac{30240}{(1-x)^{8}}
		\Biggr) w(x)
		=0\,.
	\end{aligned}
\end{equation}

\vspace{0.2cm}\noindent
The  indicial roots at $x=0$ are
\[
r\in\{7,6,5,3,1,-1\}\ \cup\ \{-1\pm i\sqrt{23}\}\,.
\]

\medskip
\noindent
From the previous analysis, solutions of this ODE are uniquely determined by the
single parameter $v_2'\!\left(\tfrac12\right)$. For instance, imposing
the normalization
\[
w\!\left(\tfrac12\right)= v_2'\!\left(\tfrac12\right)=1
\]
gives us a solution $w=v_2'$ of \eqref{EDO-w} satisfying  the differential constraints at $x=\tfrac12\,$,
\[
v_2^{(3)}\!\left(\tfrac12\right)=-132\,,\qquad
v_2^{(5)}\!\left(\tfrac12\right)=3024\,,\qquad
v_2^{(7)}\!\left(\tfrac12\right)=3052224\,,
\]
which follow from the symmetry relations derived above. 

\medskip
\noindent
To gain further insight into the behaviour of solutions, we performed a numerical
integration of equation \eqref{EDO-w} using \textsc{Mathematica}.  Starting from
the normalization $v_2'\!\left(\tfrac12\right)=1$ together with the differential
constraints above, the resulting functions $w(x)=v_2'(x)$ and $v_2(x)$ are
displayed in Figure~\ref{fig:num-wv2}.

\begin{figure}[h]
	\centering
	\begin{minipage}{0.48\textwidth}
		\centering
		\includegraphics[width=\textwidth]{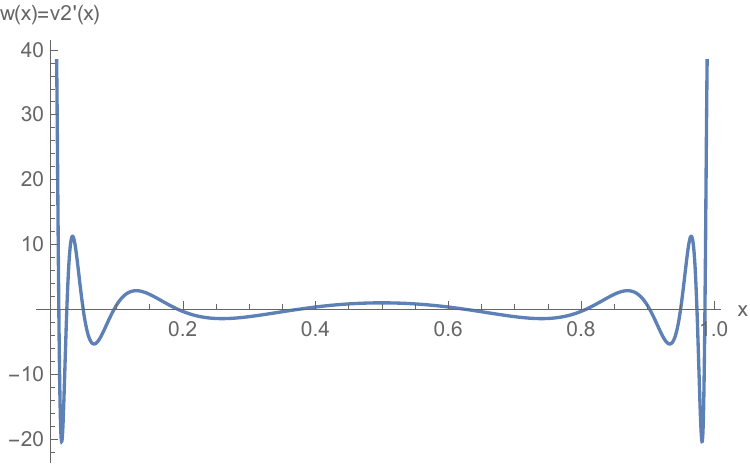}
	\end{minipage}
	\hfill
	\begin{minipage}{0.48\textwidth}
		\centering
		\includegraphics[width=\textwidth]{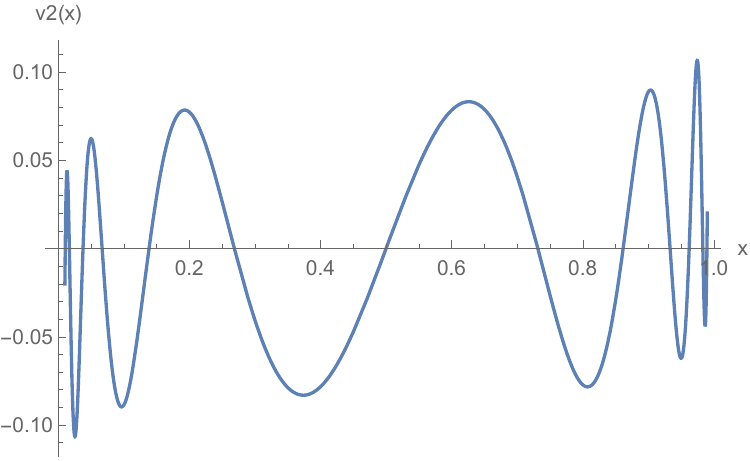}
	\end{minipage}
	\caption{Numerical solutions $w(x)=v_2'(x)$ (left) and $v_2(x)$ (right),
		computed with \textit{Mathematica}.}
	\label{fig:num-wv2}
\end{figure}

\noindent
The qualitative behaviour of $w$ is consistent with the structure of the
indicial roots. In particular, the complex pair $-1\pm i\sqrt{23}$ produces
oscillatory components in the local behaviour near the singular endpoints.

\medskip
\noindent
On the other hand, when $\kappa=3$, the previous analysis (see \eqref{eq:single-ell-kernel}) yields the additional
constraint
\[
\int_0^1 v_2(x)\,x^6\,dx=0\,.
\]
However, using the numerical solution corresponding to the normalization
$v_2'\!\left(\tfrac12\right)=1\,$, we obtain
\[
\int_0^1 v_2(x)\,x^6\,dx \approx 1.6\times10^{-1}\,,
\]
which is clearly non--zero. This leads to a numerical contradiction. Consequently, one must have
\[
v_2'\!\left(\tfrac12\right)=0\,.
\]
By the Cauchy--Lipschitz theorem applied to equation \eqref{EDO-w}, this
implies that $w\equiv0$. Since $v_2$ is odd with respect to $x=\tfrac12\,$, it
follows that $v_2\equiv0$\,.


\vspace{0.5cm}\noindent
We now turn to the case $j=1\,$. The analysis is completely analogous to the case
$j=2\,$. Since $v_1$ and $A_3(D)\bigl[T_3^1[v_1]\bigr]$ are even with respect to
$x=\tfrac12\,$, the successive symmetry identities at $x=\tfrac12$ determine all
higher even derivatives of $v_1$ from the two parameters
$v_1\!\left(\tfrac12\right)$ and $v_1''\!\left(\tfrac12\right)\,$, while all odd
derivatives vanish at $x=\tfrac12\,$. Namely,
\[
v_1^{(4)}\!\left(\tfrac12\right)=12\,v_1''\!\left(\tfrac12\right)+4608\,v_1\!\left(\tfrac12\right)\,.
\]
\[
v_1^{(6)}\!\left(\tfrac12\right)
=
156\,v_1^{(4)}\!\left(\tfrac12\right)
-18432\,v_1''\!\left(\tfrac12\right)
+147456\,v_1\!\left(\tfrac12\right)\,.
\]
Then, proceeding exactly as in the case $j=2$, one obtains the same eighth--order
symmetry equation as above, with $w$ replaced by $v_1$. Hence, once the two parameters
$v_1\!\left(\tfrac12\right)$ and $v_1''\!\left(\tfrac12\right)$ are fixed,
all higher derivatives at $x=\tfrac12$ are uniquely determined, and the
Cauchy--Lipschitz theorem yields a unique local even solution.

\medskip\noindent
We therefore introduce the two fundamental even solutions corresponding to the
initial data
\[
u:\quad \bigl(v_1(\tfrac12),v_1''(\tfrac12)\bigr)=(1,0)\,,
\qquad
v:\quad \bigl(v_1(\tfrac12),v_1''(\tfrac12)\bigr)=(0,1)\,.
\]
Any even solution of the symmetry equation is then a linear combination
\[
v_1=\alpha\,u+\beta\,v \,.
\]
To understand the behaviour of these solutions near $x=0$, we performed a
numerical Frobenius analysis using \textsc{Mathematica}. Starting from the
two independent even solutions $(u,v)$, we integrate the eighth--order equation
numerically on $(0,1)$\,.

\medskip\noindent
We have previously seen that any local solution near $x=0$ is expected to have
an expansion of the form
\[
v_1(x)
=
x^{-1}\!\left(
A + B\cos(\sqrt{23}\log x)
+ C\sin(\sqrt{23}\log x)
\right)
+ O(x)\,.
\]
For the pair $(u,v)$, \textsc{Mathematica} produces the numerical triples
\[
(A_u,B_u,C_u)=(0.700136,-0.0937512,-0.0416037)\,,
\]
\[
(A_v,B_v,C_v)=(0.00529329,0.00201729,-0.00117865)\,.
\]
To test whether a linear combination of these solutions could cancel the
leading singular behaviour, we solve numerically
\[
\alpha (A_u,B_u,C_u)+\beta (A_v,B_v,C_v)=(0,0,0)\,.
\]
The computation yields only the trivial solution $\alpha=\beta=0$.
Consequently, the only solution which is $L^2$ at both endpoints $x=0$ and $x=1$
is the trivial one. This numerical analysis leads to the following theorem.

\begin{thm}[Injectivity for the pair $(0,3)$]
	\label{thm:noninj-03}
	For $(\kappa_1,\kappa_2)=(0,3)\,$, the Fr\'echet differential of the spectral map
	\[
	D_{(p,q)}\mathcal S_{0,3}(0,0)\,
	:\,
	L^2(0,1)\times L^2(0,1)
	\longrightarrow
	\ell^2_{\mathbb R}(\mathbb Z)\times \ell^2_{\mathbb R}(\mathbb Z)
	\]
	is  one to one.
\end{thm}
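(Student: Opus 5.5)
Via the decoupling \eqref{eq:U-T-block}, the kernel of $D_{(p,q)}\mathcal S_{0,3}(0,0)$ consists of the pairs $(v_1,v_2)$ satisfying \eqref{eq:kernel-decoupled} for $\kappa=0$ and $\kappa=3$. The two components separate, so I would treat $v_2$ and $v_1$ one after the other. From the case $\kappa=0$ (end of Section~\ref{kappaegal0}), $v_1$ is even and $v_2$ is odd with respect to $x=\tfrac12$. Theorem~\ref{mainthm} with $\kappa=3$ then says that $A_3(D)[T_3^1 v_1]$ is even and $A_3(D)[T_3^2 v_2]$ is odd. The argument would mirror the $(0,1)$ and $(0,2)$ cases.

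For $v_2$, I would write $F=8A_3(D)[T_3^2 v_2]$ explicitly using \eqref{eq:T3-formulas}. Then I would differentiate it $0,2,4$ times. Each time I would evaluate at $x=\tfrac12$, where the nonlocal integral terms carry a factor vanishing at $\tfrac12$. This expresses $v_2^{(3)},v_2^{(5)},v_2^{(7)}$ at $\tfrac12$ in terms of $v_2'(\tfrac12)$. After six derivatives, Lemma~\ref{ODESl} removes all the integral terms, and the parity relation $F^{(6)}(x)+F^{(6)}(1-x)=0$, combined with the evenness of $w=v_2'$, gives the eighth-order ODE \eqref{EDO-w}. By Cauchy--Lipschitz at $x=\tfrac12$, the solution space compatible with all the constraints is then one-dimensional and spanned by the solution with $v_2'(\tfrac12)=1$. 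To kill this last degree of freedom, I would use the moment constraint $\int_0^1 x^6 v_2=0$ coming from the zero eigenvalue. Alternatively, I would use square-integrability at $0$: the indicial roots $\{-1,-1\pm i\sqrt{23},1,3,5,6,7\}$ show that a generic solution for $w$ behaves like $x^{-1}$, so $v_2\sim \log x$, which lies in $L^2$. That is why the integrability argument alone does not suffice here and the moment condition is needed. Showing that the moment is nonzero for the normalized solution forces $v_2'(\tfrac12)=0$, hence $w\equiv0$, hence $v_2\equiv0$ by oddness.

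For $v_1$, the same procedure with $DA_3(D)[T_3^1 v_1]$ (which is odd) leaves two free parameters, $v_1(\tfrac12)$ and $v_1''(\tfrac12)$. The higher even derivatives at $\tfrac12$ are fixed linearly by these two, and $v_1$ satisfies the same eighth-order equation. Near $x=0$, the Frobenius expansion $v_1=x^{-1}\bigl(A+B\cos(\sqrt{23}\log x)+C\sin(\sqrt{23}\log x)\bigr)+O(x)$ shows that $v_1\in L^2$ requires $A=B=C=0$. For the two fundamental even solutions $u,v$, this gives a $3\times2$ linear system for $(\alpha,\beta)$. I would show that it has rank $2$, so $v_1=0$. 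By evenness, the endpoint $x=1$ gives nothing new.

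The main obstacle is the nondegeneracy of these two final conditions: the nonvanishing of $\int_0^1 x^6 v_2$ for the normalized $v_2$, and the rank-$2$ property of the Frobenius coefficients. Both require global information about an eighth-order singular ODE. I would first look for closed-form solutions, since such solutions existed, rational or logarithmic, in the cases $(0,1)$ and $(0,2)$. If none exist, I would fall back on a rigorous numerical integration with interval error bounds, or on the numerical values computed above.
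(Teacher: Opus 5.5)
Your proposal is correct and follows the paper's proof essentially step by step. The ingredients are the same: the midpoint relations from $F$, $F''$, $F^{(4)}$ at $x=\tfrac12$; the eighth-order symmetry equation from $F^{(6)}$; the moment condition $\int_0^1 x^6 v_2\,dx=0$ to eliminate the one-parameter family for $v_2$; and the $3\times 2$ Frobenius system for the two-parameter even family for $v_1$. Your remark that $v_2$ grows at most like $\log x$, hence lies in $L^2$, correctly explains why square-integrability alone cannot rule out the $v_2$ family and the moment condition is needed. As in the paper, the two final nondegeneracy facts are established only numerically, and you rightly single them out as the steps that would need a closed-form or rigorously certified computation.
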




\section{Closed range of the linearized spectral map}

In this section, we study the Fr\'echet differential of the spectral map at the
zero potential and prove that its range is closed when $\kappa_2-\kappa_1$ is odd.
For the corresponding radial Schr\"odinger problem, the closed--range property
was established by  Carlson-Shubin and Shubin-Christ see, e.g.,
\cite{CarlShu94, Sh}.

\subsection{Preliminaries}

\medskip\noindent
Let
\[
S:=D_{(p,q)}\mathcal S_{\kappa_1,\kappa_2}(0,0).
\]
We briefly recall the notation introduced earlier. For each
$\kappa\in\{\kappa_1,\kappa_2\}$ and $n\ge1$ (with $\nu=\kappa+\tfrac12$), we introduce the
linear functionals
\[
A_{\kappa,n}(v_1)
:=
\int_0^1
2 j_{\nu,n} x\,J_{\nu-1}\!\bigl(j_{\nu,n}x\bigr)\,
J_{\nu}\!\bigl(j_{\nu,n}x\bigr)\,v_1(x)\,dx\,,
\]
\[
B_{\kappa,n}(v_2)
:=
\int_0^1
j_{\nu,n} x
\Bigl(
J_{\nu}\!\bigl(j_{\nu,n}x\bigr)^2
-
J_{\nu-1}\!\bigl(j_{\nu,n}x\bigr)^2
\Bigr)\,v_2(x)\,dx\,,
\]
which define bounded linear functionals on $L^2(0,1)$.

\medskip
\noindent
Using Lemma~\ref{akns-akns_lemme_bessel_sinus} and
Remark~\ref{separation}, these linear forms admit the following
transformation--operator representations: for $n\ge1$, 
\begin{align}
	A_{\kappa,n}(v_1)
	&=-
	\frac{2}{\pi}
	\int_0^1
	\Phi_{\kappa,1}\!\bigl(j_{\nu,n}x\bigr)\,v_1(x)\,dx
	\label{eq:Aelln-Phi}
	\\
	&= -
	\frac{2}{\pi}
	\int_0^1
	\sin\!\bigl(2j_{\nu,n}x\bigr)\,
	T_\kappa^{\,1}(v_1)(x)\,dx\,,
	\label{eq:Aelln-sin}
	\\[0.4em]
	B_{\kappa,n}(v_2)
	&=
	\frac{2}{\pi}
	\int_0^1
	\Phi_{\kappa,2}\!\bigl(j_{\nu,n}x\bigr)\,v_2(x)\,dx
	\label{eq:Belln-Phi}
	\\
	&=
	\frac{2}{\pi}
	\int_0^1
	\cos\!\bigl(2j_{\nu,n}x\bigr)\,
	T_\kappa^{\,2}(v_2)(x)\,dx\,.
	\label{eq:Belln-cos}
\end{align}

\medskip\noindent
As shown in Section~\ref{subsec:decouple}, there exists a bounded isomorphism
\[
\mathcal U:\ell^2(\mathbb Z)\times\ell^2(\mathbb Z)
\longrightarrow
 \R^2 \times \ell^2(\mathbb N^*)^2 \times\ell^2(\mathbb N^*)^2
\]
such that $\mathcal U\circ S$ is block diagonal:
\[
(\mathcal U\circ S)(v_1,v_2)
=
\bigl(\mathcal M(v_2),\;\mathcal A_{\kappa_1,\kappa_2}(v_1)\,,\;
\mathcal B_{\kappa_1,\kappa_2}(v_2)\bigr)\,,
\]
where
\[
\mathcal A_{\kappa_1,\kappa_2}(v_1)
=
\bigl((A_{\kappa_1,n}(v_1))_{n\ge1},\;(A_{\kappa_2,n}(v_1))_{n\ge1}\bigr)\,,
\]
\[
\mathcal B_{\kappa_1,\kappa_2}(v_2)
=
\bigl((B_{\kappa_1,n}(v_2))_{n\ge1},\;(B_{\kappa_2,n}(v_2))_{n\ge1}\bigr)\,,
\]
\[
\mathcal M(v_2)
=
\left(-\int_0^1 x^{2\kappa_1}v_2,\;
-\int_0^1 x^{2\kappa_2}v_2\right)\,.
\]

\medskip\noindent
Since $\mathcal U$ is an isomorphism, the range of $S$ is closed if and only if
the range of $\mathcal U\circ S$ is closed. The closed--range property thus
reduces to the independent analysis of $\mathcal M$,
$\mathcal A_{\kappa_1,\kappa_2}$ and $\mathcal B_{\kappa_1,\kappa_2}\,$.

\subsection{Strategy of the proof}
\label{subsec:strategy}

We now outline the strategy used to prove the closed--range property.
For simplicity, we present the argument in the model case
$(\kappa_1,\kappa_2)=(0,1)\,$, the general case is identical.

\medskip
\noindent
We approximate the operator $\mathcal A_{\kappa_1,\kappa_2}$ by replacing the
Bessel zeros $j_{\nu,n}$ with their leading asymptotics, namely
\[
j_{\frac12,n}\sim n\pi \quad (\kappa=0)\,,
\qquad
j_{\frac32,n}\sim (n+\tfrac12)\pi \quad (\kappa=1)\,.
\]
This leads to a Fourier--type model operator whose kernels involve pure sine
functions with frequencies $2n\pi$ and $2(n+\tfrac12)\pi\,$. We show that this
model operator is injective with closed range, hence semi--Fredholm.

\medskip
\noindent
The difference between the original operator and the Fourier model operator
is compact. The proof of this fact is identical to the one given in
Appendix~B of~\cite{GGHN2025}, and we therefore omit the details.
Since the semi--Fredholm property is stable under compact perturbations,
it follows that $\mathcal A_{\kappa_1,\kappa_2}$ has closed range.

\medskip
\noindent
Applying the same argument to the family $(B_{\kappa,n})_{n\ge1}$ shows that the
block operator $(\mathcal M,\mathcal B_{\kappa_1,\kappa_2})$ also has closed range, and this proves that the differential $S$
has closed range.

\subsection{Trigonometric model}
\label{subsec:trig-model}

Using the sine representation~\eqref{eq:Aelln-sin}, we introduce a
Fourier--type model operator corresponding to $(\kappa_1,\kappa_2)=(0,1)$:
\[
\mathcal A^{(0)}_{0,1}(v_1)
=
\Bigl(
(\widetilde A_{0,n}(v_1))_{n\ge1}\,,
(\widetilde A_{1,n}(v_1))_{n\ge1}
\Bigr)\,,
\]
where
\begin{align*}
\widetilde A_{0,n}(v_1)
&=
\frac{2}{\pi}
\int_0^1
\sin(2n\pi x)\,v_1(x)\,dx\,,
\\[0.4em]
\widetilde A_{1,n}(v_1)
&=
-\frac{2}{\pi}
\int_0^1
\sin\!\bigl((2n+1)\pi x\bigr)\,
\bigl(S_{0,1} [v_1]\bigr)(x)\,dx\,.
\end{align*}
Similarly, using the cosine representation~\eqref{eq:Belln-cos}, we define
\[
\mathcal B^{(0)}_{0,1}(v_2)
=
\Bigl(
(\widetilde B_{0,n}(v_2))_{n\ge1}\,,
(\widetilde B_{1,n}(v_2))_{n\ge1}
\Bigr)\,,
\]
with
\begin{align*}
\widetilde B_{0,n}(v_2)
&=
\frac{2}{\pi}
\int_0^1
\cos(2n\pi x)\,v_2(x)\,dx\,,
\\[0.4em]
\widetilde B_{1,n}(v_2)
&=
\frac{2}{\pi}
\int_0^1
\cos\!\bigl((2n+1)\pi x\bigr)\,
\bigl(S_{0,1}[v_2]\bigr)(x)\,dx\,.
\end{align*}

\medskip
\noindent
From now on, we focus on $\mathcal A^{(0)}_{0,1}$ and, for simplicity, we
write $v$ instead of $v_1$. Using the trigonometric form above, we consider
\[
\|\mathcal A^{(0)}_{0,1}v\|_{\ell^2(\N^*)\times\ell^2(\N^*)}^2
=
\sum_{n\ge1} |\widetilde A_{0,n}(v)|^2
+
\sum_{n\ge1} |\widetilde A_{1,n}(v)|^2\,.
\]
\medskip
\noindent
The first term corresponds to the classical sine Fourier coefficients
\[
\widetilde A_{0,n}(v)
=
\frac{2}{\pi}\int_0^1 \sin(2n\pi x)\,v(x)\,dx\,.
\]
Since $\sin(2n\pi x)$ is odd with respect to $x=\tfrac12\,$, only the odd part of
$v$ contributes. By Parseval's identity,
\[
\sum_{n\ge1} |\widetilde A_{0,n}(v)|^2
=
\sum_{n\ge1} |\widetilde A_{0,n}(v_{\mathrm{odd}})|^2
=
\frac{2}{\pi^2}\,\|v_{\mathrm{odd}}\|_{L^2(0,1)}^2\,,
\]
where $v_{\mathrm{odd}}(x):=\tfrac12\bigl(v(x)-v(1-x)\bigr)$ denotes the odd
part of $v$ with respect to $x=\tfrac12\,$.

\medskip
\noindent
The second term involves the shifted sine basis and the transform $S_{0,1}$:
\[
\widetilde A_{1,n}(v)
=
-\frac{2}{\pi}\int_0^1
\sin\!\bigl((2n+1)\pi x\bigr)\,(S_{0,1}[v])(x)\,dx\,.
\]
Since $\sin((2n+1)\pi x)$ is even with respect to $x=\tfrac12\,$, only the even
part of $S_{0,1}[v]$ contributes. Including the mode $n=0\,$, Parseval's identity
yields
\[
\sum_{n\ge0} |\widetilde A_{1,n}(v)|^2
=
\frac{2}{\pi^2}\,\|(S_{0,1}[v])_{\mathrm{even}}\|_{L^2(0,1)}^2\,,
\]
where
\[
(S_{0,1}[v])_{\mathrm{even}}(x)
:=
\tfrac12\bigl((S_{0,1}[v])(x)+(S_{0,1}[v])(1-x)\bigr)\,.
\]
If we start the sum at $n=1$, we subtract the contribution of the mode
$\sin(\pi x)$, namely
\[
|\widetilde A_{1,0}(v)|^2
=
\frac{2}{\pi^2}\,
\bigl|\langle (S_{0,1}[v])_{\mathrm{even}},\,\sin(\pi x)\rangle_{L^2(0,1)}\bigr|^2\,.
\]
Let $P$ denote the orthogonal projection onto the subspace
$L^2_{\mathrm{even}}(0,1)$ (with respect to $x=\tfrac12$). Then
\[
(S_{0,1}[v])_{\mathrm{even}} = P S_{0,1}[v]\,.
\]
Therefore, starting the sum at $n=1$, Parseval's identity yields
\[
\sum_{n\ge1} |\widetilde A_{1,n}(v)|^2
=
\frac{2}{\pi^2}\,\|(S_{0,1}[v])_{\mathrm{even}} \|_{L^2(0,1)}^2
-
\frac{2}{\pi^2}\,
\bigl|\langle P S_{0,1}[v],\,\sin(\pi x)\rangle_{L^2(0,1)}\bigr|^2\,.
\]
We may rewrite the scalar product using the adjoint of $P S_{0,1}$:
\[
\langle P S_{0,1}[v],\,\sin(\pi x)\rangle_{L^2(0,1)}
=
\langle v,\,(P S_{0,1})^{*}\sin(\pi x)\rangle_{L^2(0,1)}\,.
\]
Hence
\[
\sum_{n\ge1} |\widetilde A_{1,n}(v)|^2
=
\frac{2}{\pi^2}\,\|(S_{0,1}[v])_{\mathrm{even}}\|_{L^2(0,1)}^2
-
\frac{2}{\pi^2}\,
\bigl|\langle v,\,(P S_{0,1})^{*}\sin(\pi x)\rangle_{L^2(0,1)}\bigr|^2\,.
\]

\medskip
\noindent
Combining both contributions, we obtain
\[
\|\mathcal A^{(0)}_{0,1}v\|_{\ell^2(\N^*)\times\ell^2(\N^*}^2
=
\frac{2}{\pi^2}
\Big(
\|v_{\mathrm{odd}}\|_{L^2(0,1)}^2
+
\|(S_{0,1}[v])_{\mathrm{even}}\|_{L^2(0,1)}^2
-
\bigl|\langle v,\,(P S_{0,1})^{*}\sin(\pi x)\rangle_{L^2(0,1)}\bigr|^2
\Big)\,.
\]

\medskip
\noindent
In order to exploit this identity, we focus on the even component of
$S_{0,1}[v]$ and introduce the corresponding projected transform.

\medskip
\noindent
We recall that the integral operator $S_{0,1}$ is defined by
\begin{equation}\label{eq:S01-def}
(S_{0,1}[v])(x)
:=
v(x)-2\int_x^1 \frac{v(t)}{t}\,dt\,,
\qquad x\in(0,1)\,,
\end{equation}
and we define
\[
T:=P\circ S_{0,1}
:\;
L^2_{\mathrm{even}}(0,1)
\longrightarrow
L^2_{\mathrm{even}}(0,1)\,.
\]

\begin{lemma}[A bounded left inverse for \(T\)]
\label{lem:left-inverse-PS01}
Define the operator \(L:L^2_{\mathrm{even}}(0,1)\to L^2_{\mathrm{even}}(0,1)\) by
\begin{equation}\label{eq:L-def}
(Lg)(x)
=
g(x)
-\frac{1}{x(1-x)}\int_0^x (1-2t)\,g(t)\,dt,
\qquad 0<x<1.
\end{equation}
Then:
\begin{enumerate}
\item \(L\) is a left inverse of \(T:=P\circ S_{0,1}\) on \(L^2_{\mathrm{even}}(0,1)\), i.e.
\begin{equation}\label{eq:left-inverse-identity}
L\,T v=v,
\qquad \forall v\in L^2_{\mathrm{even}}(0,1).
\end{equation}
\item The operator \(L\) is bounded on \(L^2_{\mathrm{even}}(0,1)\). More precisely,
\[
\|Lg\|_{L^2(0,1)}
\le
5\,\|g\|_{L^2(0,1)},
\qquad
\forall\, g\in L^2_{\mathrm{even}}(0,1).
\]
\end{enumerate}
\end{lemma}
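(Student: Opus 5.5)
The plan is to compute $T$ explicitly on even functions, observe that $g=Tv$ satisfies a simple first-order differential relation, and integrate that relation. This yields the formula for $L$; boundedness then follows from Hardy's inequality together with the parity structure.

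\emph{Step 1: explicit form of $T$.} Let $v\in L^2_{\mathrm{even}}(0,1)$. By definition,
\[
(PS_{0,1}v)(x)=v(x)-\int_x^1\frac{v(t)}{t}\,dt-\int_{1-x}^1\frac{v(t)}{t}\,dt .
\]
The substitution $t=1-s$ together with $v(1-s)=v(s)$ turns the last integral into $\int_0^x \frac{v(s)}{1-s}\,ds$. Hence
\[
g:=Tv=v-\int_x^1\frac{v(t)}{t}\,dt-\int_0^x\frac{v(s)}{1-s}\,ds .
\]
Differentiating in the sense of distributions gives
\[
g'=v'+\frac{(1-2x)}{x(1-x)}\,v,
\]
that is,
\[
x(1-x)\,g'=\bigl(x(1-x)\,v\bigr)'.
\]

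\emph{Step 2: integrate to get $L$.} Using $x(1-x)g'=(x(1-x)g)'-(1-2x)g$, set
\[
h(x):=x(1-x)\bigl(v(x)-g(x)\bigr)+\int_0^x(1-2t)\,g(t)\,dt .
\]
Then $h'=0$ in $\mathcal D'(0,1)$, so $h\equiv c$ is constant. The step I expect to need care is showing $c=0$, because no pointwise information on $v$ at $0$ is available.

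To handle it, note that $v-g=\int_x^1 v/t+\int_0^x v/(1-s)$ lies in $L^2(0,1)$, by Hardy's inequality or equivalently the boundedness of $S_{0,1}$ from Lemma~\ref{akns_lemme_reduction_indice}(iii). Also $\int_0^x(1-2t)g=O(\sqrt{x})$. Consequently
\[
\frac{c}{x(1-x)}=v-g+\frac{1}{x(1-x)}\int_0^x(1-2t)\,g
\]
would be square integrable near $0$ only if $c=0$, since the last term is $O(x^{-1/2})$ and the left side would otherwise behave like $c/x$. With $c=0$ we obtain $v=Lg$, which proves \eqref{eq:left-inverse-identity}.

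\emph{Step 3: boundedness.} First, $L$ preserves evenness. The function $(1-2t)g(t)$ is odd about $\tfrac12$, so for $x>\tfrac12$,
\[
\int_0^x(1-2t)\,g=\int_0^{1-x}(1-2t)\,g ,
\]
and the prefactor $\frac{1}{x(1-x)}$ is symmetric. Thus $Lg$ is even, and it suffices to estimate $Lg$ on $(0,\tfrac12)$.

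On $(0,\tfrac12)$ we have $\frac{1}{x(1-x)}\le\frac{2}{x}$ and $|1-2t|\le1$. The classical Hardy inequality $\bigl\|\tfrac1x\int_0^x|g|\bigr\|_{L^2(0,1/2)}\le 2\|g\|_{L^2(0,1/2)}$ then bounds the integral term by $4\|g\|_{L^2(0,1/2)}$. Adding the identity term gives
\[
\|Lg\|_{L^2(0,1/2)}\le 5\,\|g\|_{L^2(0,1/2)} .
\]
By evenness the same bound holds on $(0,1)$, which gives the constant $5$ in the statement.
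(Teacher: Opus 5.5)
Your proof is correct and is essentially the paper's: the same reflected formula for $Tv$, the same identity $\bigl(x(1-x)v\bigr)'=x(1-x)g'$, and the same evenness reduction plus Hardy estimate on $(0,\tfrac12)$ giving the constant $5$. The only difference is that the paper reduces to $v\in C^1([0,1])$ by density and integrates from $0$, whereas you argue distributionally and show the integration constant vanishes by integrability at $0$. That step works because $v-g=\frac{c}{x(1-x)}+O(x^{-1/2})$ with $v-g\in L^2$ and $O(x^{-1/2})=o(x^{-1})$; note that the bound $O(x^{-1/2})$ alone does not make that term square integrable, but Hardy's inequality does.
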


\begin{proof}

\medskip
\noindent\emph{Step 1: derivation of the formula.}
By density, we may assume without loss of generality that $v\in C^1([0,1])$.
We recall that $v$ is even and set $g:=Tv=P S_{0,1}[v]$.
Using the definition of $S_{0,1}$ together with the symmetry $v(1-x)=v(x)$,
one obtains for $x\in(0,1)$
\[
g(x)=v(x)-\int_x^1 \frac{v(t)}{t}\,dt-\int_0^x \frac{v(t)}{1-t}\,dt.
\]
Differentiating gives
\[
g'(x)=v'(x)+\frac{v(x)}{x}-\frac{v(x)}{1-x}
=v'(x)+\frac{1-2x}{x(1-x)}\,v(x),
\]
hence
\[
x(1-x)v'(x)+(1-2x)v(x)=x(1-x)g'(x),
\qquad\text{i.e.}\qquad
\bigl(x(1-x)v(x)\bigr)' = x(1-x)g'(x).
\]
Integrating from $0$ to $x$ and performing one integration by parts yields
\[
x(1-x)v(x)=x(1-x)g(x)-\int_0^x (1-2t)\,g(t)\,dt,
\]
that is,
\[
v(x)=g(x)-\frac{1}{x(1-x)}\int_0^x (1-2t)\,g(t)\,dt,
\qquad 0<x<1.
\]
Since $g$ is even with respect to $\tfrac12$, the right-hand side is also even,
hence $v$ is even as well. This is precisely the formula defining the left inverse
$L$.

\medskip
\noindent\emph{Step~2: boundedness on $L^2_{\mathrm{even}}(0,1)$.}
Since $Lg$ is even with respect to $\tfrac12$, it suffices to work on $(0,\tfrac12)$:
\[
\|Lg\|_{L^2(0,1)}^2=2\|Lg\|_{L^2(0,1/2)}^2.
\]
For $0<x\le\tfrac12$,
\[
(Lg)(x)=g(x)-\frac{1}{x(1-x)}\int_0^x (1-2t)\,g(t)\,dt.
\]
Using $|1-2t|\le1$ and $1-x\ge\tfrac12$, we obtain
\[
|(Lg)(x)|\le |g(x)|+\frac{2}{x}\int_0^x|g(t)|\,dt.
\]
By Hardy's inequality on $(0,\tfrac12)$,
\[
\int_0^{1/2}\!\left(\frac{1}{x}\int_0^x |g(t)|\,dt\right)^2\!dx
\le 4\int_0^{1/2}\!|g(x)|^2dx,
\]
hence
\[
\|Lg\|_{L^2(0,1/2)}
\le \|g\|_{L^2(0,1/2)}+4\|g\|_{L^2(0,1/2)}
=5\|g\|_{L^2(0,1/2)}.
\]
Therefore $\|Lg\|_{L^2(0,1)}\le 5\|g\|_{L^2(0,1)}$, and $L$ is bounded on
$L^2_{\mathrm{even}}(0,1)$.
\end{proof}

\medskip
\noindent
We now show that the two nonnegative contributions in the right--hand side
of the following identity already control the full $L^2$--norm of $v$:
\begin{equation}\label{eq:A0-identity-missing-mode}
\|\mathcal A^{(0)}_{0,1}v\|_{\ell^2(\N^*)\times\ell^2(\N^*)}^2
=
\frac{2}{\pi^2}
\Big(
\|v_{\mathrm{odd}}\|_{L^2(0,1)}^2
+
\|(S_{0,1}[v])_{\mathrm{even}}\|_{L^2(0,1)}^2
-
|\langle v,w\rangle_{L^2(0,1)}|^2
\Big)\,,
\end{equation}
where
\[
w:=(P S_{0,1})^{*}\sin(\pi x)\in L^2(0,1)\,.
\]

\medskip
\noindent
We first prove that
\begin{equation}\label{eq:coercive-two}
\|v\|_{L^2(0,1)}^2
\;\lesssim\;
\|v_{\mathrm{odd}}\|_{L^2(0,1)}^2
+
\|(S_{0,1}[v])_{\mathrm{even}}\|_{L^2(0,1)}^2\, .
\end{equation}
Since $v=v_{\mathrm{odd}}+v_{\mathrm{even}}$, we have
\[
(S_{0,1}[v])_{\mathrm{even}}
=
(P\circ S_{0,1})v
=
(P\circ S_{0,1})v_{\mathrm{even}}
+
(P\circ S_{0,1})v_{\mathrm{odd}}\,.
\]
We recall that $T=P\circ S_{0,1}$ on $L^2_{\mathrm{even}}(0,1)$. Then
\[
T v_{\mathrm{even}}
= (P\circ S_{0,1})v_{\mathrm{even}}=
(S_{0,1}[v])_{\mathrm{even}}
-
(P\circ S_{0,1})v_{\mathrm{odd}}\,.
\]
Since $L$ is a bounded left inverse of $T$ on $L^2_{\mathrm{even}}(0,1)$, we obtain
\[
v_{\mathrm{even}}
=
L\Bigl((S_{0,1}[v])_{\mathrm{even}}
-
(P\circ S_{0,1})v_{\mathrm{odd}}\Bigr)\,,
\]
and therefore, since $P\circ S_{0,1}$ is bounded on $L^2(0,1)\,$,
\[
\|v_{\mathrm{even}}\|_{L^2(0,1)}
\le
C\Bigl(
\|(S_{0,1}[v])_{\mathrm{even}}\|_{L^2(0,1)}
+
\|v_{\mathrm{odd}}\|_{L^2(0,1)}
\Bigr)\,.
\]
Squaring and adding the odd part yields \eqref{eq:coercive-two}. Combining \eqref{eq:A0-identity-missing-mode} and \eqref{eq:coercive-two}
yields
\[
\|v\|_{L^2(0,1)}^2
\;\lesssim\;
\|\mathcal A^{(0)}_{0,1}v\|_{\ell^2(\N^*)\times\ell^2(\N^*)}^2
+
|\langle v,w\rangle_{L^2(0,1)}|^2\,.
\]
Equivalently,
\[
\|v\|_{L^2(0,1)}
\;\lesssim\;
\Big\|\bigl(\mathcal A^{(0)}_{0,1}v,\;\langle v,w\rangle_{L^2(0,1)}\bigr)\Big\|\,.
\]

\medskip
\noindent
In particular, the augmented operator 
\[
\widetilde{\mathcal A}
:=
\bigl(\mathcal A^{(0)}_{0,1},\,\langle \,\cdot\,,w\rangle_{L^2(0,1)}\bigr)
:\;L^2(0,1)\to (\ell^2(\N^*)\times\ell^2(\N^*))\times\mathbb R
\]
is injective and has closed range. Since $\langle \cdot,w\rangle$ is a
rank--one operator, $\widetilde{\mathcal A}$ is a finite--rank extension of
$\mathcal A^{(0)}_{0,1}$. Hence $\mathcal A^{(0)}_{0,1}$ is semi--Fredholm:
its kernel is at most one--dimensional and its range is closed.
This follows, for instance, from~\cite[Proposition~11.4]{Br11}.

\medskip\noindent
Now, we recall below the following
local injectivity result, which is a direct consequence of the mean value theorem
and the open mapping theorem (see, for instance,~\cite{Abraham88}, Theorem 2.5.10).

\begin{prop}[Local injectivity]\label{prop:local-injectivity}
	Let $X$ and $Y$ be Banach spaces, and let
	\[
	\mathcal{S} : U \subset X \longrightarrow Y
	\]
	be a $\mathcal C^1$ map defined on an open neighborhood $U$ of a point
	$x_0 \in X$.
	Assume that the Fr\'echet differential $d_{x_0}\mathcal{S} : X \to Y$
	is injective and has closed range.
	Then there exists a neighborhood $V \subset U$ of $x_0$ such that
	$\mathcal{S}$ is injective on $V$.
\end{prop}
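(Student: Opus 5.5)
The plan is to show that the differential is bounded below on all of $X$, and then to use the mean value inequality to pass from this linear lower bound to injectivity of $\mathcal S$ near $x_0$.

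\emph{Step 1: a linear lower bound.} Set $L:=d_{x_0}\mathcal S$. Since $L$ is injective and $\operatorname{Ran}L$ is closed in $Y$, the range is itself a Banach space. Hence $L:X\to\operatorname{Ran}L$ is a continuous bijection between Banach spaces. By the open mapping theorem (bounded inverse theorem) its inverse is bounded. So there is $c>0$ with
\[
\|Lh\|_Y\ge c\,\|h\|_X\qquad\text{for all } h\in X .
\]

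\emph{Step 2: choosing the neighborhood.} Because $\mathcal S$ is $\mathcal C^1$, the map $x\mapsto d_x\mathcal S$ is continuous into $\mathcal L(X,Y)$. We choose a ball $V=B(x_0,r)\subset U$ on which
\[
\|d_x\mathcal S-L\|_{\mathcal L(X,Y)}\le \tfrac c2 .
\]
The ball is convex, so segments between points of $V$ stay in $V$.

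\emph{Step 3: the mean value estimate.} Take $x,y\in V$ and define $\Phi(z):=\mathcal S(z)-Lz$. Then $d_z\Phi=d_z\mathcal S-L$ has norm at most $c/2$ on $V$. The mean value inequality along the segment $[x,y]$ gives
\[
\|\Phi(x)-\Phi(y)\|\le \tfrac c2\|x-y\| .
\]
We write $\mathcal S(x)-\mathcal S(y)=L(x-y)+\Phi(x)-\Phi(y)$. Combining this with Step 1 yields
\[
\|\mathcal S(x)-\mathcal S(y)\|\ge c\|x-y\|-\tfrac c2\|x-y\|=\tfrac c2\|x-y\| .
\]
Hence $\mathcal S(x)=\mathcal S(y)$ forces $x=y$, so $\mathcal S$ is injective on $V$. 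The same inequality also gives a Lipschitz inverse on $\mathcal S(V)$.

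\emph{Main obstacle.} The only non-trivial point is Step 1. It is where closedness of the range is essential: without it, injectivity alone gives no uniform lower bound, which is exactly why the pair $(0,2)$ stays open. Once the bound is available, Steps 2 and 3 are routine. In the application, we will take $X=L^2(0,1)^2$, $Y=\ell^2_{\mathbb R}(\mathbb Z)^2$ and $x_0=0$. The $\mathcal C^1$ regularity of $\mathcal S_{\kappa_1,\kappa_2}$ follows from Serier's real-analyticity result, and the hypotheses on the differential are supplied by Theorem~\ref{thm:differential-behavior}.
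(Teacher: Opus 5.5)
Your proof is correct and follows the route the paper indicates. The paper gives no written argument; it only invokes the open mapping theorem and the mean value theorem, citing Abraham--Marsden--Ratiu. Your bounded-inverse lower bound $\|d_{x_0}\mathcal S\,h\|\ge c\|h\|$, combined with continuity of $x\mapsto d_x\mathcal S$ and the mean value inequality on a small convex ball, giving $\|\mathcal S(x)-\mathcal S(y)\|\ge \tfrac c2\|x-y\|$, is exactly that argument written out in full.
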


\vspace{0.2cm}\noindent
Theorem~\ref{thm:main-AKNS-01} is a direct consequence of
Proposition~\ref{prop:local-injectivity},
Theorems~\ref{thm:injective-01a}, \ref{thm:injective-01c} and \ref{thm:noninj-03}, and the closedness of the range of $S$.

\begin{rem}[Other effective angular momenta]
The case $(\kappa_1,\kappa_2)=(0,2)$ is more delicate. Indeed, the asymptotics of
the corresponding Bessel zeros do not produce the half--integer phase shift
that yields the interlaced frequencies appearing in the case $(0,1)$.
As a consequence, the associated trigonometric system is no longer complete:
one only obtains a partial family (either sine or cosine), rather than a full
sine--cosine system. In particular, the argument based on the coercive identity for the
trigonometric model cannot be applied directly, since the missing family
prevents a direct control of the whole $L^2$--norm. A refined analysis is then
required to recover closed range in this case.
\end{rem}

\appendix
\section{Physical interpretation of the model: from radial Dirac operators to AKNS systems}
The AKNS  system appears in many models in Physics. We have selected below two models where the results established in the main text are relevant. This also suggests the consideration of many other questions.

\subsection{Dirac in 3D}
Following \cite{Th92} (see also \cite{AHM-1} and \cite{Serier2006}), we recall that
the MIT realization of the Dirac operator on $L^2(\mathcal B,\mathbb C^4)$ 
($\mathcal B$ is the unit ball of $\R^3$) with a radial matrix potential
\begin{subequations}
\begin{equation}
V(x):= \phi_{el}(r)  I_4 + \phi_{sc} (r) {\bf \beta} + i {\bf \beta} {\bf \alpha}\cdot e_r \phi_{am} (r)\,,
\end{equation}
where
\begin{equation}
{\bf  \beta}= \left(\begin{array}{ll} I_2& 0\\0&-I_2\end{array}\right)\,,\,
\alpha_i = \left(\begin{array}{ll} 0 & \sigma_i\\\sigma_i&0\end{array}\right)\,,{\bf \alpha} =(\alpha_1,\alpha_2,\alpha_3)\,,
\end{equation}
\begin{equation}
\sigma_1= \left(\begin{array}{ll} 0 & 1\\ 1&0\end{array}\right)\,,\,
\sigma_2= \left(\begin{array}{ll} 0 & -i\\i &0\end{array}\right)\,,\,
\sigma_3= \left(\begin{array}{ll} 1 & 0\\ 0 & -1\end{array}\right)\,,\,
\end{equation}
the $\sigma_i$ are the Pauli matrices,
\begin{equation}
e_r:= {\bf x}/r\,,
\end{equation}
and 
$\phi_{el}\,$,$\phi_{sc}$ and $\phi_{am}$ are radial potentials with a physical interpretation.
\end{subequations}\\
Although the case $\phi_{el}$ is interesting (one can find in \cite{Th92} the analysis of the Coulomb case), we are concerned in this article with the case when $\phi_{el}=0$\,, and use in the main text the notation 
$\phi_{sc}=p$ and $\phi_{am}=q\,$. Notice that, when $\phi_{el}$ is not $0$, it is known from \cite{lesa-dirac} (see also the discussion in the introduction in \cite{AHM}) that the inverse problem is ill posed
 for the AKNS system already when $\kappa=0\,$. Theorem 4.14 in \cite{Th92} states that
the Dirac operator 
\begin{subequations}\label{defDir}
\begin{equation}
\mathbb D_V:= \mathbb D_0 + V\,,
\end{equation}
with ($m$ being the mass)
\begin{equation}
\mathbb D_0 = \sum_i \alpha_i D_{x_i} + {\bf \beta} m 
\end{equation}
\end{subequations}
is unitary equivalent to the direct sum of the so-called "partial wave" Dirac operators $h_{m_j,\kappa_j}$
$$
\bigoplus_{j=\frac 12,\frac 32,\cdots}^{+\infty} \quad \bigoplus_{m_j=-j}^j  \quad \bigoplus _{\kappa_j=\pm (j+\frac 12)} h_{m_j,\kappa_j}
$$
where, in the basis $\{\Phi^{+}_{m_j,\kappa_j}, \Phi^{-}_{m_j,\kappa_j}\}\,$ (see (4.110)-(4.116) in \cite{Th92}),
$h_{m_j,\kappa_j}$ is the operator $H_{\kappa_j}$ with a suitable boundary condition at $r=1\,$.

\noindent
Notice that in this decomposition we only meet (up to unitary equivalence) the Dirac operators $H_\kappa$ on $L^2(0,1)$ for $\kappa \in \mathbb Z \setminus\{0\}\,$.
Here $\mathbb Z \setminus\{0\}$ is interpreted as the eigenvalues of some selfadjoint operator $K$
on $L^2(S^2,\mathbb C^4)\,$, where $S^2$ is the two dimensional unit sphere in $\mathbb R^3$.
We emphasize that $\kappa$ is not the  angular momentum as sometimes wrongly written
(for example in \cite{AHM}).

\noindent
Notice also that in the  Subsection 4.6.6 in  \cite{Th92}  only the case in  $(0,+\infty)$ is considered but this does not change the "tangential" decomposition of $L^2(S^2,\mathbb C^4)$. \\ 

\noindent
 Hence we have to analyze more carefully the possible boundary conditions by coming back to the problem for the unit ball in $\mathbb R^3$. 
According to \cite{AMSV},
the generalized MIT condition in a domain $\Omega$ is given by
$$
\varphi =\frac i2 (\lambda_e-\lambda_s \beta) ({\bf \alpha}\cdot \nu) \varphi \mbox{ on } \partial \Omega\,,
$$
with 
$$
\lambda_e^2-\lambda_s^2=-4\,.
$$
Notice that the standard MIT model corresponds with $\lambda_e=0$, $\lambda_s=\pm 2\,$.\\

\noindent
In the case of the ball and for the standard case, we get
$$
\varphi =-i {\bf \beta} ({\bf \alpha}\cdot e_r) \varphi \mbox{ on } S^2\,.
$$
Using Lemma 4.13 in \cite{Th92}, the operators ${\bf \beta}$ and ${\bf \alpha}\cdot e_r$  respect the decomposition and, with respect to the basis 
$\{\Phi^{+}_{m_j,\kappa_j}, \Phi^{-}_{m_j,\kappa_j}\}$,  are represented by the $2\times 2$ matrices
$$
\beta_{m_j,\kappa_j}=\sigma_3\quad \text{and} \quad  (-i{\bf \alpha}\cdot e_r)= -i \sigma_2\,.
$$

\noindent
The boundary condition consequently reads
$$
(f^+,f^-)^{T} = \sigma_1  (f^+,f^-)^{T}\,,\mbox{ for } r=1\,,
$$
or
$$
f^+(1)+ f^{-} (1)=0\,.
$$
This corresponds in the AKNS notation to $\beta=\frac \pi 4$.\\

\noindent
Let us consider now the general MIT condition. We get
$$
(f^+,f^-)^{T} = \frac 12 \left( \begin{array}{ll} 0& \lambda_e - \lambda_s\\ \lambda_s +  \lambda_e & 0 \end{array} \right) (f^+,f^-)^{T}\,,\mbox{ for } r=1\,,
$$
which reads
$$
f^+(1)= \frac 12 (\lambda_e- \lambda_s)f^{-} (1)\,.
$$
If we take $\lambda_s$ and $\lambda_e$ of opposite sign and take $\lambda_e \rightarrow +\infty\,$, we get at the limit
$$
f^- (1)=0\,,
$$
which corresponds in the AKNS formalism to $\theta_2=0$. This limit is analyzed in \cite{AMSV} and this justifies to consider this limiting case also called Zig-Zag model.

\noindent
More directly, this model  is analyzed in \cite{Ho} who refers to \cite{Sch}. Other properties 
for the radial Dirac operator are considered in \cite{ALMR,GL}.

\subsection{Dirac in 2D with Aharonov-Bohm potential}
 
It is natural to consider the same problem in dimension $2$.  Here we refer to another section in \cite{Th92} or to \cite{AMSV}. Here we naturally get 
an AKNS family with $\kappa \in \tfrac{1}{2} + \mathbb{Z}$. In this case, the Dirac operator is a $2\times 2$ system. The free Dirac operator
 reads 
 $$
 \mathbb D_0 = \sigma_1 D_{x_1} + \sigma_2 D_{x_2}\,,
 $$
 and we can add a potential in the form
 $$
 V =\left(\begin{array}{ll} -q&p\\p&q\end{array}\right)\,,
 $$
 as it appears in the AKNS system. \\
 The description of the decomposition in the radial case is simpler than in the $3D$ case and we have just to consider the polar coordinates. This is precisely described in Thaller's book (\cite{Th92}, Subsection 7.3.3) 
  but we have to explain two points which are not present there. 
  First, since we are interested in the case of the disk, we have to describe what would be the boundary condition. This is for example discussed for general domains with $C^2$ boundary in \cite{BFSV} (see also references therein), the simplest  conditions becoming simply (Zig-Zag model):
  $$
  (\gamma v_1)_{|\partial \Omega} = 0\,,
  $$
  or
  $$
  (\gamma v_2)_{|\partial \Omega} = 0\,,
  $$
  where $\gamma$ denotes the trace operator.\\
  In the reduction using the decomposition in \cite{Th92} we get the boundary condition $Y_2(0)=0\,$.  Other conditions could be discussed. According to Lemma 2.3 in \cite{BFSV}, the general condition reads
  $$
  (\gamma v_2)_{|\partial \Omega} = \frac{1-\sin \eta}{\cos \eta} t(s) (\gamma v_2)_{|\partial \Omega}\,.
  $$
  (where, for $s\in \partial \Omega$, $t(s):=t_1(s)+ i t_2(s)$, 
   $(t_1(s),t_2(s)$ is the tangent vector to $\partial \Omega$ at $s$,  see p.2, line -3 in \cite{BFSV}).
  In Theorem 1.1 in \cite{BFSV}, it is assumed that  (see Remark 2)  $\cos \eta \neq 0$ for having a regular self-adjoint problem with compact resolvent. Nevertheless in the Zig-Zag case, one can also define a natural selfadjoint extension. $0$ seems to belong to the essential spectrum. The results are described in the recent paper \cite{DMS} which refers to a paper by K. Schmidt \cite{Sch}.
  The corresponding family of the AKNS operators is indexed by $\kappa =\pm (1/2, 3/2,\cdots,)$  with boundary condition at $r=1$ given by $\theta_2=0\,$.\\
  Unfortunately, we do not know how to treat this problem when the $\kappa$ are not in $\mathbb Z$.\\
  As already observed in \cite{Th92}, one can perform the same decomposition in the case when the magnetic potential $A_\phi(r)e_\phi$ (with $e_\phi= \frac 1r (-x_2,x_1)$) corresponds to a radial magnetic field $B(r)$. The decomposition leads simply to replace in the definition of the AKNS system $\frac{d}{dr} - \frac{\kappa}{r}$ by
  $\frac{d}{dr} - \frac{\kappa}{r}+A_\phi(r)$ (see Formula (7.103) in \cite{Th92}). \\
  We want to consider $A_\phi(r) =\frac{\alpha}{r}\, $. The formal part of the decomposition still works but the regularity assumption done in \cite{Th92} is not satisfied since the corresponding magnetic field is 
  $2\pi \alpha \delta_0$ where $\delta_0$ denotes the Dirac measure at the origin. As usual we can reduce the analysis to $\alpha\in [0,1)\,$.  The case $\alpha=0$ being the previously discussed case without magnetic potential, it remains to consider $\alpha \in (0,1)$.
  Hence we have to define the domain of this magnetic  Dirac operator
   in this so called Aharonov-Bohm situation. This is fortunately discussed
   in the literature (\cite{Pe,Ta}). The authors classify in the case of $\mathbb R^2$  all the possible
    selfadjoint extensions of the minimal realization starting from $C_0^\infty(\mathbb R^2\setminus \{0\};\mathbb C^2)$. As described in \cite{Ta}, we choose the condition corresponding  to the parameter $\zeta=0$ and (taking also account of the boundary condition, which is not present in Tamura's paper \cite{Ta}) the domain is
    $$
    D(\mathbb D_{\alpha,V}):= \{u= (u_1,u_2)\in L^2(\Omega)^2, D_\alpha u \in L^2(\Omega)^2\,,\, \lim_{|x|\rightarrow 0} |x|^{1-\alpha} e^{-i\theta} u_2(x)=0\,,\,  (\gamma u_2)_{\partial \Omega} = 0\}\,.
    $$
    In the case of the unit disk $\Omega =B^1$\,, we get the AKNS system in $(0,1)$ with
    $\kappa$ replaced by $\kappa_\alpha=\kappa +\alpha\,$.\\
    When $\alpha =\frac 12$ we get a sequence of integers in $\mathbb Z$
     for which the analysis of the main text is relevant.
 
  \subsection{Open problems}
  Notice that more generally, it is interesting to consider the AKNS systems
   without to assume that $\kappa$ is an integer and with any boundary condition at the origin (for the relevant $\kappa$) and at $r=1$\,.\\
  In view of the application to the two-dimensional Dirac operator,
we note in particular that Theorem~\ref{thm:muntz-AKNS} remains valid
even when the parameter $\kappa_k$ is not assumed to be an integer. 
   It could also be interesting to look at the case with a mass $m \neq 0$\,. At the level of the AKNS system this seems to correspond  to the study of  a model where the variation of $\phi_{el}$ is considered and the other potentials are $0$. In this direction, we refer to \cite{Kiss}, where an Ambarzumian-type theorem is established for Dirac operators. This result provides a uniqueness statement at the unperturbed point, showing that the vanishing of the potential $\phi_{el}$ is uniquely determined by the corresponding spectral data.

\noindent
Finally, in light of \cite{AHM}, it is natural to investigate the corresponding
Schr\"odinger problems with Robin boundary conditions.
This stems from the structural link between Dirac and Schr\"odinger frameworks:
in the Dirac setting introduced in \cite{AHM}, when the scalar potential $\phi_{sc}=0$,
the system reduces to a second-order Schr\"odinger (Bessel-type) equation,
and the boundary conditions naturally translate into Robin-type
conditions for the associated Schr\"odinger operator.

	\vspace{0.5cm}
	
	\noindent \footnotesize{
		
		\noindent Laboratoire de Math\'ematiques Jean Leray, UMR CNRS 6629. Nantes Universit\'e  F-44000 Nantes  \\
		\emph{Email adress}: damien.gobin@univ-nantes.fr \\
		
		\noindent Laboratoire de Math\'ematiques Jean Leray, UMR CNRS 6629. Nantes Universit\'e  F-44000 Nantes  \\
		\emph{Email adress}: benoit.grebert@univ-nantes.fr \\
		
		\noindent Laboratoire de Math\'ematiques Jean Leray, UMR CNRS 6629. Nantes Universit\'e  F-44000 Nantes  \\
		\emph{Email adress}: Bernard.Helffer@univ-nantes.fr \\
		
		\noindent Laboratoire de Math\'ematiques Jean Leray, UMR CNRS 6629. Nantes Universit\'e  F-44000 Nantes \\
		\emph{Email adress}: francois.nicoleau@univ-nantes.fr \\
	}
	
\end{document}